\newtheorem{proposition}{Proposition}
\newtheorem{theorem}[proposition]{Theorem}
\newtheorem{lemma}[proposition]{Lemma}
\newtheorem{corollary}[proposition]{Corollary}
\theoremstyle{definition}
\newtheorem{definition}[proposition]{Definition}
\theoremstyle{remark}
\newtheorem{example}[proposition]{Example}
\numberwithin{proposition}{section}
\def\ur{\leq}
\def\ll{\geq}
\def\alg{\mathcal{A}}
\def\Ahat{\hat{\mathbf{A}}}
\def\Acheck{\check{\mathbf{A}}}
\def\ahat{\hat{a}}
\def\acheck{\check{a}}
\def\A{\mathbf{A}}
\def\B{\mathbf{B}}
\def\C{\mathbf{C}}
\def\D{\mathbf{D}}
\def\E{\mathbf{E}}
\def\F{\mathbf{F}}
\def\Lam{\mathbf{\Lambda}}
\def\Del{\mathbf{\Delta}}
\def\Th{\mathbf{\Theta}}
\def\I{\mathcal{I}}
\def\Z{\mathbb{Z}}
\def\R{\mathbb{R}}
\def\CC{\mathbb{C}}
\def\d{\partial}
\def\CThat{\widehat{\mathit{CT}}}
\def\CTminus{\mathit{CT}^{-}}
\def\CTtildeminus{\widetilde{\mathit{CT}}^{-}}
\def\CTdoublehat{\widehat{\widehat{\mathit{CT}}}}
\def\ddoublehat{\widehat{\widehat{\partial}}}
\def\dminus{\partial^-}
\def\dhat{\widehat{\partial}}
\def\dinfty{\partial^{\infty}}
\def\HThat{\widehat{\mathit{HT}}}
\def\HTdoublehat{\widehat{\widehat{\mathit{HT}}}}
\def\HTminus{\mathit{HT}^{-}}
\def\CTinfty{\mathit{CT}^{\infty}}
\def\HTinfty{\mathit{HT}^{\infty}}
\def\Aut{\operatorname{Aut}}
\def\Phil{\mathbf{\Phi}^L}
\def\Phir{\mathbf{\Phi}^R}
\def\ext{\text{ext}}
\def\Mat{\operatorname{Mat}}
\def\sl{\operatorname{sl}}
\def\diag{\operatorname{diag}}
\def\Aug{\operatorname{Aug}}
\def\op{\operatorname{op}}
\def\mir{\operatorname{mir}}
\def\kk{\mathbf{k}}
\def\tA{\tilde{\A}}
\def\tB{\tilde{\B}}
\def\tD{\tilde{\D}}
\def\tF{\tilde{\F}}
\def\ta{\tilde{a}}
\def\tb{\tilde{b}}
\def\td{\tilde{\partial}}
\def\tPhil{\tilde{\mathbf{\Phi}}^L}
\def\tPhir{\tilde{\mathbf{\Phi}}^R}
\def\tAU{\tilde{\A}^U}
\def\tAV{\tilde{\A}^V}
\def\tBU{\tilde{\B}^U}
\def\tBV{\tilde{\B}^V}
\newcommand{\rb}[1]{\raisebox{0.15in}{#1}}
\theoremstyle{plain}
\begin{document}

\title{Combinatorial Knot Contact Homology and
  Transverse Knots}
\author[L. Ng]{Lenhard Ng}
\address{Mathematics Department, Duke University, Durham, NC 27708}
\email{ng@math.duke.edu}
\urladdr{http://www.math.duke.edu/~{}ng/}

\begin{abstract}
We give a combinatorial treatment of transverse homology, a new
invariant of transverse knots that is an extension of knot contact
homology. The theory comes in several flavors, including one that is
an invariant of topological knots and produces a three-variable knot
polynomial related to the $A$-polynomial.
We provide a number of computations of transverse homology
that demonstrate its effectiveness in distinguishing transverse knots,
including knots that cannot be distinguished by the Heegaard Floer
transverse invariants or other previous invariants.
\end{abstract}

\maketitle

\section{Introduction and Results}
\label{sec:intro}

In this paper, we extend the combinatorial form for knot contact
homology introduced in \cite{NgKCH1,NgKCH2,Ngframed} to an invariant
of knots in $\R^3$ that are transverse to the standard
contact structure $\ker(dz-y\,dx)$. This new invariant, which we call \textit{transverse homology}, comes in several flavors. One version, \textit{infinity
  transverse homology}, can be seen as an invariant of topological
knots in $\R^3$ with no reference to a contact structure; as such, it
is an extension and generalization of knot contact homology.

The transverse invariant described here is a combinatorial form of a
more general invariant of transverse knots in contact
$3$-manifolds. This general invariant is introduced in
\cite{EENStransverse}, where it is also proven to specialize to the
combinatorial complex studied in this paper in the case of knots in standard
contact $\R^3$.\footnote{The
combinatorial invariant defined in \cite{EENStransverse} uses slightly
different sign conventions from the
one defined here, but the two invariants are equivalent. See
Section~\ref{ssec:eens} for further discussion.} Here is a brief summary of the geometric construction in
\cite{EENStransverse}, building on the work in \cite{EENS}, in the
case of interest to us: a
knot $K \subset (\R^3,\xi_{\text{std}})$ lifts
to a Legendrian torus $\Lambda_K$, the conormal bundle to $K$, in the
cosphere bundle
$ST^*\R^3$; the contact structure $\xi$ similarly
lifts to two submanifolds $\widetilde{\xi},\widetilde{-\xi}\subset
ST^*\R^3$ corresponding to the two possible coorientations of
$\xi$. If $K$ is transverse to $\xi$, then $\Lambda_K$ is disjoint
from both $\widetilde{\xi}$ and $\widetilde{-\xi}$. Now
$\widetilde{\xi}$ and $\widetilde{-\xi}$ lift to $4$-dimensional
submanifolds in the symplectization of $ST^*\R^3$, and these submanifolds are holomorphic with respect to an appropriate almost complex structure. Counting intersections of holomorphic disks with these holomorphic $4$-manifolds induces filtrations on the Legendrian contact homology
complex for $\Lambda_K$, resulting in the invariant that we call
transverse homology. We keep track of these two
filtrations through two parameters $U$ and $V$, which encode intersection numbers with $\widetilde{\xi}$ and $\widetilde{-\xi}$.

We will henceforth suppress the geometric origins of transverse
homology, and treat the invariant purely combinatorially. In
particular, we will present combinatorial proofs of the invariance
results for transverse homology. Please see \cite{EENStransverse} for
the parallel contact-geometric story.

The formulation of transverse homology described in this paper is
defined in terms of a braid whose closure is the topological or
transverse knot under consideration. By the classical Markov Theorem,
isotopy classes of topological knots (or links) correspond to
equivalence classes of braids modulo conjugation and positive/negative
stabilization/destabilization. For the purposes of this paper, we can
define a transverse knot likewise.

\begin{definition}
A \textit{transverse knot} or link in $\R^3$ is an equivalence class
of braids modulo conjugation and positive de/stabilization.
\end{definition}

\noindent
This definition is equivalent to the usual definition, a knot
everywhere transverse to the standard contact structure, by the
Transverse Markov Theorem \cite{OSh,Wrinkle}.

We now present our main results. Throughout this paper, we write $R =
\Z[\lambda^{\pm
  1},\mu^{\pm 1}]$. To any braid $B$ whose closure has one
component\footnote{All of the invariants we present in this paper have
  corresponding analogues for multi-component links, where the base
  ring $R$ is replaced by $\Z[\lambda_1^{\pm 1},\mu_1^{\pm
    1},\ldots,\lambda_k^{\pm 1},\mu_k^{\pm 1}]$ for $k$ the number of
  components of the link. We have specialized to the one-component
  knot case for simplicity; see \cite{EENStransverse} for more on the
  general link invariants.}, we will associate
a differential graded algebra over $R[U,V]$, the
\textit{transverse complex}
\[
B \leadsto (\CTminus_*(B),\dminus(B)).
\]
We remark that unlike in Heegaard Floer homology, $U$ and $V$ have
grading $0$. Define the \textit{transverse homology} of $B$ to be the
graded $R[U,V]$-algebra $\HTminus_*(B) = H_*(\CTminus(B),\dminus(B))$.

\begin{theorem}
Up to stable tame isomorphism, the transverse complex
$(\CTminus_*(B),\dminus(B))$ is invariant under braid conjugation and
positive braid stabilization, and thus constitutes an invariant of the
transverse knot underlying the braid $B$. In particular, $\HTminus_*(B)$ is a transverse invariant. In addition, setting $U=V=1$ in
$(\CTminus_*(B),\dminus(B))$ yields the framed knot DGA from
\cite{Ngframed}.
\label{thm:main}
\end{theorem}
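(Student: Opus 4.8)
The plan is to adapt the combinatorial invariance argument for the framed knot DGA of \cite{Ngframed}, carrying the two formal variables $U$ and $V$ along at every step. The key structural input is that the braid-dependence of $(\CTminus_*(B),\dminus(B))$ enters only through a pair of matrices $\Phil_B$ and $\Phir_B$ over $R[U,V]$ attached to $B$, and that the assignment $B\mapsto(\Phil_B,\Phir_B)$ is multiplicative under braid concatenation. Since every braid is a word in the Artin generators $\sigma_i^{\pm1}$, conjugation $B\mapsto\beta B\beta^{-1}$ factors as a sequence of conjugations by single generators; moreover, once one shows that $\CTminus_*(B)$ and $\CTminus_*(\sigma_iB\sigma_i^{-1})$ are stably tame isomorphic for \emph{every} braid $B$, conjugation by $\sigma_i^{-1}$ is handled automatically, since it is the inverse operation and stable tame isomorphism is an equivalence relation. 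So it suffices to treat conjugation by a single $\sigma_i$ and, separately, a single positive stabilization.

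For conjugation by $\sigma_i$, I would stabilize both complexes by the generators supported near the relevant crossing and then build an explicit tame isomorphism between the stabilized complexes as a composition of elementary isomorphisms implementing the local ``slide'' of the $\sigma_i$-crossing around the braid closure. The combinatorics of this slide is essentially that of \cite{Ngframed}; the one genuinely new feature is that each elementary substitution must carry the correct monomial in $U$ and $V$, dictated by the two filtration levels, and one must verify that these substitutions never require inverting $U$ or $V$, so that the isomorphism is defined over $R[U,V]$ itself and not over a localization. Reconciling this $U,V$-weighting with the elementary moves — rather than the underlying algebra, which is already known — is the step I expect to require the most care.

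For positive stabilization $B\in B_n\leadsto B\sigma_n\in B_{n+1}$, I would identify the generators of $\CTminus_*(B\sigma_n)$ that are new relative to $\CTminus_*(B)$ and check that among them there are generators $x$ with $\dminus x=\pm u\,y+(\text{higher-filtration terms})$ for another new generator $y$ and a coefficient $u$ that is a genuine unit of $R[U,V]$ — that is, a unit of $R$ carrying \emph{no} power of $U$ or $V$. Such acyclic pairs can be removed by elementary isomorphisms via the standard cancellation lemma for freely generated DGAs, and after removing them the differential on the surviving generators reduces, after one further tame change of variables, to $\dminus(B)$; this exhibits $\CTminus_*(B\sigma_n)$ and $\CTminus_*(B)$ as stably tame isomorphic. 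It is precisely here that the left--right asymmetry introduced by $U$ and $V$ enters: for a \emph{negative} stabilization the leading coefficient of the analogous differential acquires a factor of $U$ or $V$ and the cancellation fails in $R[U,V]$, which is the combinatorial shadow of the fact that a negatively stabilized braid need not be transverse.

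Finally, the claim that $U=V=1$ recovers the framed knot DGA is immediate: by construction the generators and the $\Z$-grading of $\CTminus_*(B)$ coincide with those of the framed knot DGA of \cite{Ngframed}, and setting $U=V=1$ in $\dminus(B)$ collapses every $U,V$-monomial to $1$ and returns the framed differential verbatim. Combining the two invariance statements with the definition of a transverse knot as a braid modulo conjugation and positive de/stabilization (equivalently, with the Transverse Markov Theorem \cite{OSh,Wrinkle}) shows that the stable tame isomorphism class of $(\CTminus_*(B),\dminus(B))$ depends only on the underlying transverse knot; since stable tame isomorphism preserves the homology algebra, $\HTminus_*(B)$ is a transverse invariant as well.
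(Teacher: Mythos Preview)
Your outline is essentially the paper's approach: adapt the invariance proof from \cite{Ngframed} and track the $U,V$-weights, verifying that no step requires inverting $U$ or $V$. Your identification of the key asymmetry between positive and negative stabilization --- that the leading coefficient in the cancellation picks up a factor of $U$ or $V$ only in the negative case --- is exactly the point.

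Two corrections and one addition. First, conjugation needs no stabilization: the two DGAs already have the same generating set, and the paper writes down a direct tame isomorphism, using essentially the same matrix formulas as in \cite{Ngframed} with the $U,V$-weights inserted (and with the aid of an auxiliary result, Proposition~\ref{prop:lam}, that the DGA depends only on $\det\Lambda_B$; this is needed when $k=1$ to move the $\lambda$-entry of $\Lambda_B$ to a different diagonal slot). Second, for positive stabilization the paper does not cancel pairs directly in the full complex for $B\sigma_n$; instead it first passes to a smaller, stably tame isomorphic ``modified'' DGA (Proposition~\ref{prop:modifiedDGA}), then \emph{adds} acyclic pairs to the DGA for $B$ and matches the result with the DGA for $B\sigma_0$ via an explicit identification. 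The generators that get cancelled are not simply the ``new'' ones, and your description of pairs $(x,y)$ with $\dminus x=\pm u\,y+\cdots$ among the new generators is a bit too tidy; the actual identification mixes old and new generators and is checked to be tame by decomposing it into an ordered sequence of elementary substitutions.
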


As is familiar from Heegaard Floer homology or similar settings, we
can derive several
auxiliary complexes from the transverse complex.

\begin{definition}
Let $B$ be a braid.
\begin{itemize}
\item
The \textit{hat transverse complex} $(\CThat_*(B),\dhat(B))$ is the
differential graded algebra over $R$ obtained from
$(\CTminus_*(B),\dminus(B))$ by setting $U=0$ and $V=1$; the
\textit{hat transverse homology} $\HThat_*(B)$ is the graded
$R$-algebra $H_*(\CThat(B),\dhat(B))$.
\item
The \textit{double-hat transverse complex}
$(\CTdoublehat_*(B),\ddoublehat(B))$ is the differential graded
algebra over $R$ obtained from $(\CTminus_*(B),\dminus(B))$ by setting
$U=V=0$; the
\textit{double-hat transverse homology} $\HTdoublehat_*(B)$ is the graded
$R$-algebra $H_*(\CTdoublehat(B),\ddoublehat(B))$.
\item
The \textit{infinity transverse complex}
$(\CTinfty_*(B),\dinfty(B))$ is the differential graded algebra
over $R[U^{\pm 1},V^{\pm 1}]$ obtained from
$(\CTminus_*(B),\dminus(B))$ by tensoring with $R[U^{\pm 1},V^{\pm
  1}]$ and replacing $\lambda$ by $\lambda (U/V)^{-(\sl(B)+1)/2}$,
where $\sl(B) = w(B)-n(B)$ is the difference between the writhe
(algebraic crossing number) of $B$ and the number of strands in $B$;
the
\textit{infinity transverse homology} $\HTinfty_*(B)$ is the graded
$R[U^{\pm 1},V^{\pm
  1}]$-algebra $H_*(\CTinfty(B),\dinfty(B))$.
\end{itemize}
\label{def:flavors}
\end{definition}

\noindent
Note the substitution $\lambda \mapsto \lambda (U/V)^{-(\sl(B)+1)/2}$
in the infinity theory, which is allowed
once $U,V$ are invertible, and which is necessary for the invariance
statement below.

Besides the complexes listed in
Definition~\ref{def:flavors} above, there are many others that one
might consider: e.g., the DGA over $R$ obtained by setting $U=1,
V=0$. Many of these are equivalent to one of the complexes from
Definition~\ref{def:flavors}; see the discussion of symmetries in
Section~\ref{sec:symmetries}. There are other variants, e.g., a ``$+$
version'', that
we will not consider here.

\begin{theorem}
Let $B$ be a braid.
\begin{enumerate}
\item
\label{infty1}
Up to stable tame isomorphism, the DGAs $(\CThat_*(B),\dhat(B))$ and
$(\CTdoublehat_*(B),\ddoublehat(B))$ are invariants of the transverse
knot underlying $B$. In particular, $\HThat_*(B)$ and
$\HTdoublehat_*(B)$ are transverse invariants.
\item
\label{infty2}
Up to stable tame isomorphism, the DGA $(\CTinfty_*(B),\dinfty(B))$ is
an invariant of the knot type given by the closure of $B$. In
particular, $\HTinfty_*(B)$ is a topological invariant.
\end{enumerate}
\label{thm:infty}
\end{theorem}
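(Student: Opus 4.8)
The plan is to deduce Theorem~\ref{thm:infty} from Theorem~\ref{thm:main} by carefully tracking how the three auxiliary complexes of Definition~\ref{def:flavors} behave under the two elementary moves --- braid conjugation and positive stabilization --- and, for part~\eqref{infty2}, the additional move of positive destabilization (equivalently, checking that the infinity theory is also invariant under the one move that distinguishes topological from transverse equivalence, namely negative stabilization). First I would recall the mechanism behind Theorem~\ref{thm:main}: conjugation and positive stabilization each induce an explicit stable tame isomorphism of $(\CTminus_*,\dminus)$ over $R[U,V]$. A stable tame isomorphism is a composite of (i) stabilizations adding free generators with acyclic differential and (ii) tame automorphisms, each sending a generator $a_i$ to $a_i + (\text{terms not involving }a_i)$, all defined over the ground ring. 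For parts~\eqref{infty1}, the key observation is that specialization of scalars is functorial: applying the ring map $R[U,V]\to R$ sending $U\mapsto 0, V\mapsto 1$ (resp.\ $U,V\mapsto 0$) to the chain-level stable tame isomorphism furnished by Theorem~\ref{thm:main} yields a stable tame isomorphism of the specialized complexes over $R$. The only points needing care are that the specialized stabilization generators still have acyclic differential --- which holds because an acyclic two-term complex $\partial a = b$, $\partial b = 0$ remains so after any base change --- and that the tame triangular form is preserved under ring maps, which is immediate. Thus $(\CThat_*,\dhat)$ and $(\CTdoublehat_*,\ddoublehat)$ inherit invariance under conjugation and positive stabilization, proving~\eqref{infty1}.

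For part~\eqref{infty2}, there are two separate issues. The first is again a base-change argument: tensoring the Theorem~\ref{thm:main} isomorphism with $R[U^{\pm1},V^{\pm1}]$ over $R[U,V]$ gives a stable tame isomorphism of the localized complexes, and then one must check the substitution $\lambda\mapsto\lambda(U/V)^{-(\sl(B)+1)/2}$ is compatible. Here the main obstacle --- and the reason the substitution is present at all --- is that $\sl(B)$ is not conjugation-invariant-vs-stabilization-invariant in the naive way: conjugation preserves both $w(B)$ and $n(B)$, hence $\sl(B)$, but positive stabilization changes $w(B)\to w(B)+1$ and $n(B)\to n(B)+1$, leaving $\sl(B)$ fixed, whereas \emph{negative} stabilization sends $w(B)\to w(B)-1$, $n(B)\to n(B)+1$, so $\sl(B)\to\sl(B)-2$ and the exponent $-(\sl(B)+1)/2$ increases by $1$. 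So the real content of~\eqref{infty2} beyond~\eqref{infty1} is invariance under negative stabilization. I would handle this by writing down the negative-stabilization map on generators explicitly (as in \cite{Ngframed}, where the analogous computation underlies the framed knot DGA), observing that in the $U=V=1$ specialization it is precisely the stable tame isomorphism witnessing that the framed knot DGA is a topological invariant, and then lifting that computation to $R[U^{\pm1},V^{\pm1}]$: the negative-stabilization formulas will involve $U,V$ in such a way that they become a genuine stable tame isomorphism over $R[U^{\pm1},V^{\pm1}]$ precisely after the $\lambda$-rescaling, because the rescaling exactly absorbs the discrepancy between how $\lambda$ and the $U,V$-weights transform.

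The step I expect to be the main obstacle is verifying that the negative-stabilization chain map, written with its $U,V$-coefficients, is a tame automorphism (up to stabilization) over $R[U^{\pm1},V^{\pm1}]$ with the rescaled $\lambda$ --- i.e.\ pinning down the exact powers of $U/V$ attached to each generator so that the rescaling is forced to be $(U/V)^{-(\sl(B)+1)/2}$ and not something else, and checking that with this choice the differential is genuinely intertwined. This is a bookkeeping computation built on the explicit braid-DGA formulas, and it is where the exponent $(\sl(B)+1)/2$ (necessarily a half-integer feature of the self-linking normalization) gets its arithmetic justification; once the powers are matched, invertibility of $U$ and $V$ makes the rescaling legitimate and the rest follows formally from the base-change argument of~\eqref{infty1}. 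I would also note in passing that $\sl(B)+1 = w(B)-n(B)+1$ has the same parity as the number of strands minus one in a way that keeps the exponent consistent under the moves, and record the routine check that the three specializations are mutually compatible with the symmetries discussed in Section~\ref{sec:symmetries}.
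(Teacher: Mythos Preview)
Your proposal is correct and follows essentially the same approach as the paper: part~\eqref{infty1} is a direct corollary of Theorem~\ref{thm:main} by base change, and for part~\eqref{infty2} the only new content is invariance under negative stabilization, which the paper handles exactly as you describe---by writing down an explicit stable tame isomorphism over $R[U^{\pm1},V^{\pm1}]$ (using the modified form of the DGA from Proposition~\ref{prop:modifiedDGA} to streamline the bookkeeping) whose coefficients involve $U^{-1}$ and $V^{-1}$ and which intertwines the differentials precisely after the $\lambda\mapsto\lambda(U/V)^{-(\sl(B)+1)/2}$ rescaling. Your identification of the main obstacle---pinning down the powers of $U/V$ so that the rescaling is forced and the map is genuinely tame---is exactly where the paper's computational work lies.
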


Statement (\ref{infty1}) in Theorem~\ref{thm:infty} is a direct
corollary of Theorem~\ref{thm:main}, while the proof of statement
(\ref{infty2}) requires checking that $(\CTinfty_*(B),\dinfty(B))$ is
invariant under negative braid stabilization as well as conjugation
and positive stabilization.

We summarize the various flavors of transverse homology in the diagram
below. We indicate which of the flavors are transverse knot invariants and
which are topological knot invariants by using inputs denoted $T$ for a
transverse knot and $K$ for a topological knot. The final DGA
$CC_*(K)$ in the diagram is the framed knot DGA from \cite{Ngframed}.

\[
\fbox{
\xymatrix{
& \CThat_*(T) \textrm{ over } R  \\
\CTminus_*(T) \textrm{ over } R[U,V]
\ar[ur]^-{U=0,\,V=1 \hspace{5ex}} \ar[r]^-{U=V=0}
\ar[dr]^(0.6){\hspace{13ex} \otimes
  R[U^{\pm 1},V^{\pm
    1}];~~~
\lambda \mapsto \lambda(U/V)^{-(\sl+1)/2}} & \CTdoublehat_*(T) \textrm{ over } R  \\
& \CTinfty_*(K) \textrm{ over } R[U^{\pm 1},V^{\pm 1}] \ar[d]^{U=V=1} \\
& CC_*(K) \textrm{ over } R
}}
\]

\vspace{12pt}

We will see that
transverse homology
constitutes an \textit{effective transverse invariant}; that is, it
can be used to distinguish pairs of transverse knots with the same
topological type and self-linking number. In particular, we can use
transverse homology to produce the following result.

\begin{proposition}
The knot types $m(7_2)$, $m(7_6)$, $m(9_{44})$, $9_{48}$,
$m(10_{132})$, $10_{136}$, $m(10_{140})$, $m(10_{145})$, $m(10_{161})$, and $12n_{591}$ are \label{prop:nonsimple}
transversely nonsimple.
\end{proposition}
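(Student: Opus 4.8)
The plan is to exhibit, for each listed knot type $K$, two braids $B_1,B_2$ whose closures are both $K$ and which satisfy $\sl(B_1)=\sl(B_2)$, but whose transverse complexes $(\CTminus_*(B_i),\dminus(B_i))$ are \emph{not} stably tame isomorphic. By Theorem~\ref{thm:main}, transverse homology is invariant under conjugation and positive de/stabilization, so a difference between the two complexes shows that $B_1$ and $B_2$ represent distinct transverse knots; together with $\sl(B_1)=\sl(B_2)$ and the fact that they have the same underlying topological type, this is exactly the assertion that $K$ is transversely nonsimple.

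First I would assemble the braid pairs. For several of these knot types (e.g.\ $m(7_2)$, $m(7_6)$) candidate transverse representatives already appear in the literature on transverse nonsimplicity and in transverse knot tables; the remaining ones can be produced by a short computer search over braid words of bounded length and strand number whose closures have the prescribed knot type — identified via a classical invariant such as the HOMFLY or Jones polynomial together with a braid-simplification routine — and whose writhe yields the desired self-linking number. At this stage I would record $\sl(B_i)=w(B_i)-n(B_i)$ to confirm that the self-linking numbers agree, and verify the common topological type either directly from the braids or by checking that the infinity transverse homologies $\HTinfty_*(B_1)$ and $\HTinfty_*(B_2)$ coincide, as they must by Theorem~\ref{thm:infty}(\ref{infty2}) if the closures are the same knot.

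Since stable tame isomorphism type is hard to compare directly, I would next extract a computable numerical invariant from $(\CTminus_*(B),\dminus(B))$, mirroring the use of augmentation varieties in knot contact homology. For a finite field $\kk$ and a choice of values $(\lambda_0,\mu_0,U_0,V_0)\in \kk^{\times}\times\kk^{\times}\times\kk\times\kk$, let $N_{\kk}(B;\lambda_0,\mu_0,U_0,V_0)$ be the number of unital $\kk$-algebra homomorphisms $\epsilon\colon \CTminus_0(B)\to\kk$ that send the four parameters to the chosen values and satisfy $\epsilon\circ\dminus=0$ on the degree-$1$ generators. Because augmentations pull back along stable tame isomorphisms, each such count — and more refined data such as counts organized by the $\Z/2$- or full grading, counts of higher-dimensional matrix augmentations, Poincar\'e polynomials of the resulting linearized homologies, or data compatible with the mirror symmetry of Section~\ref{sec:symmetries} — is a transverse invariant. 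For computational economy one may first pass to the simpler $R$-algebras $\CThat_*(B)$ or $\CTdoublehat_*(B)$ of Definition~\ref{def:flavors}, whose augmentation counts are likewise invariants by Theorem~\ref{thm:infty}(\ref{infty1}).

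Finally I would run these counts by computer for each pair over a range of small primes and parameter values and display, in each case, a choice for which $N_{\kk}(B_1;\cdots)\neq N_{\kk}(B_2;\cdots)$. The conceptual substance of the argument is the reduction in the previous paragraph; the remaining work is the machine verification that $\dminus$ is computed correctly from the braid word and that the augmentation counts genuinely differ. The main obstacle is therefore twofold: locating braid pairs that are actually transversely distinct — many plausible pairs turn out to have identical transverse homology, so the search cannot be avoided — and certifying the finite-field computations, in particular testing enough specializations that an observed discrepancy is not an artifact of an unlucky choice of $\kk$ or of the values assigned to $\lambda,\mu,U,V$.
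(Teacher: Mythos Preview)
Your proposal is correct and follows essentially the same route as the paper: distinguish the two transverse representatives by comparing augmentation counts of the transverse DGA over a finite field, which are invariants of the stable tame isomorphism class by Theorem~\ref{thm:main}. The paper's implementation is simply more concrete---it takes the braid pairs directly from the Legendrian knot atlas rather than searching for them, works exclusively with $\HThat_0$ (the specialization $U=0$, $V=1$), and finds that the single field $\kk=\Z/3$ with $(\lambda_0,\mu_0)\in\{(1,1),(2,1)\}$ already suffices for all ten knots; your more elaborate suggestions (linearized Poincar\'e polynomials, matrix augmentations, verifying topological type via $\HTinfty$) are unnecessary in practice, and note that agreement of $\HTinfty$ would only be a consistency check, not a proof of topological equivalence.
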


Six of the ten knots in Proposition~\ref{prop:nonsimple} were previously known to be nonsimple, using a Heegaard Floer transverse invariant (either grid-diagram \cite{OST} or LOSS \cite{LOSS}). The other four, $m(7_6)$, $m(9_{44})$, $9_{48}$, and $10_{136}$, cannot be
distinguished by any previously known invariant, including Heegaard
Floer.

As a crude gauge of the effectiveness of our invariant, there are
thirteen total knots with arc index at most $9$ that are conjectured
to be transversely nonsimple by the ``Legendrian knot atlas''
\cite{Atlas}; of these, transverse homology can prove nonsimplicity for 
at least the ten listed in Proposition~\ref{prop:nonsimple}.
Transverse homology
could in theory be applied to the three remaining knots as well,
though probably with some difficulty. See
Section~\ref{sec:applications} for further discussion, including a
table of all thirteen knots with transverse representatives in
grid-diagram and braid form.

The proof of Proposition~\ref{prop:nonsimple} via transverse homology
involves auxiliary calculations in
\textit{Mathematica}, using a program \texttt{transverse.m}
that computes the transverse DGA of any braid and can compute various
numerical invariants derived from the transverse DGA. Readers
interested in doing computations with transverse homology can download
\texttt{transverse.m} from the author's web site.

All of the applications to transverse nonsimplicity given in this paper use only a small part of the transverse homology invariant, the degree $0$ invariant $\HThat_0$ in the hat version. A reader familiar with knot contact homology may recall that the degree $0$ part of knot contact homology, $\mathit{HC}_0$, has a simple topological interpretation as the ``cord algebra'' of the knot \cite{NgKCH2,Ngframed}. A similar interpretation of degree $0$ transverse homology would be interesting, and in particular might lead to transverse nonsimplicity results for general families of knots, but does not presently exist. We hope to return to this issue in the future.

The topological version of transverse homology, $\HTinfty_*$, may be
of interest in its own right as a generalization of knot contact
homology. On the contact-geometric side, passing from knot contact
homology $\mathit{HC}_*(K)$ to infinity transverse homology
$\HTinfty_*(K)$ represents an extension of the base ring for the
Legendrian contact homology of $\Lambda K$ in the contact manifold
$ST^*\R^3$ from $\Z[H_1(\Lambda K)]$ to $\Z[H_2(ST^*\R^3,\Lambda
K)]$. See \cite{EENStransverse} for further discussion.

We will not discuss topological aspects of infinity transverse
homology much in this paper, but one can produce from $\HTinfty_0(K)$ a
three-variable knot polynomial $\Aug_K(\lambda,\mu,U)$, the
``augmentation polynomial'', in an analogous manner to the
two-variable augmentation polynomial $\tilde{A}_K(\lambda,\mu)$
produced from $\mathit{HC}_0(K)$ in \cite{Ngframed}. It was shown in
\cite{Ngframed} that $\tilde{A}_K(\lambda,\mu)$ contains the familiar
$A$-polynomial of $K$ as a factor. We suspect that
$\Aug_K(\lambda,\mu,1) = \tilde{A}_K(\lambda,\mu)$ in general, in
which case $\Aug_K(\lambda,\mu,U)$ could be viewed as some sort of
three-variable generalization of the $A$-polynomial.

Here is an outline of the rest of the paper. We present algebraic
definitions for the various flavors of transverse homology in
Section~\ref{sec:defs} and prove invariance
in Section~\ref{sec:proofs}. In Section~\ref{sec:symmetries}, we
establish some basic properties and symmetries of transverse
homology. Section~\ref{sec:applications} presents a number of
computations of transverse homology, discussing the three-variable
augmentation polynomial for topological knots, and demonstrating the
effectiveness of transverse homology as an invariant of transverse
knots.

\section*{Acknowledgments}

Much of the work for this paper was done during the author's residency at the
Mathematical Sciences Research Institute in spring 2010. In
particular, the starting point for this paper was a series of
conversations at MSRI between John Etnyre, Michael Sullivan, and the
author. I am deeply indebted to them and Tobias Ekholm, my collaborators on
the geometric side of this project \cite{EENStransverse}. I also thank
the referee for helpful comments. This work
was supported by NSF grant DMS-0706777 and NSF CAREER grant
DMS-0846346.

\section{The Invariants}
\label{sec:defs}

Let $B \in B_n$ be an $n$-strand braid. In this section, we associate
a differential graded algebra $(\alg,\d)$ to $B$, the transverse DGA
of $B$. Algebraically, the transverse DGA is just
an easy enhancement of the framed knot DGA from
\cite{Ngframed}. A main result of this paper is that, up to stable
tame isomorphism, $(\alg,\d)$ is invariant under braid conjugation and
positive braid stabilization, and is thus an invariant of the underlying transverse knot.

Recall that $R$ represents the ring $\Z[\lambda^{\pm
  1},\mu^{\pm 1}]$.
Write $\alg_n$ for the tensor algebra over $R[U,V]$ generated by
$n(n-1)$ formal variables $a_{ij}$ with $1\leq i,j\leq n$ and $i\neq
j$. As in \cite{Ngframed}, there is a representation $\phi
:\thinspace B_n \to \Aut \alg_n$ defined on the generators $\sigma_k$ of
$B_n$ by
\[
\phi_{\sigma_k} :\thinspace
\begin{cases}
a_{ki} \mapsto -a_{k+1,i}-a_{k+1,k}a_{ki}, & i\neq k,k+1 \\
a_{ik} \mapsto -a_{i,k+1}-a_{ik}a_{k,k+1}, & i\neq k,k+1 \\
a_{k+1,i} \mapsto a_{ki}, & i\neq k,k+1 \\
a_{i,k+1} \mapsto a_{ik}, & i\neq k,k+1 \\
a_{k,k+1} \mapsto a_{k+1,k} & \\
a_{k+1,k} \mapsto a_{k,k+1} & \\
a_{ij} \mapsto a_{ij}, & i,j \neq k,k+1.
\end{cases}
\]
We will write $\phi_B$ for the automorphism of $\alg_n$ assigned to a
braid $B$ by the map $\phi$.

It will be convenient to consider $n\times n$ matrices with entries in
$\alg_n$; write $\Mat_n(\alg_n)$ for the
$R[U,V]$-algebra of such matrices. To a braid $B\in B_n$, the homomorphism $\phi$ associates two matrices $\Phil_B,\Phir_B\in\Mat_n(\alg_n)$, as
follows. View $B$ as a braid in $B_{n+1}$ by adding an additional
strand labeled $n+1$, and write $\phi_B^{\ext}$ for the resulting
automorphism of $\alg_{n+1}$; then for any $i=1,\ldots,n$, we can write
\begin{align*}
\phi_B^\ext(a_{i,n+1}) &= \sum_{\ell=1}^n (\Phil_B)_{i\ell} a_{\ell,n+1} \\
\phi_B^\ext(a_{n+1,i}) &= \sum_{\ell=1}^n a_{n+1,\ell} (\Phir_B)_{\ell
  i}
\end{align*}
for $(\Phil_B)_{i\ell},(\Phir_B)_{\ell i}\in\alg_n$.

Assemble the
generators $a_{ij}$ into two matrices
$\A_\ll,\A_\ur\in\Mat_n(\alg_n)$, with
\[
(\A_\ll)_{ij} = \begin{cases} a_{ij} & i>j \\
-1 & i=j \\
0 & i<j
\end{cases}
\hspace{0.5in} \text{and}
\hspace{0.5in}
(\A_\ur)_{ij} = \begin{cases} 0 & i>j \\
-1 & i=j \\
a_{ij} & i<j.
\end{cases}
\]
Define two more matrices $\Ahat$, $\Acheck\in\Mat_n(\alg_n)$ by
\begin{align*}
\Ahat &= \A_\ll + \mu U \A_\ur \\
\Acheck &= V \A_\ll + \mu \A_\ur;
\end{align*}
that is,
\[
(\Ahat)_{ij} = \begin{cases} a_{ij} & i>j \\
-1-\mu U & i=j \\
\mu U a_{ij} & i<j
\end{cases}
\hspace{0.5in} \text{and}
\hspace{0.5in}
(\Acheck)_{ij} = \begin{cases} V a_{ij} & i>j \\
-V-\mu & i=j \\
\mu a_{ij} & i<j.
\end{cases}
\]
Note that when $U=V=1$, the matrices $\Ahat$ and $\Acheck$ are
identical and equal to the matrix
labeled $A$ in \cite{Ngframed}.

Finally, let
$\Lam_B,\Lam_B'$ denote the $n\times n$ diagonal matrices
$\diag(\lambda \mu^{-w(B)},1,1,\ldots,1)$ and
$\diag(\lambda \mu^{-w(B)}(U/V)^{-(\sl(B)+1)/2},1,1,\ldots,1)$, where $w(B),\sl(B)$ are the
writhe and self-linking number of $B$.

\begin{definition}
\begin{enumerate}
\item
The \textit{degree-$0$ transverse homology} of $B$, written
$\HTminus_0(B)$, is the $R[U,V]$-algebra given by the quotient
\[
\HTminus_0(B) = \alg_n \, / \, (\Ahat -
\Lam_B\cdot\Phil_B\cdot\Acheck,
\Acheck - \Ahat \cdot \Phir_B \cdot \Lam_B^{-1}),
\]
where $(\mathbf{M}_1,\mathbf{M}_2)$ represents the two-sided ideal in
$\alg_n\otimes
R[U,V]$ generated by the entries of the two matrices $\mathbf{M}_1$
and $\mathbf{M}_2$.

\item
The \textit{degree-$0$ hat transverse homology} and \textit{degree-$0$
  double hat transverse homology} of $B$ are the $R$-algebras given by
\begin{align*}
\HThat_0(B) &= \alg_n \, / \, (\Ahat -
\Lam_B\cdot\Phil_B\cdot\Acheck,
\Acheck - \Ahat \cdot \Phir_B \cdot \Lam_B^{-1})|_{U=0,V=1} \\
\HTdoublehat_0(B) &= \alg_n \, / \, (\Ahat -
\Lam_B\cdot\Phil_B\cdot\Acheck,
\Acheck - \Ahat \cdot \Phir_B \cdot \Lam_B^{-1})|_{U=V=0}.
\end{align*}

\item
The \textit{degree-$0$ infinity transverse homology} of $B$ is the
$R[U^{\pm 1},V^{\pm 1}]$-algebra given by
\[
\HTinfty_0(B) = (\alg_n\otimes R[U^{\pm 1},V^{\pm 1}])\, / \, (\Ahat -
\Lam_B'\cdot\Phil_B\cdot\Acheck,
\Acheck - \Ahat \cdot \Phir_B \cdot (\Lam_B')^{-1}).
\]
\end{enumerate}
\label{def:deg0}
\end{definition}

\noindent
Note that if we set $U=V=1$ in $\HTminus_0(B)$, we recover the
degree-$0$ framed knot contact homology (or ``cord algebra'')
$HC_0(B)$ from \cite{Ngframed}.

In practice, the degree-$0$ transverse homologies defined above are
easier to work with than the full transverse complex defined below. In
particular, the proofs that the degree-$0$ transverse homologies are
transverse invariants (or topological invariants, in the case of
$\HTinfty_0$) are easier than the invariance proofs for the full
transverse DGA, and the applications in Section~\ref{sec:applications}
rely only on $\HThat_0$.

We next construct the full invariant, a differential graded algebra
$(\CTminus_*(B),\dminus(B))$
whose homology in degree $0$ is $\HTminus_0(B)$. The
algebra $\CTminus_*(B)$ is the tensor algebra over $R[U,V]$ freely
generated by the following:
\begin{itemize}
\item
the $n(n-1)$ generators $a_{ij}$, where $1\leq i,j\leq n$ and $i\neq
j$, of degree $0$
\item
$n(n-1)$ generators $b_{ij}$, where $1\leq i,j\leq n$ and $i\neq j$,
of degree $1$
\item
$2n^2$ generators $c_{ij}$ and $d_{ij}$, where $1\leq i,j\leq n$, of
degree $1$
\item
$2n^2$ generators $e_{ij}$ and $f_{ij}$, where $1\leq i,j\leq n$, of
degree $2$.
\end{itemize}

Assemble these six families of generators into six $n\times n$
matrices $\A,\B,\C,\D,\E,\F$, where $\A=(a_{ij})$ and so forth, and the
diagonal entries in $\A,\B$ are all $-2,0$ respectively; also define
two auxiliary matrices $\hat{\B}$, $\check{\B}$ by
\[
(\hat{\B})_{ij} = \begin{cases} b_{ij} & i>j \\
0 & i=j \\
\mu U b_{ij} & i<j
\end{cases}
\hspace{0.5in} \text{and}
\hspace{0.5in}
(\check{\B})_{ij} = \begin{cases} V b_{ij} & i>j \\
0 & i=j \\
\mu b_{ij} & i<j,
\end{cases}
\]
cf.\ the definitions of $\hat{\A}$ and $\check{\A}$ (which reduce to
$\A$ if we set $U=V=\mu=1$).

One final piece of notation: if $\mathbf{M} \in \Mat_n(\alg_n)$,
then write $\phi_B(\mathbf{M})$ and $\dminus(\mathbf{M})$ for the
matrices whose entries are the images of the entries of $M$ under
$\phi_B$ and $\dminus$.

\begin{definition}
The \textit{transverse DGA} of $B$, written
$(\CTminus_*(B),\dminus(B))$, is the differential graded algebra over
$R[U,V]$ with $\CTminus_*(B)$ defined as above and the differential
$\dminus(B)$ given by
\label{def:transDGA}
\begin{align*}
\dminus(\A) &= 0 \\
\dminus(\B) &= \A - \Lam_B \cdot \phi_B(\A) \cdot \Lam_B^{-1} \\
\dminus(\C) &= \Ahat - \Lam_B \cdot \Phil_B \cdot \Acheck \\
\dminus(\D) &= \Acheck - \Ahat \cdot \Phir_B \cdot \Lam_B^{-1} \\
\dminus(\E) &= \hat{\B} - \C - \Lam_B \cdot \Phil_B \cdot \D \\
\dminus(\F) &= \check{\B} - \D - \C \cdot \Phir_B \cdot \Lam_B^{-1}.
\end{align*}
\end{definition}

As described in Section~\ref{sec:intro}, we can construct other
flavors of transverse complexes by applying various algebraic
operations to the transverse DGA:
\begin{align*}
(\CThat_*(B),\dhat(B)) &= (\CTminus_*(B),\dminus(B))|_{U=0,V=1} \\
(\CTdoublehat_*(B),\ddoublehat(B)) &=
(\CTminus_*(B),\dminus(B))|_{U=0,V=0} \\
(\CTinfty_*(B),\dinfty(B)) &= (\CTminus_*(B)\otimes R[U^{\pm 1},V^{\pm
  1}],\dminus(B))~~~(\lambda \mapsto \lambda (U/V)^{-(\sl(B)+1)/2}).
\end{align*}
The first two complexes are DGAs over $R$, while the last is a DGA
over $R[U^{\pm 1},V^{\pm 1}]$.

\begin{example}
\label{ex:unknot}
Let $T_U$ and $S(T_U)$ denote the standard $\sl=-1$ transverse unknot and
the $\sl=-3$ transverse unknot, respectively, so that $S(T_U)$ is the
transverse stabilization of $T_U$. Note that $T_U$ and $S(T_U)$ are the
closures of the trivial braid in $B_1$ and $\sigma_1^{-1} \in B_2$,
respectively. The transverse DGA for $T_U$ is generated by four
generators $c,d,e,f$, with differential
\begin{align*}
\dminus(c) &= -1-\mu U+\lambda V+\lambda\mu \\
\dminus(d) &= -\lambda^{-1}(-1-\mu U+\lambda V+\lambda\mu) \\
\dminus(e) &= -c-\lambda d \\
\dminus(f) &= -d-\lambda^{-1} c.
\end{align*}
Up to stable tame isomorphism, the generators $d,f$ in this DGA can be
eliminated, resulting in the DGA generated by $c,e$ with differential
\begin{align*}
\dminus(c) &= -1-\mu U+\lambda V+\lambda\mu \\
\dminus(e) &= 0;
\end{align*}
see Proposition~\ref{prop:modifiedDGA} for a more general result.
The degree-$0$ transverse homology of $T_U$ is
\[
\HTminus_0(U) = R[U,V]/(-1-\mu U+\lambda V+\lambda\mu),
\]
with corresponding results
for the other flavors of transverse homology.

For $S(T_U)$, the transverse DGA is more involved to write down
explicitly, and we will not do it here. However, it is straightforward
to calculate that the degree-$0$ transverse homology is
\[
\HTminus_0(S(T_U)) = (R[U,V])[a_{12}]/(1+\mu U+V a_{12},
V+\mu+U a_{12}/\lambda).
\]
On replacing $\lambda$ by $\lambda U/V$, we deduce that
$\HTinfty_0(S(T_U)) \cong R[U^{\pm 1},V^{\pm
  1}]/(-1-\mu U+\lambda
V+\lambda \mu) \cong \HTinfty_0(T_U)$.
\end{example}

The main invariance results about the transverse complexes,
Theorems~\ref{thm:main} and~\ref{thm:infty}, state that, up to equivalence,
$\CTminus_*(B),\CThat_*(B),\CTdoublehat_*(B)$ are invariants of
transverse knots, while $\CTinfty_*(B)$ is an invariant of topological
knots; we will prove these results in Section~\ref{sec:proofs}.
The precise notion of equivalence is stable tame isomorphism,
originally defined by Chekanov \cite{Ch02} for DGAs over $\Z/2$. Since
our equivalences are for DGAs over $R[U,V]$ or $R[U^{\pm 1},V^{\pm
  1}]$, we recall here the definition of stable tame isomorphism,
stated for DGAs over general rings.

\begin{definition}
Let $\mathcal{R}$ be a commutative ring with unit, and let
$(\alg,\d),(\tilde{\alg},\tilde{\d})$ be
DGAs (more precisely, finitely generated semifree noncommutative
unital differential graded
algebras) over $\mathcal{R}$: $\alg$ has a distinguished set of
generators $\{a_1,\ldots,a_n\}$ and is generated as an
$\mathcal{R}$-module by words in the $a_i$'s, and similarly
$\tilde{\alg}$ has a distinguished set of generators
$\{\tilde{a}_1,\ldots,\tilde{a}_{\tilde{n}}\}$.

\begin{itemize}
\item
An \textit{elementary automorphism} of $\alg$ is an
$\mathcal{R}$-algebra automorphism of the form
\begin{align*}
a_i &\mapsto \alpha \,a_i + v & &\text{for some $i$}\\
a_j &\mapsto a_j & &\text{for all $j\neq i$},
\end{align*}
where $v$ is an element of $\alg$ not involving $a_i$, and $\alpha$ is
an invertible element of $\mathcal{R}$.
\item
In the case where $n=\tilde{n}$, a \textit{tame isomorphism} $\phi$ between
$(\alg,\d)$ and $(\tilde{\alg},\tilde{\d})$ is a composition of some number of
elementary automorphisms
of $\alg$ and the isomorphism $\alg \to \tilde{\alg}$ that sends the
generators $\{a_i\}$ to the generators $\{\tilde{a}_i\}$ in some
order, such that $\phi\circ\d = \tilde{\d}\circ\phi$.
\item
A \textit{stabilization} of $(\alg,\d)$ is a DGA generated by
$a_1,\ldots,a_n$ and two new generators $e_1,e_2$, with
$|e_1|=|e_2|+1$ and differential induced by the differential on $\alg$
along with $\d(e_1) = e_2$, $\d(e_2)=0$.
\item
Two DGAs are \textit{stable tame isomorphic} if, after some number of
stabilizations of each, there is a tame isomorphism between the
resulting DGAs.
\end{itemize}
\end{definition}

A key feature of stable tame isomorphism is that it is a special case
of quasi-isomorphism.

\begin{proposition}[cf.\ \cite{Ch02,ENS}]
If $(\alg,\d)$ and $(\tilde{\alg},\tilde{\d})$ are stable tame isomorphic
DGAs over $\mathcal{R}$,
then we have an isomorphism of $\mathcal{R}$-modules
$H_*(\alg,\d) \cong H_*(\tilde{\alg},\tilde{\d})$.
\label{prop:quasi}
\end{proposition}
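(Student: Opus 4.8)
The plan is to verify the statement by reducing it, step by step, to the two atomic operations out of which stable tame isomorphism is built --- tame isomorphism and stabilization --- and checking that each induces an isomorphism on homology. Since the conclusion is only an isomorphism of $\mathcal{R}$-modules, it suffices to treat the underlying chain complexes (forgetting the algebra structure), so I would immediately pass to viewing $(\alg,\d)$ and $(\tilde{\alg},\tilde{\d})$ as chain complexes of free $\mathcal{R}$-modules, graded by word length if convenient.

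\textbf{Step 1: tame isomorphisms are chain isomorphisms.} A tame isomorphism $\phi$ is by definition an $\mathcal{R}$-algebra isomorphism satisfying $\phi \circ \d = \tilde{\d} \circ \phi$; that is precisely a chain map that is bijective, hence a chain isomorphism, and it therefore induces an isomorphism $H_*(\alg,\d) \xrightarrow{\sim} H_*(\tilde{\alg},\tilde{\d})$. (One does not even need to unwind the description of $\phi$ as a composite of elementary automorphisms and a generator-permuting isomorphism --- the defining property is all that matters.) The only thing worth a sentence is that each elementary automorphism is genuinely invertible as an $\mathcal{R}$-algebra map, which uses that $\alpha$ is a unit of $\mathcal{R}$ and that $v$ does not involve $a_i$, so the substitution $a_i \mapsto \alpha^{-1}(a_i - v)$ gives a two-sided inverse.

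\textbf{Step 2: stabilization induces an isomorphism on homology.} Let $(\alg^s,\d^s)$ be a stabilization of $(\alg,\d)$, obtained by adjoining generators $e_1, e_2$ with $|e_1| = |e_2|+1$, $\d^s(e_1) = e_2$, $\d^s(e_2) = 0$, and $\d^s$ otherwise induced from $\d$. I would exhibit the inclusion $\iota\colon (\alg,\d) \hookrightarrow (\alg^s,\d^s)$ as a quasi-isomorphism. The cleanest route is to filter $\alg^s$ as a free $\mathcal{R}$-module by the number of occurrences of $e_1$ and $e_2$ in a word, or equivalently to write $\alg^s$, as a complex, as $\alg \otimes_{\mathcal{R}} T$ where $T$ is the tensor algebra on $e_1, e_2$; the key point is that $\d^s$ respects the ``$e$-word-length'' filtration and the associated graded differential acts on the $T$ factor as the tautological acyclic complex (the Koszul-type complex with $\d e_1 = e_2$), whose homology is just $\mathcal{R}$ concentrated in the empty word. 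A standard spectral sequence or direct contracting-homotopy argument (build an explicit homotopy $h$ that strips a leading $e_2$ and replaces it by $e_1$, checking $\d^s h + h \d^s = \mathrm{id} - \iota\pi$ for the obvious projection $\pi$) then shows $\iota$ is a quasi-isomorphism. Care is needed because the algebra is noncommutative, so $e_1, e_2$ can appear interleaved with the $a_i$ in a word; the homotopy must act on the \emph{first} (leftmost, say) occurrence to be well-defined, and one must confirm the Leibniz rule produces no obstruction to $\d^s h + h \d^s = \mathrm{id} - \iota\pi$. This bookkeeping is the main obstacle, though it is entirely routine and is exactly the argument appearing in \cite{Ch02,ENS}.

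\textbf{Step 3: assemble.} By definition, $(\alg,\d)$ and $(\tilde\alg,\tilde\d)$ stable tame isomorphic means there are stabilizations $(\alg^{s},\d^{s})$ and $(\tilde\alg^{\,s},\tilde\d^{\,s})$ and a tame isomorphism between them. Iterating Step 2, $H_*(\alg,\d)\cong H_*(\alg^{s},\d^{s})$ and $H_*(\tilde\alg,\tilde\d)\cong H_*(\tilde\alg^{\,s},\tilde\d^{\,s})$; by Step 1, $H_*(\alg^{s},\d^{s})\cong H_*(\tilde\alg^{\,s},\tilde\d^{\,s})$. Composing the three isomorphisms of $\mathcal{R}$-modules gives $H_*(\alg,\d)\cong H_*(\tilde\alg,\tilde\d)$, as claimed. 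I would remark that the argument is insensitive to the coefficient ring $\mathcal{R}$ --- it never uses anything beyond $\mathcal{R}$ being a commutative unital ring --- which is the whole point of recalling the proposition in this generality, since here $\mathcal{R}$ is $R[U,V]$ or $R[U^{\pm1},V^{\pm1}]$ rather than a field.
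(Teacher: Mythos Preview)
The paper does not actually prove this proposition; it is stated with a reference to \cite{Ch02,ENS} and no argument is given. Your proposal is correct and is essentially the standard proof found in those references: reduce to the two atomic moves, observe that a tame isomorphism is already a chain isomorphism, and show that a single stabilization is a quasi-isomorphism via an explicit contracting homotopy (or, equivalently, by splitting off the acyclic pieces coming from words containing $e_1,e_2$).

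One small correction: your identification ``$\alg^s \cong \alg \otimes_{\mathcal{R}} T$ as a complex'' is not right---the stabilized algebra is the free product (coproduct of $\mathcal{R}$-algebras) $\alg * T$, not the tensor product, since $e_1,e_2$ can be interleaved arbitrarily with the $a_i$ in a word. You clearly know this, since you immediately flag the noncommutativity issue and describe the leftmost-occurrence homotopy. The point that actually makes the argument work is that $\d^s$ preserves the number of $e$'s in a word exactly, so $\alg^s$ splits as a direct sum over $e$-count; the piece with $k\geq 1$ occurrences is, as an $\mathcal{R}$-complex, a direct sum of tensor products $\alg^{\otimes(k+1)}$ tensored with $k$ copies of the contractible two-term complex $\{e_1\to e_2\}$, hence acyclic. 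With that wording in place, your Step~2 goes through cleanly and the assembly in Step~3 is exactly right.
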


We conclude this section with a couple of results about the
consistency of our definitions for the transverse DGA and transverse homology.

\begin{proposition}
In the transverse DGA $(\CTminus_*(B),\dminus(B))$, we have $(\dminus)^2=0$.
\label{prop:d2}
\end{proposition}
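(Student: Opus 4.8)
The plan is to verify $(\dminus)^2 = 0$ generator family by generator family, using the multiplicative (Leibniz) extension of $\dminus$ together with the fact that $\phi_B$ is a chain map in an appropriate sense. First I would record the basic input: since $\dminus(\A) = 0$ and $\phi_B$ acts on $\A$ by an $R[U,V]$-algebra automorphism, $\dminus$ commutes with $\phi_B$ on all of $\alg_n$, so $\dminus(\phi_B(\mathbf{M})) = \phi_B(\dminus(\mathbf{M}))$ for any matrix $\mathbf{M}$ with entries in $\alg_n$. Likewise, because $\Phil_B$ and $\Phir_B$ have entries in $\alg_n$ (the degree-$0$ part), we get $\dminus(\Phil_B) = \dminus(\Phir_B) = 0$, and $\dminus(\Ahat) = \dminus(\Acheck) = 0$ since $\Ahat,\Acheck$ are built from $\A$ with scalar coefficients. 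These facts reduce each computation to matrix algebra over $\alg_n$.

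Next I would go through the six families. For $\B$: $(\dminus)^2(\B) = \dminus(\A - \Lam_B \phi_B(\A) \Lam_B^{-1}) = 0 - \Lam_B \dminus(\phi_B(\A)) \Lam_B^{-1} = -\Lam_B \phi_B(\dminus(\A))\Lam_B^{-1} = 0$. For $\C$ and $\D$: $(\dminus)^2(\C) = \dminus(\Ahat - \Lam_B \Phil_B \Acheck) = 0$ and similarly for $\D$, since every factor is $\dminus$-closed. The substantive cases are $\E$ and $\F$. For $\E$ we have $(\dminus)^2(\E) = \dminus(\hat{\B} - \C - \Lam_B \Phil_B \D) = \dminus(\hat{\B}) - \dminus(\C) - \Lam_B \Phil_B \dminus(\D)$. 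Now $\dminus(\hat{\B})$ is obtained from $\dminus(\B) = \A - \Lam_B\phi_B(\A)\Lam_B^{-1}$ by the same $U,\mu$-weighting that turns $\B$ into $\hat\B$; the point is to check that this weighting is compatible with the decomposition $\Ahat = \A_\ll + \mu U \A_\ur$ and with the matrix identities defining $\Phil_B$, so that $\dminus(\hat{\B})$ matches $\dminus(\C) + \Lam_B\Phil_B\dminus(\D)$ exactly. Substituting $\dminus(\C) = \Ahat - \Lam_B\Phil_B\Acheck$ and $\dminus(\D) = \Acheck - \Ahat\Phir_B\Lam_B^{-1}$, the $\Acheck$ terms should telescope, leaving something of the form $\dminus(\hat\B) - \Ahat + \Lam_B\Phil_B\Ahat\Phir_B\Lam_B^{-1}$, which must vanish. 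The analogous computation handles $\F$.

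The main obstacle is the pair of matrix identities that make the $\E$ and $\F$ computations close up — essentially the assertion that $\hat\B$ transforms under $\dminus$ the way the ``hatted'' analogue of $\A - \Lam_B\phi_B(\A)\Lam_B^{-1}$ should, namely that $\dminus(\hat\B) = \Ahat - \Lam_B\Phil_B\Ahat\Phir_B\Lam_B^{-1}$ and $\dminus(\check\B) = \Acheck - \Acheck\,(\cdots)$, up to the bookkeeping with $\Phil_B,\Phir_B$. Establishing these requires unwinding the definitions of $\Phil_B$ and $\Phir_B$ via the $(n+1)$-strand extension $\phi_B^\ext$: the relation $\phi_B^\ext(a_{i,n+1}) = \sum_\ell (\Phil_B)_{i\ell}a_{\ell,n+1}$ (and the $\Phir_B$ analogue), combined with $\phi_B^\ext$ being an algebra map fixing $\A$, yields precisely the quadratic matrix relation among $\Ahat,\Acheck,\Phil_B,\Phir_B,\Lam_B$ one needs. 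I would isolate this as a lemma about $\Phil_B,\Phir_B$ (it is likely already implicit in the verification that $\dminus(\C)$ and $\dminus(\D)$ are consistent, i.e.\ that $\HTminus_0$ is well-defined), and then the $(\dminus)^2=0$ check for $\E,\F$ becomes a short formal manipulation. The degree bookkeeping ($|\B|=1$, $|\C|=|\D|=1$, $|\E|=|\F|=2$, $U,V,a_{ij}$ in degree $0$) is routine and I would just remark that all terms land in the correct degree.
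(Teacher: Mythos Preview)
Your proposal is correct and follows essentially the same route as the paper: reduce to the $\E$ and $\F$ cases, telescope the $\Acheck$ terms, and isolate the needed identity $\dminus(\hat\B)=\Ahat-\Lam_B\Phil_B\Ahat\Phir_B\Lam_B^{-1}$ (and its $\check{\ }$ analogue) as a lemma about $\Phil_B,\Phir_B$. The paper breaks this last step into the easy observation $\dminus(\hat\B)=\Ahat-\Lam_B\phi_B(\Ahat)\Lam_B^{-1}$ together with the key matrix identity $\phi_B(\Ahat)=\Phil_B\Ahat\Phir_B$ (Lemma~\ref{lem:philphir}), proved by induction on the braid word rather than by unwinding the $(n+1)$-strand extension, but this is the same content.
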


\begin{proposition}
The homology in degree $0$ of the DGAs
$\CTminus_*(B)$, $\CThat_*(B)$, $\CTdoublehat_*(B)$, $\CTinfty_*(B)$ agrees
with $\HTminus_0(B)$, $\HThat_*(B)$, $\HTdoublehat_*(B)$, $\HTinfty_*(B)$ as
given in Definition~\ref{def:deg0}.
\label{prop:deg0}
\end{proposition}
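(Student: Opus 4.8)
The plan is to compute the degree-$0$ homology of $(\CTminus_*(B),\dminus(B))$ directly from the definition of the differential and compare with the quotient ring in Definition~\ref{def:deg0}(1); the three other flavors then follow by applying the corresponding specialization ($U=0,V=1$; $U=V=0$; or tensoring with $R[U^{\pm 1},V^{\pm 1}]$ and substituting $\lambda\mapsto\lambda(U/V)^{-(\sl(B)+1)/2}$), since each of these operations is compatible with taking $H_0$. So the work is entirely in the $\CTminus$ case.

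First I would observe that $\CTminus_*(B)$ is nonnegatively graded, so $H_0(\CTminus_*(B),\dminus(B))$ is simply $\CTminus_0(B)$ modulo the image of $\dminus$ restricted to degree-$1$ elements. The degree-$0$ part $\CTminus_0(B)$ is exactly the tensor algebra $\alg_n$ over $R[U,V]$ on the $a_{ij}$. The degree-$1$ generators are the entries of $\B,\C,\D$, so $H_0 = \alg_n / J$ where $J$ is the two-sided ideal generated by the degree-$0$ parts of $\dminus$ applied to all degree-$1$ elements. Since $\dminus$ is a derivation and lowers degree by $1$, a general degree-$1$ element is an $\alg_n$-linear combination of the generators $b_{ij},c_{ij},d_{ij}$, and $\dminus$ of such an element lies in the two-sided ideal generated by $\dminus(b_{ij}),\dminus(c_{ij}),\dminus(d_{ij})$, i.e.\ by the entries of the matrices $\dminus(\B)$, $\dminus(\C)$, $\dminus(\D)$. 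Hence $J = (\dminus(\B),\dminus(\C),\dminus(\D)) = \bigl(\A - \Lam_B\cdot\phi_B(\A)\cdot\Lam_B^{-1},\ \Ahat - \Lam_B\cdot\Phil_B\cdot\Acheck,\ \Acheck - \Ahat\cdot\Phir_B\cdot\Lam_B^{-1}\bigr)$.

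The remaining point — and the one piece of actual content — is that the first family of relations, coming from $\dminus(\B) = \A - \Lam_B\cdot\phi_B(\A)\cdot\Lam_B^{-1}$, is already implied by the other two, so that $J$ coincides with the ideal $(\Ahat-\Lam_B\cdot\Phil_B\cdot\Acheck,\ \Acheck-\Ahat\cdot\Phir_B\cdot\Lam_B^{-1})$ appearing in Definition~\ref{def:deg0}(1). This is the analogue of the corresponding fact for the framed knot DGA in \cite{Ngframed}: modulo the $\C$- and $\D$-relations one has $\Ahat = \Lam_B\Phil_B\Acheck$ and $\Acheck = \Ahat\Phir_B\Lam_B^{-1}$, whence $\Ahat = \Lam_B\Phil_B\Ahat\Phir_B\Lam_B^{-1}$, and comparing with the way $\phi_B$ acts on the extended algebra $\alg_{n+1}$ (the defining relations for $\Phil_B$ and $\Phir_B$ via $\phi_B^\ext$ on $a_{i,n+1}$ and $a_{n+1,i}$) forces $\phi_B(\A) = \Lam_B^{-1}\A\Lam_B$ in the quotient. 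Concretely I would use that $\phi_B^\ext$ is an algebra automorphism fixing the structure relating the $(n{+}1)$st row and column to the $n\times n$ block, extract the identity $\phi_B(\Ahat) = \Phir_B{}^{-1}\,\Ahat$-type relations (or their $\A_\ll,\A_\ur$ refinements) already established in \cite{Ngframed}, and specialize the $U,V$-enhanced versions back; since setting $U=V=1$ recovers precisely the \cite{Ngframed} situation and the $U,V$ enter only as the scalars in $\Ahat = \A_\ll+\mu U\A_\ur$ and $\Acheck = V\A_\ll+\mu\A_\ur$, the same manipulations go through verbatim over $R[U,V]$.

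I expect the main obstacle to be purely bookkeeping: carefully checking that the matrix identity "$\{\C,\D\text{-relations}\} \Rightarrow \{\B\text{-relation}\}$'' survives the refinement of $\A$ into the two pieces $\Ahat,\Acheck$ with their asymmetric $U,V,\mu$ weights, i.e.\ that one does not need $\Ahat$ and $\Acheck$ to be invertible in directions where they are not. This should be handled exactly as in \cite{Ngframed} by working with $\A_\ll$ (lower-triangular, unipotent up to sign, hence invertible) and $\A_\ur$ separately, so that all inverses used are of honest units in $\Mat_n(\alg_n\otimes R[U,V])$. Once that is in place, the equality of ideals is immediate and the proposition follows for all four flavors.
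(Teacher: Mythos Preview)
Your overall strategy matches the paper's: reduce to showing that the entries of $\dminus(\B)=\A-\Lam_B\,\phi_B(\A)\,\Lam_B^{-1}$ already lie in the ideal generated by the entries of $\dminus(\C)$ and $\dminus(\D)$. The key input is exactly Lemma~\ref{lem:philphir}, namely $\phi_B(\Ahat)=\Phil_B\cdot\Ahat\cdot\Phir_B$ and $\phi_B(\Acheck)=\Phil_B\cdot\Acheck\cdot\Phir_B$ (equivalently the $\A_\geq,\A_\leq$ versions). Your formula ``$\phi_B(\Ahat)=\Phir_B{}^{-1}\,\Ahat$-type relations'' is garbled; once you state the correct identity, the argument becomes a one-liner rather than a delicate extraction from $\phi_B^\ext$.

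Concretely, the paper just writes
\[
\Ahat-\Lam_B\,\phi_B(\Ahat)\,\Lam_B^{-1}
=(\Ahat-\Lam_B\Phil_B\Acheck)+\Lam_B\Phil_B\cdot(\Acheck-\Ahat\,\Phir_B\Lam_B^{-1}),
\]
and similarly with $\Acheck$ in place of $\Ahat$. Below the diagonal the $\Ahat$-identity coincides entrywise with the $\A$-identity, and above the diagonal the $\Acheck$-identity coincides with $\mu$ times the $\A$-identity, with $\mu$ a unit in $R$; the diagonal entries are zero. So your anticipated obstacle about invertibility of $\Ahat$ or $\Acheck$ is a red herring: no matrix inverses beyond units in $R$ are ever used, and there is no need to split into $\A_\geq,\A_\leq$ to make anything invertible.
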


Both of these results are easy consequences of the following algebraic
lemma, which is used throughout this paper.

\begin{lemma} (cf.\ \cite[Prop.~4.7]{NgKCH1}, \cite[Prop.~2.10]{NgKCH2})
Let $\phi_B(\A_\geq),\phi_B(\A_\leq)$ denote the images of
$\A_\geq,\A_\leq$ under $\phi$; in particular, these are lower and upper
triangular matrices, respectively, with $-1$ along the diagonal. Then
we have
\label{lem:philphir}
\begin{align*}
\phi_B(\A_\geq) &= \Phil_B \cdot \A_\geq \cdot \Phir_B \\
\phi_B(\A_\leq) &= \Phil_B \cdot \A_\leq \cdot \Phir_B.
\end{align*}
As a consequence, we also have
\begin{align*}
\phi_B(\Ahat) &= \Phil_B \cdot \Ahat \cdot \Phir_B \\
\phi_B(\Acheck) &= \Phil_B \cdot \Acheck \cdot \Phir_B.
\end{align*}
\end{lemma}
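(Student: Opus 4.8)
The plan is to prove the two displayed identities for $\phi_B(\A_\geq)$ and $\phi_B(\A_\leq)$ by induction on the braid word length of $B$, with the base case $B = \sigma_k^{\pm 1}$ handled by direct computation, and then deduce the statements for $\Ahat$ and $\Acheck$ purely formally. First I would record how $\Phil$ and $\Phir$ behave under composition. From the defining relations
\[
\phi_B^\ext(a_{i,n+1}) = \sum_\ell (\Phil_B)_{i\ell}\, a_{\ell,n+1},
\qquad
\phi_B^\ext(a_{n+1,i}) = \sum_\ell a_{n+1,\ell}\, (\Phir_B)_{\ell i},
\]
applying $\phi_{B_1}^\ext$ after $\phi_{B_2}^\ext$ and using that $\phi$ is a homomorphism $B_n \to \Aut\alg_n$ gives the cocycle-type rules
\[
\Phil_{B_1 B_2} = \Phil_{B_1}\cdot \phi_{B_1}(\Phil_{B_2}),
\qquad
\Phir_{B_1 B_2} = \phi_{B_1}(\Phir_{B_2})\cdot \Phir_{B_1},
\]
where $\phi_{B_1}$ is applied entrywise to the matrices (here one must be slightly careful: the entries of $\Phil_{B_2},\Phir_{B_2}$ lie in $\alg_n$, so applying $\phi_{B_1}$ makes sense). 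This is the one genuinely fiddly bookkeeping point, and I would set it down carefully once at the start.

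Granting these composition rules, the inductive step is immediate: if $\phi_{B_i}(\A_\geq) = \Phil_{B_i}\cdot\A_\geq\cdot\Phir_{B_i}$ for $i=1,2$, then
\[
\phi_{B_1 B_2}(\A_\geq) = \phi_{B_1}\bigl(\Phil_{B_2}\cdot\A_\geq\cdot\Phir_{B_2}\bigr)
= \phi_{B_1}(\Phil_{B_2})\cdot\phi_{B_1}(\A_\geq)\cdot\phi_{B_1}(\Phir_{B_2}),
\]
and substituting the $i=1$ hypothesis for the middle factor and regrouping via the composition rules yields $\Phil_{B_1 B_2}\cdot\A_\geq\cdot\Phir_{B_1 B_2}$; the same computation works for $\A_\leq$. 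So everything reduces to the base case $B=\sigma_k^{\pm1}$. There I would simply write out $\Phil_{\sigma_k}$ and $\Phir_{\sigma_k}$ from the formula for $\phi_{\sigma_k}$ applied to $a_{i,n+1}$ and $a_{n+1,i}$ — these are elementary-type matrices differing from the identity only in rows/columns $k,k+1$, with entries $1$, $-1$, and $\pm a_{k+1,k}$ or $\pm a_{k,k+1}$ — and check the matrix identity $\phi_{\sigma_k}(\A_\geq) = \Phil_{\sigma_k}\A_\geq\Phir_{\sigma_k}$ entrywise. This is a finite check involving only the $2\times 2$ (or $3\times 3$) block where the action is nontrivial, and it mirrors the analogous verification in \cite[Prop.~4.7]{NgKCH1}; I would also note $\sigma_k^{-1}$ is handled by the same computation or by inverting. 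Since $\A_\geq$ and $\A_\leq$ are the two ``halves'' of $\A$, the verification for one essentially doubles as the verification for the other.

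For the final two identities, observe that $\Ahat = \A_\geq + \mu U\A_\leq$ and $\Acheck = V\A_\geq + \mu\A_\leq$ are $R[U,V]$-linear combinations of $\A_\geq$ and $\A_\leq$; since $\phi_B$ fixes $\mu,U,V$ and matrix multiplication by $\Phil_B,\Phir_B$ is $R[U,V]$-bilinear, the identities $\phi_B(\Ahat) = \Phil_B\Ahat\Phir_B$ and $\phi_B(\Acheck) = \Phil_B\Acheck\Phir_B$ follow by taking the corresponding linear combinations of the two identities already proved. The main obstacle is really just the careful statement and proof of the composition rules for $\Phil$ and $\Phir$ — once those are in hand, the rest is a short induction plus a bounded base-case computation.
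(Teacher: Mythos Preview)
Your approach is exactly the paper's: verify for $\sigma_k$, deduce $\sigma_k^{-1}$, then induct on word length via the chain rule for $\Phil,\Phir$, and finally take $R[U,V]$-linear combinations to get the $\Ahat,\Acheck$ statements.

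There is, however, a factor-ordering slip in your stated composition rules. With the convention $\phi_{B_1B_2}=\phi_{B_1}\circ\phi_{B_2}$, unwinding the definition of $\Phil$ actually gives
\[
\Phil_{B_1B_2} = \phi_{B_1}(\Phil_{B_2})\cdot\Phil_{B_1},
\qquad
\Phir_{B_1B_2} = \Phir_{B_1}\cdot\phi_{B_1}(\Phir_{B_2}),
\]
i.e., the opposite order from what you wrote (cf.\ \cite[Prop.~4.4]{NgKCH1} and the chain rule displayed in the proof of Lemma~\ref{lem:eens} in this paper). Your inductive step in fact produces
\[
\phi_{B_1}(\Phil_{B_2})\cdot\Phil_{B_1}\cdot\A_\geq\cdot\Phir_{B_1}\cdot\phi_{B_1}(\Phir_{B_2}),
\]
which regroups correctly under the \emph{correct} rules above but not under the ones you stated; since $\alg_n$ is noncommutative this is not a harmless swap. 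Once the ordering is fixed, the argument is complete and coincides with the paper's proof.
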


\begin{proof}
This lemma is implicit in the proof of Theorem~2.10 in \cite{Ngframed}, but
can be explicitly proven by induction by exactly following the proof
of Proposition~4.7 in \cite{NgKCH1}: first verify the identities explicitly
for $B = \sigma_k$, then use the chain rule for $\Phil,\Phir$ to
deduce the identities for $B = \sigma_k^{-1}$, and then for $B =
B_1B_2$ assuming the identities hold for $B=B_1$ and $B=B_2$.
\end{proof}

\begin{proof}[Proof of Proposition~\ref{prop:d2}]
It suffices to show that $(\dminus)^2(\E) = (\dminus)^2(\F) = 0$.
From the definitions of $\hat{\B},\check{\B},\dminus(\B)$, we see that
$\dminus(\hat{\B}) = \Ahat - \Lam_B \cdot \phi_B(\Ahat) \cdot
\Lam_B^{-1}$ and $\dminus(\check{\B}) = \Acheck - \Lam_B \cdot
\phi_B(\Acheck) \cdot \Lam_B^{-1}$. The desired result is then a
consequence of Lemma~\ref{lem:philphir}.
\end{proof}

\begin{proof}[Proof of Proposition~\ref{prop:deg0}]
(See Proposition~\ref{prop:modifiedDGA} for another proof.)
Since the transverse DGA is supported in nonnegative degree, it
suffices to check that the entries of the matrix
$\A - \Lam_B \cdot \phi_B(\A) \cdot \Lam_B^{-1}$ are in the ideal of
$\alg_n$ generated by the entries of
$\Ahat -
\Lam_B\cdot\Phil_B\cdot\Acheck$ and $\Acheck - \Ahat \cdot \Phir_B
\cdot \Lam_B^{-1}$. Now by Lemma~\ref{lem:philphir}, we have
\begin{align*}
\Ahat - \Lam_B \cdot \phi_B(\Ahat) \cdot \Lam_B^{-1} &=
(\Ahat -
\Lam_B\cdot\Phil_B\cdot\Acheck) + (\Lam_B \cdot \Phil_B) \cdot
(\Acheck - \Ahat \cdot \Phir_B \cdot \Lam_B^{-1}) \\
\Acheck - \Lam_B \cdot \phi_B(\Acheck) \cdot \Lam_B^{-1} &=
(\Acheck - \Ahat \cdot \Phir_B
\cdot \Lam_B^{-1}) + (\Ahat -
\Lam_B\cdot\Phil_B\cdot\Acheck) \cdot (\Phir_B \cdot \Lam_B^{-1}).
\end{align*}
But the matrix $\A - \Lam_B \cdot \phi_B(\A) \cdot \Lam_B^{-1}$ is
zero along the diagonal,
agrees below the diagonal with the first of these matrices, and agrees
above
the diagonal with $\mu$ times the second of these matrices.
\end{proof}

\section{Proofs of Invariance}
\label{sec:proofs}

In this section, we prove the main invariance theorems,
Theorems~\ref{thm:main} and~\ref{thm:infty}, along with some other
isomorphism results. We first establish some
auxiliary results giving alternate forms for the transverse DGA in
Section~\ref{ssec:modifiedDGA}. In Section~\ref{ssec:invproof1}, we
use these to prove the invariance of degree-$0$
transverse homology, which is technically a corollary of the main
invariance theorems but has a more streamlined proof,
and which is all we need to deduce the applications in
Section~\ref{sec:applications}. In Section~\ref{ssec:invproof2},
we present the full proofs of invariance, which are somewhat messy and
may probably be skipped with impunity by most readers. Finally, in
Section~\ref{ssec:eens}, we describe the version of the transverse DGA
presented in \cite{EENStransverse}, which is slightly different from
the transverse DGA in this paper, and prove that the two are stable
tame isomorphic.

\subsection{Equivalent forms for the transverse DGA}
\label{ssec:modifiedDGA}

In this subsection, we prove a few auxiliary results that show that
various DGAs related to the transverse DGA are stable tame
isomorphic. These results will be used in the invariance proofs in
Sections~\ref{ssec:invproof1}, \ref{ssec:invproof2}, and
\ref{ssec:eens}, and also in Section~\ref{sec:symmetries}.

First, it will sometimes be useful to replace the diagonal matrix $\Lam_B$ in
the definition of the transverse DGA by some other diagonal
matrix. Here we prove that the transverse DGA depends only on the
determinant of $\Lam_B$ rather than its particular entries.
Recall that we only consider the case of knots in this paper; the
corresponding result for multi-component links states that the
transverse DGA depends only on the determinants of the submatrices of
$\Lam_B$ corresponding to the various link components.

\begin{proposition}
Let $B$ be a braid.
\label{prop:lam}
Replace $\Lam_B$ in the definition of the
transverse DGA of $B$,
Definition~\ref{def:transDGA}, by an arbitrary diagonal matrix $\Lam$ with
invertible determinant. If the closure of $B$ is a single-component
knot, then the transverse DGA, up to tame isomorphism, depends
only on $\det\Lam$.
\end{proposition}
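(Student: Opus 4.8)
The plan is to reduce the claim to the following assertion: if $\Lam$ and $\Lam'$ are two diagonal matrices with $\det\Lam = \det\Lam'$, then the transverse DGAs defined using $\Lam$ and $\Lam'$ are tamely isomorphic. First I would observe that since the closure of $B$ is a knot, the braid $B$ acts transitively on the $n$ strands, i.e.\ the underlying permutation of $B$ is an $n$-cycle. This is the key structural input: it means that the diagonal matrix $\Lam_B$ only ever appears in the differential through its ``twisted conjugation'' action $\mathbf{M} \mapsto \Lam_B \cdot \phi_B(\mathbf{M}) \cdot \Lam_B^{-1}$ (for $\B$) and through the two-term expressions $\Ahat - \Lam_B\Phil_B\Acheck$ and $\Acheck - \Ahat\Phir_B\Lam_B^{-1}$ (for $\C,\D$, and then $\E,\F$), and in all of these the entries of $\Lam_B$ enter only in the combinations $(\Lam_B)_{ii}/(\Lam_B)_{jj}$ together with one overall factor coming from how $\phi_B$ permutes indices.

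Next I would set up the tame isomorphism explicitly. Write $\Lam = \diag(t_1,\dots,t_n)$ and $\Lam' = \diag(t_1',\dots,t_n')$ with $\prod t_i = \prod t_i'$. Define an $R[U,V]$-algebra automorphism $\psi$ of $\CTminus_*(B)$ by rescaling each generator by an appropriate monomial in the $t_i,t_i'$: concretely, send $a_{ij} \mapsto (t_i't_j/(t_it_j'))\, a_{ij}$ or a similar scaling, and analogously for $b_{ij}$, and for $c_{ij}, d_{ij}, e_{ij}, f_{ij}$ rescale by the matching factors so that $\psi$ intertwines the two differentials. Here I would exploit the fact, provable by the same induction as in Lemma~\ref{lem:philphir} (verify on $\sigma_k$, extend to $\sigma_k^{-1}$, then to products), that under such a diagonal rescaling the matrices $\Phil_B, \Phir_B$ transform by conjugation by the corresponding diagonal matrix; combined with the cyclic-permutation property this lets one absorb the change from $\Lam$ to $\Lam'$ entirely into $\psi$. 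One then checks $\psi$ is invertible (the scaling factors are units in $R[U,V]$ since each $t_i/t_i'$ is, once one arranges the ratios to be monomials in $\lambda,\mu,U,V$) and that $\psi \circ \dminus_{\Lam} = \dminus_{\Lam'} \circ \psi$ on the six families $\A,\B,\C,\D,\E,\F$; the last of these is a direct matrix computation using Lemma~\ref{lem:philphir} and the transformation rule for $\Phil_B,\Phir_B$. Since $\psi$ is a composition of elementary automorphisms (each rescaling of a single generator is elementary) followed by the identity relabeling, it is a tame isomorphism.

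The main obstacle I anticipate is bookkeeping: pinning down the exact rescaling exponents so that $\psi$ commutes with the differential on all six matrix-valued generators simultaneously, and verifying that the rescaling factors are genuinely units in the relevant base ring rather than merely formal ratios — this is where the hypothesis that the closure is a \emph{single} knot (so there is effectively one ``$\lambda$'' and the permutation is a single cycle) is used, and it is exactly the point that fails in the multi-component case, forcing the dependence on the several sub-determinants mentioned in the statement. A secondary subtlety is that $\phi_B$ permutes the first index nontrivially, so the naive ``$a_{ij} \mapsto (t_i/t_i')(t_j'/t_j) a_{ij}$'' must be checked to be compatible with how $\Lam_B$ sits in $\phi_B(\A) = \Lam_B \phi_B(\A) \Lam_B^{-1}$-type expressions after relabeling; the cyclic structure of the permutation is what guarantees a consistent global choice of exponents exists. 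Once the rescaling is correctly identified, the verification is routine.
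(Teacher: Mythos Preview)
Your proposal is correct and follows essentially the same approach as the paper: conjugate all six matrix generators by a diagonal matrix $\Del(v)$, use the transformation rule $\Phil_B(\Del(v)\A\Del(v)^{-1}) = \Del(s(B)v)\cdot\Phil_B(\A)\cdot\Del(v)^{-1}$ (and the analogous one for $\Phir_B$) to see that this intertwines the differentials for $\Lam$ and $\tilde\Lam = \Del(v)\cdot\Del(s(B)v)^{-1}\cdot\Lam$, and then use that $s(B)$ is an $n$-cycle to solve for $v$ given any target $\tilde\Lam$ with $\det\tilde\Lam=\det\Lam$. The paper's formulation resolves your anticipated bookkeeping obstacle cleanly: your tentative scaling $a_{ij}\mapsto (t_i't_j/(t_it_j'))a_{ij}$ is indeed not quite right, and should be replaced by $a_{ij}\mapsto (v_i/v_j)a_{ij}$ where $v$ is determined (up to an overall scalar) by $t_i'/t_i = v_i/v_{s(B)(i)}$.
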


\begin{proof}
This is a straightforward extension of Proposition 3.1 in
\cite{Ngframed}, whose proof we follow here. For $B\in
B_n$, let $s(B)\in S_n$ denote
the permutation of $\{1,\ldots,n\}$ corresponding to $B$ under the
usual map $B_n \to S_n$. If $v$ is a vector of length $n$, let
$\Del(v)$ denote the diagonal $n\times n$ matrix whose diagonal
entries are $v$, and write
$s(B)v$ for the vector that results from permuting the entries of $v$
by $s(B)$. We recall \cite[Lemma~3.2]{Ngframed}: if we define $\tilde{\A}
= \Del(v) \cdot \A \cdot \Del(v)^{-1}$, then
\begin{align*}
\Phil_B(\tilde{\A}) &= \Del(s(B)v) \cdot \Phil_B(\A) \cdot
\Del(v)^{-1} \\
\Phir_B(\tilde{\A}) &= \Del(v) \cdot \Phir_B(\A) \cdot
\Del(s(B)v)^{-1}.
\end{align*}
Here, as in \cite{NgKCH1,Ngframed}, $\Phil_B(\tilde{\A}),\Phir_B(\tilde{\A})$ denote the result of replacing the generators $a_{ij}$ by $\tilde{\A}_{ij}$ (for all $i,j$) in the matrices $\Phil_B,\Phir_B$.

Now given any vector $v$ of length $n$ with invertible entries, let
$(\CTminus,\d^-)$ and $(\widetilde{\mathit{CT}}^{ -},\tilde{\d}^-)$ be the transverse DGAs
for $B$ with $\Lam_B$ replaced by $\Lam$ and $\tilde{\Lam} = \Del(v)
\cdot\Del(s(B)v)^{-1}\cdot \Lam$,
respectively, for some diagonal matrix $\Lam$ with invertible diagonal
entries. Then the identification of the algebras $\CTminus$ with
$\widetilde{\mathit{CT}}^{ -}$
given in matrix form by $\tilde{\A} = \Del(v) \A \Del(v)^{-1}$,
$\tilde{\B} = \Del(v) \B \Del(v)^{-1}$,
$\tilde{\C} = \Del(v) \C \Del(v)^{-1}$,
$\tilde{\D} = \Del(v) \D \Del(v)^{-1}$,
$\tilde{\E} = \Del(v) \E \Del(v)^{-1}$,
$\tilde{\F} = \Del(v) \F \Del(v)^{-1}$ provides a tame isomorphism
between $(\CTminus,\d^-)$ and $(\widetilde{\mathit{CT}}^{ -},\tilde{\d}^-)$:
\begin{align*}
\tilde{\d}^-(\tilde{\C}) &=
\hat{\tilde{\A}} - \tilde{\Lam} \cdot \Phil_B(\tilde{\A}) \cdot
\check{\tilde{\A}} \\
&= \Del(v) \cdot \hat{\A} \cdot \Del(v)^{-1} -
\tilde{\Lam} \cdot \Del(s(B)v) \cdot \Phil_B(\A) \cdot \check{\A} \cdot
\Del(v)^{-1} \\
&= \Del(v) \cdot (\hat{\A} - \Lam \cdot \Phil_B(\A) \cdot
\check{\A}) \cdot \Del(v)^{-1} \\
&= \d^-(\tilde{\C}),
\end{align*}
with similar computations for the differentials of $\B,\D,\E,\F$.

Thus, in the definition of the transverse DGA, we can replace
$\Lam_B$ by any matrix of the form $\Del(v) \cdot \Del(s(B)v)^{-1}
\cdot \Lam_B$, up to tame isomorphism. Since the closure of $B$ is a
knot, $s(B)$ is an $n$-cycle; it follows that for any diagonal
$\Lam$ with the same determinant as $\Lam_B$, one can choose a
vector $v$ with $\Lam = \Del(v) \cdot \Del(s(B)v)^{-1}
\cdot \Lam_B$. (We need $\Lam_{ii} = v_i v_{s(B)(i)}^{-1}
(\Lam_B)_{ii}$ for $i=1,\ldots,n$, and this formula allows us to define
$v$ up to an overall multiplicative factor.) The proposition follows.
\end{proof}

\begin{corollary}
Let $B\in B_n$ be a braid whose closure is a knot, and let $\Lam$ be a
diagonal $n\times n$ matrix with invertible determinant. Up to
isomorphism, the
degree-$0$ transverse homology
\[
\HTminus_0(B) = \alg_n \, / \, (\Ahat -
\Lam\cdot\Phil_B\cdot\Acheck,
\Acheck - \Ahat \cdot \Phir_B \cdot \Lam^{-1})
\]
depends only on $\det\Lam$, with corresponding results for the other
flavors of degree-$0$ transverse homology.
\label{cor:lam}
\end{corollary}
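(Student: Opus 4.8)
The plan is to deduce the corollary directly from Proposition~\ref{prop:lam} by passing from the full transverse DGA to its degree-$0$ homology. By Proposition~\ref{prop:deg0}, the degree-$0$ homology of $(\CTminus_*(B),\dminus(B))$ is exactly $\HTminus_0(B)$ as defined via the matrices $\Ahat-\Lam_B\cdot\Phil_B\cdot\Acheck$ and $\Acheck-\Ahat\cdot\Phir_B\cdot\Lam_B^{-1}$; the same computation with $\Lam_B$ replaced by an arbitrary diagonal matrix $\Lam$ with invertible determinant shows that the degree-$0$ homology of the modified transverse DGA (the one appearing in Proposition~\ref{prop:lam}) is the quotient algebra displayed in the statement of the corollary. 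Since Proposition~\ref{prop:lam} asserts that, for a braid whose closure is a knot, this modified transverse DGA depends up to tame isomorphism only on $\det\Lam$, and since tame isomorphism is in particular a quasi-isomorphism by Proposition~\ref{prop:quasi}, the degree-$0$ homology depends up to $R[U,V]$-algebra isomorphism only on $\det\Lam$. This gives the claim for $\HTminus_0$.

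For the remaining flavors, I would simply apply the appropriate specialization. Setting $U=0,V=1$ (resp.\ $U=V=0$) in the tame isomorphism of Proposition~\ref{prop:lam} yields a tame isomorphism of the corresponding hat (resp.\ double-hat) modified DGAs over $R$, whose degree-$0$ homologies are $\HThat_0(B)$ and $\HTdoublehat_0(B)$ computed with $\Lam$ in place of $\Lam_B$; the determinant-dependence is inherited. For the infinity version one works over $R[U^{\pm1},V^{\pm1}]$ throughout and uses $\Lam_B'$ in place of $\Lam_B$, but the argument is identical: Proposition~\ref{prop:lam} is stated for an arbitrary diagonal matrix with invertible determinant, so it applies verbatim after tensoring with $R[U^{\pm1},V^{\pm1}]$, and the degree-$0$ homology of the resulting modified DGA is the quotient defining $\HTinfty_0(B)$.

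The only point requiring a small amount of care—and the step I would single out as the main obstacle, though it is minor—is verifying that the tame isomorphism produced in the proof of Proposition~\ref{prop:lam} actually descends to an isomorphism of the degree-$0$ quotient \emph{algebras}, not merely of homology modules. Here one notes that the tame isomorphism of Proposition~\ref{prop:lam} is, on the degree-$0$ generators, given explicitly by the algebra automorphism $\tilde{\A}=\Del(v)\,\A\,\Del(v)^{-1}$ of $\alg_n$; this conjugation carries the ideal $(\Ahat-\Lam\cdot\Phil_B\cdot\Acheck,\ \Acheck-\Ahat\cdot\Phir_B\cdot\Lam^{-1})$ to the ideal with $\Lam$ replaced by $\Del(v)\cdot\Del(s(B)v)^{-1}\cdot\Lam$, exactly as in the displayed computation in that proof (with $\D=\C=0$ degenerations handled by the analogous identity for $\dminus(\C)$ and $\dminus(\D)$). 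Hence the induced map on the quotient is a well-defined $R[U,V]$-algebra isomorphism, and the fact that $s(B)$ is an $n$-cycle (since the closure of $B$ is a knot) lets us realize any prescribed determinant, exactly as in Proposition~\ref{prop:lam}. The corollary follows.
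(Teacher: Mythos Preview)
Your proposal is correct and follows the same approach the paper intends: the corollary is stated without a separate proof because it is an immediate consequence of Proposition~\ref{prop:lam}, and your reduction via Propositions~\ref{prop:deg0} and~\ref{prop:quasi}---together with the observation that the tame isomorphism restricts in degree $0$ to the explicit conjugation $\tilde{\A}=\Del(v)\cdot\A\cdot\Del(v)^{-1}$ of $\alg_n$---is exactly the content of that implication.
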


Next, there are several variants of the transverse DGA, akin to the
``modified framed DGA'' from \cite{Ngframed}, that are stable tame
isomorphic to the transverse DGA but have fewer generators. It will
occasionally be convenient to use one of the variants instead of the
original transverse DGA.

\begin{proposition}
Let $B$ be a braid.
\label{prop:modifiedDGA}
The following DGAs over $R[U,V]$ are stable tame isomorphic:
\begin{enumerate}
\item \label{DGA1}
the transverse DGA from Definition~\ref{def:transDGA};
\item
the DGA with generators and differential: \label{DGA3}
\begin{itemize}
\item
$\{a_{ij}\}_{1\leq i\neq j\leq n}$ of degree $0$ with $\d^-(\A) = 0$,
\item
$\{c_{ij}\}_{1\leq i,j\leq n}$ of degree $1$ with
$\d^-(\C) = \Ahat-\Lam_B \cdot \Phil_B \cdot \Acheck$,
\item
$\{d_{ij}\}_{1\leq i,j\leq n}$ of degree $1$ with
$\d^-(\D) = \Acheck-\Ahat\cdot\Phir_B\cdot\Lam_B^{-1}$,
\item
$\{e_{ij}\}_{1\leq i\leq j\leq n}$ of degree $2$ with
$\d^-(e_{ii}) = (\C+\Lam_B\cdot\Phil_B\cdot\D)_{ii}$ and
\[
\d^-(e_{ij}) = (\C-U \D+\Lam_B\cdot\Phil_B\cdot\D-
U \C\cdot\Phir_B\cdot\Lam_B^{-1})_{ij}
\]
for $i<j$,
\item
$\{f_{ij}\}_{1\leq j\leq i\leq n}$ of degree $2$ with
$\d^-(f_{ii}) = (\D+\C\cdot\Phir_B\cdot\Lam_B^{-1})_{ii}$ and
\[
\d^-(f_{ij}) = (\D-V \C+\C\cdot\Phir_B\cdot\Lam_B^{-1} - V \Lam_B\cdot\Phil_B\cdot\D)_{ij}
\]
for $j<i$;
\end{itemize}
\item
the DGA with generators and differential: \label{DGA2}
\begin{itemize}
\item
$\{a_{ij}\}_{1\leq i\neq j\leq n}$ of degree $0$ with $\d^-(\A) = 0$,
\item
$\{b_{ij}\}_{1\leq i\neq j\leq n}$ of degree $1$ with $\d^-(\B) =
\Lam_B^{-1} \cdot \A \cdot \Lam_B - \phi_B(\A)$,
\item
$\{d_{ij}\}_{1\leq i,j\leq n}$ of degree $1$ with
$\d^-(\D) = \Acheck \cdot \Lam_B -\Ahat \cdot \Phir_B$,
\item
$\{f_{ij}\}_{1\leq i,j\leq n}$ of degree $2$ with
\[
\d^-(\F) =
\check{\B}\cdot(\Phir_B)^{-1} - \hat{\B} \cdot \Lam_B^{-1} +
\Phil_B \cdot \D \cdot \Lam_B^{-1} - \Lam_B^{-1} \cdot \D \cdot
(\Phir_B)^{-1}.
\]
\end{itemize}
\end{enumerate}
\end{proposition}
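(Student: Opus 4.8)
The plan is to establish the chain of stable tame isomorphisms
$(\ref{DGA1}) \sim (\ref{DGA3}) \sim (\ref{DGA2})$ by repeatedly applying
the standard "cancellation lemma" for DGAs: if a DGA has a generator $x$
of degree $k$ and another generator $y$ of degree $k-1$ with
$\d^-(x) = y + (\text{terms not involving } y)$, then after a tame
isomorphism one may assume $\d^-(x) = y$ and the sub-DGA on the remaining
generators is stable tame isomorphic to the original. (This is exactly
the mechanism used in \cite{Ngframed} to pass from the framed knot DGA
to its modified form, and is implicit in Chekanov \cite{Ch02}.)

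\medskip

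\emph{Step 1: $(\ref{DGA1}) \sim (\ref{DGA3})$.} Starting from the
transverse DGA of Definition~\ref{def:transDGA}, I would use the
generators $b_{ij}$ (degree $1$) together with the off-diagonal entries
of the degree-$2$ generators $\E$ and $\F$ to cancel. Observe that
$\dminus(\E) = \hat{\B} - \C - \Lam_B \Phil_B \D$, so for $i < j$ the
entry $\dminus(e_{ij})$ contains the term $\mu U\, b_{ij}$ (from
$\hat{\B}$), while for $i > j$ it contains $b_{ij}$; similarly
$\dminus(\F) = \check{\B} - \D - \C \Phir_B \Lam_B^{-1}$ has
$\dminus(f_{ij})$ containing $\mu b_{ij}$ for $i<j$ and $V b_{ij}$ for
$i>j$. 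Since $U, V, \mu$ are invertible in $R[U,V]$ only after we are
careful — actually $U,V$ are \emph{not} invertible in $R[U,V]$, so the
cancellation must be arranged so that the coefficient of the cancelled
$b_{ij}$ is a unit. The right bookkeeping is: cancel $b_{ij}$ (for
$i>j$) against $e_{ij}$ using the coefficient $1$, and cancel $b_{ij}$
(for $i<j$) against $f_{ij}$ using the coefficient $\mu$, which is a
unit. After eliminating all $b_{ij}$, one is left with the $a$'s, $c$'s,
$d$'s, the diagonal $e_{ii}, f_{ii}$, and the off-diagonal
$e_{ij}$ ($i<j$), $f_{ij}$ ($j<i$) — matching the generator set of
(\ref{DGA3}). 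The new differentials on the surviving $e_{ij}, f_{ij}$
are computed by substituting, from $\dminus(\E)=0$ and $\dminus(\F)=0$
after cancellation, the expressions $b_{ij} = (\C + \Lam_B\Phil_B\D)_{ij}$
(for $i>j$) and $\mu b_{ij} = (\D + \C\Phir_B\Lam_B^{-1})_{ij}$
(for $i<j$) back into the \emph{other} family; the two mixed terms
$-U\D$, $-U\C\Phir_B\Lam_B^{-1}$ in $\dminus(e_{ij})$ and $-V\C$,
$-V\Lam_B\Phil_B\D$ in $\dminus(f_{ij})$ arise precisely from the
$\mu U$ and $V$ factors in $\hat{\B}, \check{\B}$ relative to the plain
factor on the cancelled generator. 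This is a routine but somewhat
fiddly matrix computation; I would organize it by writing
$\hat{\B} = \B_> + \mu U\,\B_<$ and $\check{\B} = V\B_> + \mu\,\B_<$
(decomposition into strictly-lower and strictly-upper parts) and tracking
the two triangular pieces separately.

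\medskip

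\emph{Step 2: $(\ref{DGA3}) \sim (\ref{DGA2})$.} Here I would first
cancel the diagonal entries: $\dminus(e_{ii}) = (\C+\Lam_B\Phil_B\D)_{ii}$
begins with $c_{ii}$ (since $(\C)_{ii} = c_{ii}$ and
$(\Lam_B\Phil_B\D)_{ii}$ involves products, whose lowest-order part —
recall the $a$'s are degree $0$, not scalars, so "lowest order" here
means after the tame change of variables that isolates a single
generator — one checks $c_{ii}$ appears linearly with unit coefficient).
Cancelling $e_{ii}$ against $c_{ii}$, and similarly $f_{ii}$ against
$d_{ii}$ (using $\dminus(f_{ii}) = (\D + \C\Phir_B\Lam_B^{-1})_{ii}$,
which begins with $d_{ii}$), removes the diagonal $c,d,e,f$. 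Next, use
the generators $c_{ij}$ themselves: from $\dminus(\C) = \Ahat -
\Lam_B\Phil_B\Acheck$ one can, after the previous step has eliminated the
diagonal, solve for the off-diagonal $c_{ij}$ and cancel them against
suitable combinations — this is how the $\C$-family disappears and the
relation $\dminus(\D) = \Acheck-\Ahat\Phir_B\Lam_B^{-1}$ gets rewritten
(multiplying on the right by $\Lam_B$, which is legitimate since $\Lam_B$
is invertible) as the relation $\dminus(\D) = \Acheck\Lam_B -
\Ahat\Phir_B$ appearing in (\ref{DGA2}); the surviving $\F$-differential
$\check{\B}(\Phir_B)^{-1} - \hat{\B}\Lam_B^{-1} + \Phil_B\D\Lam_B^{-1}
- \Lam_B^{-1}\D(\Phir_B)^{-1}$ is obtained by substituting the solved-for
$\C$ back into $\dminus(\F)$ and simplifying with Lemma~\ref{lem:philphir}
(to handle $\phi_B(\Ahat)$, $\phi_B(\Acheck)$) together with the chain
rule $\Phil_B\Phir_B = \phi_B$ applied to the identity matrix — more
precisely, with the relations relating $\Phil, \Phir$ and their inverses.
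The reintroduction of the $b_{ij}$ generators with
$\dminus(\B) = \Lam_B^{-1}\A\Lam_B - \phi_B(\A)$ is bookkeeping: it's
the same $\B$ as in Definition~\ref{def:transDGA} up to conjugating the
defining relation by $\Lam_B$ (again using invertibility of $\Lam_B$),
and one checks this conjugation is realized by a tame isomorphism exactly
as in the proof of Proposition~\ref{prop:lam}.

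\medskip

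\emph{Main obstacle.} The genuinely delicate point is \textbf{Step 2}:
cancelling the $\C$-family is not literally an application of the naive
cancellation lemma, because $\dminus(c_{ij})$ is not of the form
"$c_{ij}$ plus stuff not involving $c_{ij}$" — the entries of
$\Lam_B\Phil_B\Acheck$ can involve $c$'s indirectly only through...
actually they don't involve $c$'s at all ($\Phil_B, \Acheck$ are built
from $a$'s), so $\dminus(\C)$ \emph{is} linear in the $c$'s; the subtlety
is rather that one is cancelling an entire \emph{matrix family} at once,
so one must order the individual cancellations so that no generator is
used both as the thing-being-cancelled and inside the
"stuff"-of-a-later-cancellation in an inconsistent way, and one must
confirm the net change of variables on $\F$ is tame (a composition of
elementary automorphisms). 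I would handle this by doing the off-diagonal
$e,f$ cancellations of Step 1 and the diagonal $e,f$ cancellations of
Step 2 first, so that by the time one cancels $\C$, the only degree-$2$
generators left are the off-diagonal $e_{ij},f_{ij}$, whose differentials
are then adjusted by substitution in a single pass. The verification that
the resulting differential squares to zero is automatic
(Proposition~\ref{prop:d2} plus invariance of $\dminus\circ\dminus = 0$
under tame isomorphism and cancellation), so no separate check is needed
there. Throughout, the identities of Lemma~\ref{lem:philphir} —
particularly $\phi_B(\Ahat) = \Phil_B\Ahat\Phir_B$ and
$\phi_B(\Acheck) = \Phil_B\Acheck\Phir_B$ — are what make the substituted
expressions collapse into the compact matrix formulas stated in
(\ref{DGA2}) and (\ref{DGA3}).
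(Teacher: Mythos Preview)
Your Step~1 is correct and essentially matches the paper: one cancels $b_{ij}$ for $i>j$ against $e_{ij}$ (unit coefficient) and $b_{ij}$ for $i<j$ against $f_{ij}$ (coefficient $\mu$, a unit in $R$), and the substitution you describe produces exactly the differentials listed in (\ref{DGA3}).

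Step~2, however, has a genuine gap. You propose to pass from (\ref{DGA3}) to (\ref{DGA2}), but (\ref{DGA3}) contains no $b_{ij}$ generators at all---they were eliminated in Step~1---while (\ref{DGA2}) has the full family $\{b_{ij}\}_{i\neq j}$. Your remark that ``the reintroduction of the $b_{ij}$ generators\ldots is bookkeeping'' is not right: reintroducing generators requires stabilization (adding cancelling pairs) followed by nontrivial tame isomorphisms, and you have not indicated what the cancelling partners of the new $b_{ij}$ would be or how the differential $\d^-(\B)=\Lam_B^{-1}\A\Lam_B-\phi_B(\A)$ would emerge. A generator count makes the difficulty concrete: after your diagonal cancellations $e_{ii}\leftrightarrow c_{ii}$ and $f_{ii}\leftrightarrow d_{ii}$, the DGA has $4n^2-4n$ generators, whereas (\ref{DGA2}) has $4n^2-2n$; the missing $2n$ generators cannot be recovered by further cancellation. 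Your ``Main obstacle'' paragraph compounds the confusion by speaking of cancelling the off-diagonal $c_{ij}$ against degree-$2$ generators, but there are $n(n-1)$ off-diagonal $c_{ij}$ and only $n(n-1)/2$ surviving $e_{ij}$ with $i<j$.

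The paper avoids this entirely by proving $(\ref{DGA1})\sim(\ref{DGA2})$ directly, not via (\ref{DGA3}). Starting from the original transverse DGA, one applies the tame automorphisms
\[
\C \mapsto \C + \hat{\B} - \Lam_B\Phil_B\D,\qquad
\D \mapsto \D\Lam_B^{-1},\qquad
\B \mapsto \Lam_B\B\Lam_B^{-1},\qquad
\F \mapsto (\Lam_B\F+\E)\Phir_B\Lam_B^{-1},
\]
after which $\d^-(\E)=-\C$. One then destabilizes by removing \emph{all} of the $c$ and $e$ generators simultaneously (as an $n^2$-fold cancellation), leaving precisely the generators $a,b,d,f$ of (\ref{DGA2}) with the stated differentials. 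The $b_{ij}$ never disappear in this route, so no reintroduction is needed.
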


\begin{proof}
To obtain DGA (\ref{DGA3}) from the transverse DGA (\ref{DGA1}), apply the tame automorphisms
\[
b_{ij} \mapsto b_{ij} + (\C+\Lam_B\cdot\Phil_B\cdot\D)_{ij}
\]
for $i>j$,
\[
b_{ij} \mapsto b_{ij} + \mu^{-1} (\D+\C\cdot\Phir_B\cdot\Lam_B^{-1})_{ij}
\]
for $i<j$, to the transverse DGA. The result has $\d^-(e_{ij}) = b_{ij}$ for $i>j$, $\d^-(f_{ij}) = b_{ij}$ for $i<j$, and $\d^-(\B) = 0$ (since $(\d^-)^2=0$). We can then destabilize by removing $b_{ij},e_{ij}$ for $i>j$ and $b_{ij},f_{ij}$ for $i<j$, and the result is DGA (\ref{DGA3}).

To obtain DGA (\ref{DGA2}) from the transverse DGA (\ref{DGA1}), successively apply the tame automorphisms
\begin{align*}
\C &\mapsto \C + \hat{\B} - \Lam_B \cdot \Phil_B \cdot \D \\
\D & \mapsto \D \cdot \Lam_B^{-1} \\
\B & \mapsto \Lam_B \cdot \B \cdot \Lam_B^{-1} \\
\F &\mapsto (\Lam_B\cdot\F+\E)\cdot\Phir_B\cdot\Lam_B^{-1}
\end{align*}
to the transverse DGA. The result has
$\d^-(\E) = -\C$, $\d^-(\C) = 0$ (since $(\d^-)^2=0$), and $\d^-(\B),
\d^-(\D), \d^-(\F)$ as given in the statement of the proposition.
We can then destabilize by removing the $c$ and $e$ generators, and
the result is DGA (\ref{DGA2}).
\end{proof}

DGA (\ref{DGA3}) from Proposition~\ref{prop:modifiedDGA} is the
analogue of the framed DGA from
\cite{Ngframed} and is convenient to use in the invariance proofs
under braid stabilization presented in Section~\ref{ssec:invproof2}
below. DGA (\ref{DGA2}) is, up to signs and powers of $\lambda$ and
$\mu$, the version of the transverse DGA considered in
\cite{EENStransverse}; see Section~\ref{ssec:eens} below for the exact
relation.

\subsection{Invariance proofs I: degree-$0$ transverse homology}
\label{ssec:invproof1}

Before embarking on the full invariance proofs for the transverse DGA,
we first prove the invariance of degree-$0$ transverse homology
$\mathit{HT}_0$. This is technically superfluous, since it follows
from the invariance of the DGA, but we have included it because it is
a simplified (and more readable) version of the full invariance
proof. In addition, as mentioned before, the applications of
transverse homology that we present in Section~\ref{sec:applications}
rely only on the invariance of $\mathit{HT}_0$, and the reader
interested in applications rather than the theory behind transverse
homology can skip the full invariance proofs in favor of the proofs in
this section.

\begin{proposition}
The degree-$0$ transverse homology $\HTminus_0(B)$ is an invariant of
transverse knots. More precisely, if braids $B,\tilde{B}$ are related by
some sequence of conjugation and positive de/stabilization, then there
is an isomorphism of $R[U,V]$-algebras
\label{prop:HT0inv}
\[
\HTminus_0(B) \cong \HTminus_0(\tilde{B}).
\]
\end{proposition}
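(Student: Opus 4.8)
The plan is to follow the strategy used for the framed knot DGA in \cite{Ngframed}: since transverse Markov equivalence is generated by braid conjugation and positive de/stabilization, it suffices to exhibit, for each of those two moves, an explicit isomorphism of $R[U,V]$-algebras between the two quotient rings. The tools are Lemma~\ref{lem:philphir}, which gives $\phi_B(\Ahat) = \Phil_B\cdot\Ahat\cdot\Phir_B$ and $\phi_B(\Acheck) = \Phil_B\cdot\Acheck\cdot\Phir_B$; the chain rules $\Phil_{B_1B_2} = \phi_{B_1}(\Phil_{B_2})\cdot\Phil_{B_1}$ and $\Phir_{B_1B_2} = \Phir_{B_1}\cdot\phi_{B_1}(\Phir_{B_2})$, proved by the same induction as the corresponding identities in \cite{NgKCH1,Ngframed}; and Corollary~\ref{cor:lam}, which lets us replace $\Lam_B$ by any diagonal matrix of determinant $\lambda\mu^{-w(B)}$. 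The only genuinely new bookkeeping relative to the $\mathit{HC}_0$ case is that $\A$ is replaced by the pair $\Ahat,\Acheck$ and the single cord-algebra relation by the pair of $\Phil$- and $\Phir$-relations, each carrying its own $U$'s and $V$'s.

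For conjugation, write $\tilde B = \alpha B\alpha^{-1}$ with $\alpha,B\in B_n$, so $w(\tilde B)=w(B)$. The claim is that the braid automorphism $\phi_\alpha\in\Aut\alg_n$ carries the defining ideal $I_B = (\Ahat-\Lam_B\Phil_B\Acheck,\ \Acheck-\Ahat\Phir_B\Lam_B^{-1})$ onto $I_{\tilde B}$, hence descends to the desired isomorphism. One applies $\phi_\alpha$ to the two generating matrices, rewrites $\phi_\alpha(\Ahat)$ and $\phi_\alpha(\Acheck)$ via Lemma~\ref{lem:philphir}, and uses the chain rule — together with $\Phil_{\alpha^{-1}}=\phi_{\alpha^{-1}}(\Phil_\alpha)^{-1}$ and its $\Phir$ analogue — to collapse the resulting products of $\Phil$'s and $\Phir$'s; modulo $I_{\tilde B}$, where by Lemma~\ref{lem:philphir} the relations force $\phi_{\tilde B}(\Ahat)=\Lam_B^{-1}\Ahat\Lam_B$ and $\phi_{\tilde B}(\Acheck)=\Lam_B^{-1}\Acheck\Lam_B$, the image reduces to zero, exactly as in the conjugation step of \cite{Ngframed}. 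Running the same computation with $\alpha^{-1}$ in place of $\alpha$ gives the reverse inclusion; any reindexing of the distinguished strand produced by $s(\alpha)$ is absorbed by passing to a different diagonal matrix on one side, as permitted by Corollary~\ref{cor:lam}.

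For positive stabilization, let $B\in B_n$, regard $B\in B_{n+1}$, and set $\tilde B = B\sigma_n\in B_{n+1}$, so $w(\tilde B)=w(B)+1$; we must identify $\HTminus_0(\tilde B)$, a quotient of $\alg_{n+1}$, with $\HTminus_0(B)$, a quotient of $\alg_n$. Using the chain rule to express $\Phil_{\tilde B}$ and $\Phir_{\tilde B}$ as $(n+1)\times(n+1)$ matrices in terms of the $(n+1)$-strand extensions of $\Phil_B,\Phir_B$ and the explicit matrices $\Phil_{\sigma_n},\Phir_{\sigma_n}$, one examines the $2(n+1)^2$ defining relations of $\HTminus_0(\tilde B)$ and finds that those involving the new generators $a_{i,n+1}$ and $a_{n+1,i}$ (for $1\le i\le n$) can be solved for these generators as explicit words in $\{a_{ij}\}_{1\le i,j\le n}$, up to the units $\mu,U,V$. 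Substituting these expressions into the surviving relations should leave exactly the ideal cutting out $\HTminus_0(B)$, where Corollary~\ref{cor:lam} is used on the $n$-strand side to choose a diagonal matrix of determinant $\lambda\mu^{-w(B)}$ absorbing the extra power of $\mu$ coming from the writhe change. This mirrors the cord-algebra stabilization computation of \cite[\S3]{Ngframed}.

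I expect the stabilization step to be the main obstacle. It requires carrying out the elimination of the $2n$ new generators and then verifying, by an explicit matrix manipulation, that the remaining relations coincide on the nose with those of $\HTminus_0(B)$ — a check made delicate precisely by the $U$- and $V$-weights on the off-diagonal entries of $\Ahat$, $\Acheck$ and of $\Phil_{\tilde B},\Phir_{\tilde B}$, which in the $U=V=1$ specialization collapse to the uniform pattern handled in \cite{Ngframed}. The conjugation step, by contrast, is essentially formal once Lemma~\ref{lem:philphir} and the chain rules are available.
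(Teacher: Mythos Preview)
Your outline follows the paper's proof closely: handle conjugation by showing that $\phi_\alpha$ carries one defining ideal to the other (the paper does this one generator $\sigma_k$ at a time, and uses Corollary~\ref{cor:lam} exactly as you suggest to absorb the reindexing of the distinguished strand), and handle positive stabilization by eliminating the $2n$ new generators and checking that the remaining relations are those of $\HTminus_0(B)$. The paper stabilizes by adjoining a strand labeled $0$ rather than $n+1$, but this is cosmetic.

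There is, however, one genuine red flag. You write that the new generators ``can be solved for \ldots\ up to the units $\mu,U,V$''. But $U$ and $V$ are \emph{not} units in $R[U,V]$; only $\lambda$ and $\mu$ are. This is not a minor slip: it is precisely the reason $\HTminus_0$ is a transverse invariant rather than a topological one. In the paper's computation the two relations used for the elimination are $\tilde\partial^-(c_{1i}) = \hat a_{1i} - \mu a_{0i}$ and $\tilde\partial^-(d_{i1}) = \check a_{i1} - a_{i0}$, so that $a_{0i} = \mu^{-1}\hat a_{1i}$ and $a_{i0} = \check a_{i1}$; only $\mu^{-1}$ is required, never $U^{-1}$ or $V^{-1}$. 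By contrast, for \emph{negative} stabilization (Proposition~\ref{prop:HTinftyinv}) the analogous elimination genuinely needs $U^{-1}$ and $V^{-1}$, which is why one must pass to the infinity theory there. So your approach is correct, but the stabilization step is not a formality: you must actually write out the relations and verify that among them there are $2n$ whose coefficients on the new generators are units in $R[U,V]$. That this happens for positive but not negative stabilization is the entire point, and your phrasing suggests you have not yet isolated it.

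A smaller point on conjugation: the step you call ``essentially formal'' hides one nontrivial check. Because $\Lam_B$ does not commute with $\Phil_{\sigma_k}$, an error term appears whose nonzero entry is (a scalar multiple of) $a_{k+1,k} - \phi_B(a_{k+1,k})$; one must observe, via the argument of Proposition~\ref{prop:deg0}, that this already lies in the ideal generated by the $\C$- and $\D$-relations. This is easy but should be said.
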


\begin{proof}
Let $B\in B_n$. It suffices to consider $B,\tilde{B}$ related by one of the following:
$\tilde{B} = \sigma_k^{-1} B \sigma_k$ for some $k$; $\tilde{B} = B
\sigma_0$, where we view $B\subset B_{n+1}$ with the strands in
$B_{n+1}$ numbered $0,\ldots,n$ instead of $1,\ldots,n+1$.

\vspace{11pt}

\noindent \textsc{Case 1: $\tilde{B} = \sigma_k^{-1} B \sigma_k$.}

We follow the proofs of Theorem~4.10 from \cite{NgKCH1} and
Theorem~2.7 from \cite{Ngframed}. Define $\Lam$ to be $\Lam_B = \diag(\lambda\mu^{-w(B)},1,\ldots,1)$ if $k\neq 1$, and
$\diag(1,\lambda\mu^{-w(B)},1,\ldots,1)$ if $k=1$. Write $\I,\tilde{\I}$ for the ideals
\begin{align*}
\I &= (\Ahat -
\Lam_B\cdot\Phil_B\cdot\Acheck,
\Acheck - \Ahat \cdot \Phir_B \cdot \Lam_B^{-1})
\\
\tilde{\I} &= (\Ahat - \Lam\cdot\Phil_{\tilde{B}}\cdot\Acheck,
\Acheck-\Ahat\cdot\Phir_{\tilde{B}}\cdot\Lam^{-1})
\end{align*}
in $\alg_n$.
By Corollary~\ref{cor:lam}, we have $\HTminus_0(B) \cong \alg_n/\I$ and $\HTminus_0(\tilde{B}) \cong \alg_n/\tilde{\I}$.

We claim that $\phi_{\sigma_k} :\thinspace \alg_n \to \alg_n$ maps
$\tilde{\I}$ into $\I$; a similar argument shows that $\phi_{\sigma_k}^{-1}$ maps
$\I$ into $\tilde{\I}$, which proves that $\phi_{\sigma_k}$ induces an
isomorphism $\HTminus_0(B) \to \HTminus_0(\tilde{B})$.
To prove the claim, we will show that the entries of
\[
\mathbf{M} = \phi_{\sigma_k}(\Ahat) - \Lam \cdot
\Phil_{\tilde{B}}(\phi_{\sigma_k}(\A)) \cdot \phi_{\sigma_k}(\Acheck)
\]
are in $\I$. Similarly, the entries of $\phi_{\sigma_k}(\Acheck) -
\phi_{\sigma_k}(\Ahat) \cdot \Phir_{\tilde{B}}(\phi_{\sigma_k}(\A))
\cdot \Lam^{-1}$ are in $\I$, and the claim follows.

We have
\begin{align*}
\mathbf{M} &= \Phil_{\sigma_k}(\A) \Ahat \Phir_{\sigma_k}(\A) -
\Lam \left( \Phil_{\sigma_k}(\phi_B(\A)) \Phil_B(\A)
  (\Phil_{\sigma_k}(\A))^{-1} \right) (\Phil_{\sigma_k}(\A) \Acheck
\Phir_{\sigma_k}(\A)) \\
&= \Phil_{\sigma_k}(\A)
\left\{\Ahat-\Lam_B\Phil_B(\A)\Acheck\right\}\Phir_{\sigma_k}(\A) \\
& \qquad - \left\{\Lam
  \Phil_{\sigma_k}(\phi_B(\A))-\Phil_{\sigma_k}(\A) \Lam_B \right\}
\Phil_B(\A) \Acheck \Phir_{\sigma_k}(\A).
\end{align*}
But both terms in braces in this last expression have entries in
$\I$. This is clear for the first term in braces;
by the expression for $\Phil_{\sigma_k}$ from \cite[Lemma~4.6]{NgKCH1},
the second term in braces is the $0$ matrix except in the $kk$ entry, where it is
$a_{k+1,k}-\phi_B(a_{k+1,k})$ (or $\lambda\mu^{-w(B)}a_{21}-\phi_B(a_{21})$ if $k=1$),
which is a scalar multiple of the $k+1,k$ entry of the matrix
$\A-\Lam_B\cdot\phi_B(\A)\cdot\Lam_B^{-1}$ and hence
in $\I$ by the proof of Proposition~\ref{prop:deg0}.

\vspace{11pt}

\noindent \textsc{Case 2: $\tilde{B} = B \sigma_0$.}
Write $\tilde{\alg}$ for the algebra over $R[U,V]$ obtained from
$\alg_n$ by adding generators $a_{i0},a_{0i}$ for $1\leq i\leq
n$. Also write $\dminus$, $\tilde{\partial}^-$ for the differentials
on the transverse DGAs of $B$ and $\tilde{B}$, respectively. Then we
can write $\HTminus_0(B) = \alg_n/\I$ and
$\HTminus_0(\tilde{B}) = \tilde{\alg}/\tilde{\I}$, where $\I$ is
generated by $\dminus(c_{ij}),\dminus(d_{ij})$ for $1\leq i\neq j\leq
n$, and $\tilde{\I}$ is generated by
$\tilde{\partial}^-(c_{ij}),\tilde{\partial}^-(d_{ij})$ for $0\leq
i\neq j\leq n$. Using the expressions for
$\Phil_{B\sigma_0},\Phir_{B\sigma_0}$ from the proof of Theorem~4.10
in \cite{NgKCH1}, we calculate, for $1\leq i\leq n$ and $2\leq j\leq n$:
\begin{align*}
\tilde{\partial}^-(c_{00}) &= -1-\mu U-\lambda \mu^{-w(B)-1}
(\phi(a_{10})(V+\mu)-V\Phil_{1\ell}a_{\ell 0}) \\
& \quad = -1-\mu U-\lambda \mu^{-w(B)}
\Phil_{1\ell}a_{\ell 0}  \\
\tilde{\partial}^-(c_{0i}) &=
\mu U a_{0i} - \lambda\mu^{-w(B)-1}
(-\mu\phi(a_{10})a_{0i}-\Phil_{1\ell} \check{a}_{\ell i})
 \\
\tilde{\partial}^-(c_{10}) &= a_{10}+V+\mu \\
\tilde{\partial}^-(c_{1i}) &= \hat{a}_{1i}-\mu a_{0i} \\
\tilde{\partial}^-(c_{j0}) &= a_{j0}-V\Phil_{j\ell}a_{\ell 0} \\
\tilde{\partial}^-(c_{ji}) &= \hat{a}_{ji}-\Phil_{j\ell}\check{a}_{\ell i} \\
\tilde{\partial}^-(d_{00}) &=-V-\mu-\lambda^{-1}\mu^{w(B)+1}
((1+\mu U)\phi(a_{01})-\mu U a_{0\ell} \Phir_{\ell 1}) \\
& \quad = -V-\mu-\lambda^{-1}\mu^{w(B)+1}
a_{0\ell} \Phir_{\ell 1} \\
\tilde{\partial}^-(d_{i0}) &=
Va_{i0}-\lambda^{-1}\mu^{w(B)+1}
(-a_{i0}\phi(a_{01})
-\hat{a}_{i\ell}\Phir_{\ell 1})\\
\tilde{\partial}^-(d_{01}) &= \mu a_{01}+1+\mu U \\
\tilde{\partial}^-(d_{i1}) &= \check{a}_{i1}-a_{i0} \\
\tilde{\partial}^-(d_{0j}) &= \mu a_{0j}-\mu U a_{0\ell} \Phir_{\ell j} \\
\tilde{\partial}^-(d_{ij}) &=
\check{a}_{ij}-\hat{a}_{i\ell}\Phir_{\ell j}.
\end{align*}
Here $\hat{a}_{ij},\check{a}_{ij}$ are the $ij$ entries of
$\Ahat,\Acheck$ respectively, $\phi,\Phil,\Phir$ are shorthand for
$\phi_B,\Phil_B,\Phir_B$, and all appearances of $\ell$ are
understood to represent sums over $\ell=1,\ldots,n$.

From $\tilde{\partial}^-(c_{1i}),\tilde{\partial}^-(d_{i1})$, we see
that in $\tilde{\alg}/\tilde{\I}$, we have
$a_{0i} = \hat{a}_{1i}/\mu$ and $a_{i0} = \check{a}_{i1}$ for $1\leq
i\leq n$. Using these relations to replace all appearances of
$a_{0i},a_{i0}$, we find that the remaining relations in
$\tilde{\alg}/\tilde{\I}$ become precisely the generators of $\I$.
For instance,
\begin{align*}
\tilde{\partial}^-(c_{00}) &= -1 - \mu U-\lambda\mu^{-w(B)}
\Phil_{1\ell}\check{a}_{\ell 1} = \partial^-(c_{11}) \\
\tilde{\partial}^-(c_{0i}) &=
U\hat{a}_{1i}+\lambda\mu^{-w(B)-1}\left(\Phil_{1\ell} \check{a}_{\ell 1}
\hat{a}_{1i} + \Phil_{1\ell}\check{a}_{\ell i}\right)
= -\mu^{-1}\left((\d^-(c_{11}))\hat{a}_{1i}+\d^-(c_{1i})\right)
\end{align*}
and so forth. It
follows that $\tilde{\alg}/\tilde{\I} \cong \alg_n/\I$, as desired.
\end{proof}

We next establish the invariance result for degree-$0$ infinity transverse homology.

\begin{proposition}
The degree-$0$ infinity transverse homology $\HTinfty_0(B)$ is an invariant of
topological knots. More precisely, if braids $B,\tilde{B}$ have the
same knot closure, then there
is an isomorphism of $R[U^{\pm 1},V^{\pm 1}]$-algebras
\label{prop:HTinftyinv}
\[
\HTinfty_0(B) \cong \HTinfty_0(\tilde{B}).
\]
\end{proposition}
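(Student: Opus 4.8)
The plan is to invoke the Markov theorem, which says that $B$ and $\tilde B$ have the same knot closure if and only if they are related by a sequence of conjugations and positive/negative de/stabilizations. Invariance under conjugation and positive de/stabilization is already essentially available: the self-linking number $\sl(B) = w(B)-n(B)$ is unchanged by a conjugation and by a positive (de)stabilization, so the substitution $\lambda \mapsto \lambda(U/V)^{-(\sl(B)+1)/2}$ defining the infinity theory is literally identical for $B$ and for the modified braid. The isomorphisms $\HTminus_0(B)\cong\HTminus_0(\tilde B)$ built in Proposition~\ref{prop:HT0inv} are $R$-linear (they are assembled from $\phi_{\sigma_k}$ and from substitutions such as $a_{0i}=\hat a_{1i}/\mu$, none of which involves $\lambda$), so they commute with that substitution; tensoring with $R[U^{\pm 1},V^{\pm 1}]$ and applying the substitution therefore yields $\HTinfty_0(B)\cong\HTinfty_0(\tilde B)$ in these cases. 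Hence it suffices to prove $\HTinfty_0(B)\cong\HTinfty_0(B\sigma_0^{-1})$, i.e.\ invariance under a single negative stabilization.

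Set $\tilde B = B\sigma_0^{-1}\in B_{n+1}$, viewing $B\subset B_{n+1}$ with strands numbered $0,\ldots,n$ exactly as in Case~2 of the proof of Proposition~\ref{prop:HT0inv}. Now $w(\tilde B) = w(B)-1$ and $n(\tilde B)=n(B)+1$, so $\sl(\tilde B)=\sl(B)-2$ and $(\sl(\tilde B)+1)/2 = (\sl(B)+1)/2 - 1$. Consequently the diagonal matrix $\Lam_{\tilde B}'$ has leading entry $\lambda\mu^{-w(B)+1}(U/V)^{-(\sl(B)+1)/2}(U/V)$; that is, it differs from the naive negative-stabilization analogue of $\Lam_B'$ by exactly one extra factor of $U/V$. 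This single extra power of $U/V$ is the whole point of the substitution in the infinity theory, and it is what will absorb the discrepancy between adding a positive crossing and adding a negative one. (If convenient one may first renormalize $\Lam_{\tilde B}'$ using Corollary~\ref{cor:lam}, which only depends on $\det$.)

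The computation then mirrors Case~2 of Proposition~\ref{prop:HT0inv}, with $\sigma_0^{-1}$ in place of $\sigma_0$ and $\Lam_{\tilde B}'$ in place of $\Lam$. Using the chain rule for $\Phil,\Phir$ together with the explicit formulas for $\Phil_{\sigma_0^{-1}},\Phir_{\sigma_0^{-1}}$ from \cite{NgKCH1}, one expresses $\Phil_{\tilde B},\Phir_{\tilde B}$ in terms of $\Phil_B,\Phir_B$ and writes out the generators $\td^-(c_{ij}),\td^-(d_{ij})$ ($0\le i\ne j\le n$) of the ideal $\tilde\I$ cutting out $\HTinfty_0(\tilde B)$. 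As in Case~2, two of these relations (coming from the interactions of strands $0$ and $1$) let one solve for the new generators $a_{0i},a_{i0}$ ($1\le i\le n$) as $R[U^{\pm 1},V^{\pm 1}]$-algebra expressions in the old generators; substituting these back in, one checks that the remaining relations become precisely the entries of $\Ahat-\Lam_B'\cdot\Phil_B\cdot\Acheck$ and $\Acheck-\Ahat\cdot\Phir_B\cdot(\Lam_B')^{-1}$, i.e.\ the generators of the ideal defining $\HTinfty_0(B)$. This gives the desired $R[U^{\pm 1},V^{\pm 1}]$-algebra isomorphism $\HTinfty_0(\tilde B)\cong\HTinfty_0(B)$, and combined with the first paragraph the proposition follows.

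The main obstacle is purely a matter of bookkeeping: one must track the powers of $U$ and $V$ carried by $\Ahat$, $\Acheck$, $\hat{\B}$, $\check{\B}$ and by $\Lam_{\tilde B}'$ through the back-substitution for $a_{0i},a_{i0}$, and verify that they all cancel so that the relations collapse exactly onto those of $\HTinfty_0(B)$, with the crucial cancellation supplied by the extra factor of $U/V$ in $\Lam_{\tilde B}'$ noted above. A useful consistency check is that setting $U=V=1$ reduces the argument to the negative-stabilization invariance of the degree-$0$ framed knot DGA $HC_0$ of \cite{Ngframed} (where a negative crossing contributes no net power of $U/V$); the present computation is that same one with the $U,V$ weights reinstated.
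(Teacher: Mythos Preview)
Your proposal is correct and follows essentially the same approach as the paper: reduce to negative stabilization via Proposition~\ref{prop:HT0inv}, compute the ideal for $\tilde B=B\sigma_0^{-1}$ using the explicit formulas for $\Phil_{B\sigma_0^{-1}},\Phir_{B\sigma_0^{-1}}$, solve two of the relations for the new generators $a_{0i},a_{i0}$ (here crucially using the invertibility of $U$ and $V$), and substitute back to see that the remaining relations reproduce the ideal for $B$. The paper carries this out explicitly, using $\tilde\partial^\infty(c_{0i})$ and $\tilde\partial^\infty(d_{i0})$ to eliminate $a_{0i},a_{i0}$ and noting in addition that $\tilde\partial^\infty(c_{00})=0$ and $\tilde\partial^\infty(d_{00})=0$ become tautological after substitution; your emphasis on the extra factor of $U/V$ in $\Lam_{\tilde B}'$ as the mechanism making this work is exactly the right heuristic.
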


\begin{proof}
Because of Proposition~\ref{prop:HT0inv}, it suffices to establish the
result when $\tilde{B}$ is a negative stabilization of $B$: $\tilde{B}
= B\sigma_0^{-1}$. We use the same notation as the proof of
Proposition~\ref{prop:HT0inv} in the case $\tilde{B} =
B\sigma_0$. Using the expressions for
$\Phil_{B\sigma_0^{-1}},\Phir_{B\sigma_0^{-1}}$
from the proof of Theorem~4.10
in \cite{NgKCH1}, we calculate, for $1\leq i\leq n$ and $2\leq j\leq n$:
\begin{align*}
\tilde{\partial}^{\infty}(c_{00}) &=
-1-\mu U-\lambda \mu^{-w(B)+1} V(U/V)^{-(\sl(B)-1)/2}\Phil_{1\ell} a_{\ell 0}  \\
\tilde{\partial}^{\infty}(c_{0i}) &= \mu U
a_{0i}-\lambda\mu^{-w(B)+1}(U/V)^{-(\sl(B)-1)/2}\Phil_{1\ell} \check{a}_{\ell i}  \\
\tilde{\partial}^{\infty}(c_{10}) &=
a_{10}-(V+\mu-V\phi(a_{01})\Phil_{1\ell}a_{\ell 0})  \\
\tilde{\partial}^{\infty}(c_{1i}) &=
\hat{a}_{1i}-(-\mu a_{0i}-\phi(a_{01})\Phil_{1\ell}\check{a}_{\ell i})  \\
\tilde{\partial}^{\infty}(c_{j0}) &= a_{j0}-V\Phil_{j\ell}a_{\ell 0}  \\
\tilde{\partial}^{\infty}(c_{ji}) &=
\hat{a}_{ji}-\Phil_{j\ell}\check{a}_{\ell i}  \\
\tilde{\partial}^{\infty}(d_{00}) &= -V-\mu-\lambda^{-1}\mu^{w(B)}U(U/V)^{(\sl(B)-1)/2}
a_{0\ell}\Phir_{\ell 1}  \\
\tilde{\partial}^{\infty}(d_{i0}) &= Va_{i0}-\lambda^{-1}\mu^{w(B)-1}
(U/V)^{(\sl(B)-1)/2}\hat{a}_{i\ell}\Phir_{\ell 1}  \\
\tilde{\partial}^{\infty}(d_{01}) &= \mu a_{01} - (1+\mu U-\mu U
a_{0\ell} \Phir_{\ell 1}\phi(a_{10}))  \\
\tilde{\partial}^{\infty}(d_{i1}) &=
\check{a}_{i1}-(-a_{i0}-\hat{a}_{i\ell}\Phir_{\ell 1}\phi(a_{10}))  \\
\tilde{\partial}^{\infty}(d_{0j}) &= \mu a_{0j}-\mu U
a_{0\ell}\Phir_{\ell j}  \\
\tilde{\partial}^{\infty}(d_{ij}) &= \check{a}_{ij}-\hat{a}_{i\ell}
\Phir_{\ell j}.
\end{align*}
From $\tilde{\partial}^{\infty}(c_{0i}),\tilde{\partial}^{\infty}(d_{i0})$, we see
that in $\tilde{\alg}/\tilde{\I}$, we have
\begin{align*}
a_{0i} &= \lambda\mu^{-w(B)}V^{-1}(U/V)^{-(\sl(B)+1)/2}
\Phil_{1\ell}\check{a}_{\ell i} \\
a_{i0} &= \lambda^{-1}\mu^{w(B)-1}U^{-1}(U/V)^{(\sl(B)+1)/2}
\hat{a}_{i\ell}\Phir_{\ell 1}
\end{align*} for $1\leq
i\leq n$. Using these relations to replace all appearances of
$a_{0i},a_{i0}$, we find that the relations
$\tilde{\partial}^\infty(c_{00}) = 0$ and
$\tilde{\partial}^\infty(d_{00}) = 0$ become $0=0$, and the
remaining relations in
$\tilde{\alg}/\tilde{\I}$ become precisely the generators of $\I$. It
follows that $\tilde{\alg}/\tilde{\I} \cong \alg_n/\I$, as desired.
\end{proof}

\subsection{Invariance proofs II: the full transverse DGA}
\label{ssec:invproof2}

Here we extend the arguments from Section~\ref{ssec:invproof1} to
prove invariance for the full transverse DGA, Theorems~\ref{thm:main}
and~\ref{thm:infty}. We provide detailed outlines for the invariance proofs, which follow the proof of Theorem 2.7 in \cite{Ngframed}, and leave some amount of easy but tedious checking to the reader. We remark that the proofs given here reduce, upon setting $U=V=1$, to a proof of invariance of the knot DGA that is very similar to, but slightly different from, the original proof from \cite[Thm.~2.7]{Ngframed}. The few changes are changes of convenience for our current setup.

\begin{proof}[Outline of proof of Theorem~\ref{thm:main}]

As in the proof of Proposition~\ref{prop:HT0inv}, it suffices to prove
invariance for the transverse DGA under stable tame isomorphism for
braids $B,\tilde{B}$ related by either conjugation or positive stabilization.

\vspace{11pt}

\noindent \textsc{Case 1: $\tilde{B} = \sigma_k^{-1} B \sigma_k$.}

Let $(\alg=\CTminus,\d^-)$ denote the transverse DGA of $B$, and let $(\tilde{\alg}=\CTtildeminus,\tilde{\d}^-)$ denote the transverse DGA of $\tilde{B}$ but with $\Lam_{\tilde{B}}$ replaced by $\tilde{\Lam}$, defined to be $\diag(\lambda\mu^{-w(B)},1,\ldots,1)$ if $k\neq 1$ and $\diag(1,\lambda\mu^{-w(B)},1,\ldots,1)$ if $k=1$; this replacement is allowed by Proposition~\ref{prop:lam}. We claim that $(\alg,\d^-)$ and $(\tilde{\alg},\tilde{\d}^-)$ are tamely isomorphic.

The tame isomorphism, which is nearly identical to the corresponding map considered in the proof of \cite[Thm.~2.7]{Ngframed}\footnote{Note that our $e$ and $f$ generators are respectively the $f$ and $e$ generators in \cite{Ngframed}.}, is given by the identification:
\begin{align*}
\tilde{\A} &= \phi_{\sigma_k}(\A) \\
\tilde{\B} &= \Phil_{\sigma_k}(\epsilon_k^{-1}\phi_B(\A)) \cdot \B \cdot \Phir_{\sigma_k}(\epsilon_k\phi_B(\A)) + \Phil_{\sigma_k}(\A)\cdot\A\cdot\Th^R_k \\
& \qquad + \Th^L_k\cdot \A\cdot\Phir_{\sigma_k}(\epsilon_k\phi_B(\A))
+ \d^-(\Th^L_k \cdot \Th^R_k) \\
\tilde{\C} &= \Phil_{\sigma_k}(\epsilon_k^{-1}\phi_B(\A)) \cdot \C \cdot \Phir_{\sigma_k}(\A) + \Th^L_k\cdot\Ahat\cdot\Phir_{\sigma_k}(\A) \\
\tilde{\D} &= \Phil_{\sigma_k}(\A) \cdot \D \cdot \Phir_{\sigma_k}(\epsilon_k\phi_B(\A)) + \Phil_{\sigma_k}(\A) \cdot \Acheck \cdot \Th^R_k \\
\tilde{\E} &= \Phil_{\sigma_k}(\epsilon_k^{-1}\phi_B(\A)) \cdot \E \cdot \Phir_{\sigma_k}(\epsilon_k\phi_B(\A)) + \Phil_{\sigma_k}(\epsilon_k^{-1}\phi_B(\A)) \cdot C \cdot \Th^R_k + \Th^L_k\cdot(\Ahat+\mu U)\cdot\Th^R_k \\
\tilde{\F} &= \Phil_{\sigma_k}(\epsilon_k^{-1}\phi_B(\A)) \cdot \F \cdot \Phir_{\sigma_k}(\epsilon_k\phi_B(\A)) - \Th^L_k \cdot \D \cdot \Phir_{\sigma_k}(\epsilon_k\phi_B(\A)) + \mu \Th^L_k\cdot\Th^R_k.
\end{align*}
Here, as in \cite{Ngframed}, $\epsilon_k$ is $\lambda \mu^{-w(B)}$ if $k=1$ and $1$ otherwise;
$\Phil_{\sigma_k}(\epsilon_k^{-1}\phi_B(\A)),\Phir_{\sigma_k}(\epsilon_k\phi_B(\A))$ are the matrices $\Phil_{\sigma_k}(\A),\Phir_{\sigma_k}(\A)$ with $a_{k+1,k},a_{k,k+1}$ replaced by $\epsilon_k^{-1}\phi_B(a_{k+1,k}),\epsilon_k\phi_B(a_{k,k+1})$; and $\Th^L_k,\Th^R_k$ are the matrices that are identically zero except in the $(k,k)$ entry, where they are $-b_{k+1,k},-b_{k,k+1}$. (Note however that $\A$ here, which has $-2$ entries along the diagonal, is the result of setting $\mu=-1$ in the matrix $A$ from \cite{Ngframed}.)

We leave the verification that this identification intertwines $\d^-$ and $\tilde{\d}^-$ to the reader. In addition to the identities provided in the proof of \cite[Thm.~2.7]{Ngframed}, the following two identities are useful in this regard:
\begin{align*}
\hat{\tilde{\B}} &= \Phil_{\sigma_k}(\epsilon_k^{-1}\phi_B(\A)) \cdot \hat{\B} \cdot \Phir_{\sigma_k}(\epsilon_k\phi_B(\A)) + \Phil_{\sigma_k}(\A)\cdot\Ahat\cdot\Th^R_k \\
& \qquad + \Th^L_k\cdot \Ahat\cdot\Phir_{\sigma_k}(\epsilon_k\phi_B(\A))
+ \mu U \,\d^-(\Th^L_k \cdot \Th^R_k) \\
\check{\tilde{\B}} &= \Phil_{\sigma_k}(\epsilon_k^{-1}\phi_B(\A)) \cdot \check{\B} \cdot \Phir_{\sigma_k}(\epsilon_k\phi_B(\A)) + \Phil_{\sigma_k}(\A)\cdot\Acheck\cdot\Th^R_k \\
& \qquad + \Th^L_k\cdot \Acheck\cdot\Phir_{\sigma_k}(\epsilon_k\phi_B(\A))
+ \mu \,\d^-(\Th^L_k \cdot \Th^R_k).
\end{align*}

\vspace{11pt}

\noindent \textsc{Case 2: $\tilde{B} = B \sigma_0$.}

Let $(\alg=\CTminus,\d^-)$, $(\tilde{\alg}=\CTtildeminus,\tilde{\d}^-)$ denote the modified versions of
the transverse
DGAs of $B$, $\tilde{B}$, respectively, given by (\ref{DGA3}) in
Proposition~\ref{prop:modifiedDGA}. We wish to show that
$(\alg,\d^-)$ and $(\tilde{\alg},\tilde{\d}^-)$ are stable tame
isomorphic.

The generators of $\alg$ are: $a_{ij}$ for $1\leq i\neq j\leq n$; $c_{ij},d_{ij}$ for $1\leq i,j\leq n$; and $e_{ij},f_{ji}$ for $1\leq i\leq j\leq n$. In order to give $\alg$ as many generators as $\tilde{\alg}$ has, add to $\alg$ the generators
\[
a_{0i},a_{i0},c_{00},c_{0i},c_{i0},d_{00},d_{0i},d_{i0},e_{00},e_{0i},f_{00},f_{i0}
\]
for $1\leq i\leq n$, and extend the differential on $\alg$ to these generators by: $\d^- c_{0i} = -\mu a_{0i}+\hat{a}_{1i}$, $\d^- d_{i0} = -a_{i0}+\check{a}_{i1}$ for $1\leq i\leq n$; $\d^- e_{00} = c_{00}$, $\d^- f_{00} = d_{00}$, $\d^- e_{01} = c_{10}$, $\d^- f_{10} = d_{01}$; $\d^- e_{0j} = d_{0j}$, $\d^- f_{j0} = c_{j0}$ for $2\leq j\leq n$; and the differential on the other new generators is $0$. The resulting DGA, which we also write as $(\alg,\d^-)$, is stable tame isomorphic to the original DGA for $B$.

We will present a tame isomorphism between the new $(\alg,\d^-)$ and $(\tilde{\alg},\tilde{\d}^-)$.
For clarity, add tildes to the generators of $\tilde{\alg}$: $\tilde{a}_{ij}$ for $0\leq i\neq j\leq n$; $\tilde{c}_{ij},\tilde{d}_{ij}$ for $0\leq i,j\leq n$; $\tilde{e}_{ij},\tilde{f}_{ji}$ for $0\leq i\leq j\leq n$. The tame isomorphism is the identification between $\alg$ and $\tilde{\alg}$ given as follows: $a_{i_1i_2} = \tilde{a}_{i_1i_2}$ for $0\leq i_1\neq i_2\leq n$ and
\begin{alignat*}{2}
c_{00} &= \tilde{c}_{00} - \lambda\mu^{-w(B)}(\Phil_B)_{1\ell} \tilde{d}_{\ell 1}-c_{11} \qquad &
d_{00} &= \tilde{d}_{00} - \lambda^{-1}\mu^{w(B)}\tilde{c}_{1\ell}(\Phir_B)_{\ell 1}-d_{11} \\
c_{0i} &= \tilde{c}_{1i} &
d_{i0} &= \tilde{d}_{i1} \\
c_{10} &= \tilde{d}_{01} + \tilde{c}_{11} &
d_{01} &= \tilde{c}_{10} + \tilde{d}_{11} \\
c_{1i} &= -\mu \tilde{c}_{0i} + \tilde{c}_{1i} - \mu \tilde{c}_{00} a_{0i} &
d_{i1} &= -\mu^{-1} \tilde{d}_{i0}+\tilde{d}_{i1}-\mu^{-1}a_{i0}\tilde{d}_{00} \\
c_{j0} &=
\tilde{c}_{j0}+\tilde{d}_{j1}-V\tilde{c}_{j1}-V(\Phil_B)_{j\ell}\tilde{d}_{\ell
  1} &
d_{0j} &=
\tilde{d}_{0j}+\tilde{c}_{1j}-U\tilde{d}_{1j}-U\tilde{c}_{1\ell}(\Phir_B)_{\ell
  j}\\
c_{ji} &= \tilde{c}_{ji} &
d_{ij} &= \tilde{d}_{ij} \\
e_{00} &= \mu\tilde{e}_{01}-\tilde{e}_{11}-\tilde{c}_{00}\tilde{d}_{01} &
f_{00} &= \mu^{-1}\tilde{f}_{10}-\tilde{f}_{11}+\mu^{-1}\tilde{c}_{10}\tilde{d}_{00} \\
e_{0i} &= \tilde{e}_{1i} & f_{i0} &= \tilde{f}_{i1} \\
e_{11} &= \tilde{e}_{00}+\tilde{e}_{11}-\mu \tilde{e}_{01}+\tilde{c}_{00}\tilde{d}_{01} &
f_{11} &= \tilde{f}_{00} + \tilde{f}_{11} - \mu^{-1} \tilde{f}_{10} - \mu^{-1} \tilde{c}_{10}\tilde{d}_{00} \\
e_{1j} &= -\mu \tilde{e}_{0j} - \tilde{e}_{1j} + \tilde{c}_{00}\tilde{d}_{0j} &
f_{j1} &= -\mu^{-1}\tilde{f}_{j0} + \tilde{f}_{j1} - \mu^{-1} \tilde{c}_{j0} \tilde{d}_{00} \\
e_{j_1j_2} &= \tilde{e}_{j_1j_2} &
f_{j_2j_1} &= \tilde{f}_{j_2j_1}
\end{alignat*}
for $1\leq i\leq n$, $2\leq j\leq n$, and $2\leq j_1\leq j_2\leq
n$. Here expressions involving $\ell$ are summations over
$1\leq\ell\leq n$. 

The above identification intertwines the differentials $\d^-$ and
$\tilde{\d}^-$, a fact that we leave to the reader to verify. To complete the proof of the
theorem, we need to check that the identification represents a tame
isomorphism between $\alg$ and $\tilde{\alg}$. To this end, we can
express the identification as a composition of maps beginning at
$\tilde{\alg}$ and ending at $\alg$, where each map replaces some of the
generators of $\tilde{\alg}$ by generators of $\alg$. Beginning with
$\tilde{\alg}$, we successively replace generators as follows:
\begin{itemize}
\item
the $\tilde{a}$'s by the $a$'s;
\item
then $\tilde{e}_{00}$, $\tilde{f}_{00}$, $\tilde{e}_{0j}$, $\tilde{f}_{j0}$ by $e_{11}$, $f_{11}$, $e_{1j}$, $f_{j1}$ for $2\leq j\leq n$;
\item
then $\tilde{e}_{01}$, $\tilde{f}_{10}$ by $e_{00}$, $f_{00}$;
\item
then the remaining $\tilde{e}$'s, $\tilde{f}$'s by the remaining $e$'s, $f$'s;
\item
then  $\tilde{c}_{10}$, $\tilde{d}_{01}$,
$\tilde{c}_{0i}$, $\tilde{d}_{i0}$, $\tilde{c}_{j0}$, $\tilde{d}_{0j}$
by $d_{01}$, $c_{10}$, $c_{1i}$, $d_{i1}$, $c_{j0}$, $d_{0j}$ for
$1\leq i\leq n$ and $2\leq j\leq n$; 
\item
then $\tilde{c}_{00}$, $\tilde{d}_{00}$ by $c_{00}$, $d_{00}$;
\item
then the remaining $\tilde{c}$'s, $\tilde{d}$'s by the remaining $c$'s, $d$'s.
\end{itemize}
An inspection of the above identification shows that each of these maps is a tame isomorphism (in particular, each map involves only generators present at that moment), and so their composition is as well.
\end{proof}

\begin{proof}[Outline of proof of Theorem~\ref{thm:infty}]
Because of Theorem~\ref{thm:main}, it
suffices to show that the infinity transverse DGA over $R[U^{\pm
  1},V^{\pm 1}]$ is invariant under negative braid stabilization.

Let $B$ be an $n$-strand braid and $\tilde{B} = B\sigma_0^{-1}$ be its
negative stabilization, and let $(\alg=\CTinfty,\d^\infty)$ and
$(\tilde{\alg}=\widetilde{\mathit{CT}}^{\infty},\d^\infty)$ denote the infinity flavor of the version
of the transverse DGAs for $B$
and $\tilde{B}$, respectively, given as (\ref{DGA3}) in
Proposition~\ref{prop:modifiedDGA}. Write $\lambda' = \lambda
\mu^{-w(B)} (U/V)^{-(\sl(B)+1)/2}$.

Add to the DGA $(\alg,\d^\infty)$ the generators
\[
a_{0i},a_{i0},c_{00},c_{0i},c_{i0},d_{00},d_{0i},d_{i0},e_{00},e_{0i},f_{00},f_{i0},
\]
and extend the differential on $\alg$ to these generators by setting
$\d^\infty e_{00} = c_{00}$, $\d^\infty f_{00} = d_{00}$, $\d^\infty e_{0i} = d_{0i}$, $\d^\infty f_{i0} = c_{i0}$,
\[
\d^\infty c_{0i} = \mu U a_{0i}-\textstyle{\frac{\lambda'\mu U}{V}} \Phil_{1\ell} \ahat_{li},
\qquad
\d^\infty d_{i0} = V a_{i0} - \textstyle{\frac{V}{\lambda'\mu U}} \ahat_{i\ell} \Phir_{\ell 1}
\]
for $1\leq i\leq n$, and $\d^\infty=0$ for the other new generators. The result is a DGA that we now call $(\alg,\d^\infty)$ and is stable tame isomorphic to the original infinity DGA for $B$.

Now the following identification between $(\alg,\d^\infty)$ and $(\tilde{\alg},\tilde{\d}^\infty)$ is an isomorphism of DGAs (as usual, we leave this as an exercise):
\begin{alignat*}{2}
c_{00} &= \tilde{c}_{00} + \textstyle{\frac{\lambda'\mu U}{V}} \Phil_{1\ell} \tilde{d}_{\ell 0} & \qquad
d_{00} &= \tilde{d}_{00} + \textstyle{\frac{V}{\lambda'\mu U}} \tilde{c}_{0\ell} \Phir_{\ell 1} \\
c_{0i} &= \tilde{c}_{0i} &
d_{i0} &= \tilde{d}_{i0} \\
c_{10} &= -\tilde{c}_{10}+\tilde{d}_{11}-\tilde{c}_{1\ell}\Phir_{\ell 1} \phi(a_{10}) &
d_{01} &= -\tilde{d}_{01}+\tilde{c}_{11}-\phi(a_{01})\Phil_{1\ell}\tilde{d}_{\ell 1} \\
c_{1i} &= \tilde{c}_{1i}-\textstyle{\frac{1}{U}}\tilde{c}_{0i}+\textstyle{\frac{\lambda'}{V}} \tilde{d}_{00}\Phil_{1\ell}\acheck_{\ell i} &
d_{i1} &= \tilde{d}_{i1} - \textstyle{\frac{1}{V}}\tilde{d}_{i0} + \textstyle{\frac{1}{\lambda'\mu U}} \ahat_{i\ell} \Phir_{\ell 1} \tilde{c}_{00} \\
c_{j0} &= -\tilde{c}_{j0}+\tilde{d}_{j1}-V\tilde{c}_{j1} &
d_{0j} &= -\tilde{d}_{0j}+\tilde{c}_{1j}-U \tilde{d}_{1j} \\
& \qquad -\tilde{c}_{j\ell} \Phir_{\ell 1} \phi(a_{10}) - V
\Phil_{j\ell} \tilde{d}_{\ell 1} &&
\qquad -\phi(a_{01})\Phil_{1\ell}\tilde{d}_{\ell j}-U\tilde{c}_{1\ell}
\Phir_{\ell j} \\
c_{ji} &= \tilde{c}_{ji} &
d_{ij} &= \tilde{d}_{ij} \\
e_{00} &= \tilde{e}_{00} &
f_{00} &= \tilde{f}_{00} \\
e_{0i} &= \tilde{e}_{1i} &
f_{i0} &= \tilde{f}_{i1} \\
e_{11} &= -\textstyle{\frac{1}{U}}\tilde{e}_{01}+\tilde{e}_{11}-\textstyle{\frac{1}{\mu U}}\tilde{e}_{00} &
f_{11} &= -\textstyle{\frac{1}{V}}\tilde{f}_{10}+\tilde{f}_{11}-\textstyle{\frac{\mu}{V}} \tilde{f}_{00}
\\
& \qquad
-\textstyle{\frac{\lambda'}{V}} \tilde{d}_{00}\Phil_{1\ell}\tilde{d}_{\ell 1} + \textstyle{\frac{\lambda'\mu U}{V}} \tilde{f}_{00}\phi(a_{10}) &&
\qquad
+\textstyle{\frac{1}{\lambda'\mu U}}\tilde{c}_{1\ell}\Phir_{\ell 1}\tilde{c}_{00}+\textstyle{\frac{UV}{\lambda'\mu}} \phi(a_{01})\tilde{e}_{00} \\
e_{1j} &= -\textstyle{\frac{1}{U}} \tilde{e}_{0j}+\tilde{e}_{1j} -\textstyle{\frac{\lambda'}{V}} \tilde{d}_{00}\Phil_{1\ell}\tilde{d}_{\ell j} &
f_{j1} &= -\textstyle{\frac{1}{V}}\tilde{f}_{j0}+\tilde{f}_{j1} + \textstyle{\frac{1}{\lambda'\mu U}} \tilde{c}_{j\ell}\Phir_{\ell 1}\tilde{c}_{00} \\
e_{j_1j_2} &= \tilde{e}_{j_1j_2} &
f_{j_2j_1} &= \tilde{f}_{j_2j_1}.
\end{alignat*}
Here $1\leq i\leq n$, $2\leq j\leq n$, and $2\leq j_1\leq j_2\leq n$;
also, $\phi = \phi_B$, $\Phil = \Phil_B$, $\Phir = \Phir_B$, and all
expressions involving $\ell$ are summations over $1\leq\ell\leq n$. 
To see that the above identification represents a tame isomorphism, we note, as in the proof of Theorem~\ref{thm:main}, that it can be expressed as a composition of generator replacements beginning at $\tilde{\alg}$ and ending at $\alg$:
\begin{itemize}
\item
replace the $\tilde{a}$'s by the $a$'s;
\item
then
$\tilde{e}_{0i}$, $\tilde{f}_{i0}$ by $e_{1i}$, $f_{i1}$ for $1\leq i\leq n$;
\item
then the remaining $\tilde{e}$'s, $\tilde{f}$'s by the remaining $e$'s, $f$'s;
\item
then $\tilde{c}_{i0}$, $\tilde{d}_{0i}$ by $c_{i0}$, $d_{0i}$ for $1\leq i\leq n$;
\item
then $\tilde{c}_{1i}$, $\tilde{d}_{i1}$ by $c_{1i}$, $d_{i1}$ for $1\leq i\leq n$;
\item
then $\tilde{c}_{00}$, $\tilde{d}_{00}$ by $c_{00}$, $d_{00}$;
\item
then the remaining $\tilde{c}$'s, $\tilde{d}$'s by the remaining $c$'s, $d$'s.
\end{itemize}
An inspection of the above identification shows that each of these maps is a tame isomorphism, and so their composition is as well.
\end{proof}

\subsection{Comparison of transverse DGA conventions}
\label{ssec:eens}

In \cite{EENStransverse}, a version of the transverse DGA is presented
that differs slightly from the version given in this paper. The two
transverse DGAs agree up to signs and powers of $\lambda$ and $\mu$,
and are in fact stable tame isomorphic except for a shift of $\lambda$
to $-\lambda$ and $\mu$ to $\mu^{-1}$. Here we compare the conventions
and demonstrate this isomorphism. Although the transverse DGA in
\cite{EENStransverse}, which is written there as $(KC\alg^-,\d^-)$, is
defined for general transverse links, we will consider only the
single-component knot case for simplicity.

We recall the definition of the transverse DGA from
\cite{EENStransverse}, which we decorate with tildes where appropriate
for clarity of comparison. Thus for instance let $\tilde{\alg}_n$ be the
tensor algebra $R[U,V]$ generated by $n(n-1)$ formal variables
$\tilde{a}_{ij}$, $1\leq i\neq j\leq n$. Let
$\tilde{\phi} :\thinspace B_n \to \Aut\tilde{\alg}_n$ be the homomorphism
defined by
\[
\tilde{\phi}_{\sigma_k} :\thinspace
\begin{cases}
\tilde{a}_{ki} \mapsto \ta_{k+1,i}-\ta_{k+1,k}\ta_{ki}, & i\neq k,k+1 \\
\ta_{ik} \mapsto \ta_{i,k+1}-\ta_{ik}\ta_{k,k+1}, & i\neq k,k+1 \\
\ta_{k+1,i} \mapsto \ta_{ki}, & i\neq k,k+1 \\
\ta_{i,k+1} \mapsto \ta_{ik}, & i\neq k,k+1 \\
\ta_{k,k+1} \mapsto -\ta_{k+1,k} & \\
\ta_{k+1,k} \mapsto -\ta_{k,k+1} & \\
\ta_{ij} \mapsto \ta_{ij}, & i,j \neq k,k+1;
\end{cases}
\]
note that this agrees mod $2$ with the homomorphism $\phi$ introduced
in Section~\ref{sec:defs}. For $B\in B_n$, let $\tilde{\phi}_B$ be the
image of $B$ under this map, let $\tilde{\phi}_B^{\ext}$ be the
corresponding automorphism of $\tilde{\alg}_{n+1}$ obtained by including $B_n$
into $B_{n+1}$, and define matrices
$\tPhil_B(\tA),\tPhir_B(\tA)$ by
\begin{align*}
\tilde{\phi}_B^\ext(\ta_{i,n+1}) &= \sum_{\ell=1}^n (\tPhil_B(\tA))_{i\ell}
\ta_{\ell,n+1} \\
\tilde{\phi}_B^\ext(\ta_{n+1,i}) &= \sum_{\ell=1}^n \ta_{n+1,\ell}
(\tPhir_B(\tA))_{\ell i}.
\end{align*}
Define four more $n\times n$ matrices $\tAU,\tAV,\tBU,\tBV$ by
\begin{alignat*}{2}
(\tAU)_{ij} &= \begin{cases} \mu \ta_{ij} & i>j \\
\mu + U & i=j \\
U \ta_{ij} & i<j
\end{cases}
& \hspace{1in}
(\tAV)_{ij} &= \begin{cases} \mu V \ta_{ij} & i>j \\
1+\mu V & i=j \\
\ta_{ij} & i<j
\end{cases} \\
(\tBU)_{ij} &= \begin{cases} \mu \tb_{ij} & i>j \\
0 & i=j \\ U \tb_{ij} & i<j
\end{cases}
&
(\tBV)_{ij} &= \begin{cases} \mu V \tb_{ij} & i>j \\
0 & i=j \\
\tb_{ij} & i<j.
\end{cases}
\end{alignat*}
(Here $\tA,\tB$ have $2,0$ along the diagonal, respectively.)
The transverse DGA $(KC\alg^-,\td^-)$ defined in \cite{EENStransverse}
has generators $\{\ta_{ij}\}_{1\leq i\neq j\leq n}$ of degree $0$,
$\{\tb_{ij}\}_{1\leq i\neq j\leq n}$ and $\{\tilde{d}_{ij}\}_{1\leq
  i,j\leq n}$ of degree $1$, and $\{\tilde{f}_{ij}\}_{1\leq i,j\leq
  n}$ of degree $2$, with differential:
\begin{align*}
\td^-(\tA) &= 0, \\
\td^-(\tB) &=
- \Lam_B^{-1} \cdot \tA \cdot \Lam_B + \tilde{\phi}_B(\tA), \\
\td^-(\tD) &= \tAV \cdot \Lam_B + \tAU \cdot \tPhir_B(\tA), \\
\td^-(\tF) &=
\tBV \cdot(\tPhir_B(\tA))^{-1} + \tBU \cdot \Lam_B^{-1} -
\tPhil_B(\tA) \cdot \tD \cdot \Lam_B^{-1} + \Lam_B^{-1} \cdot \tD \cdot
(\tPhir_B(\tA))^{-1}.
\end{align*}

\begin{proposition}
The version of the transverse DGA from \cite{EENStransverse}
\label{prop:eens}
is stable
tame isomorphic to the version from this paper, once we replace
$\lambda$ by $-\lambda$ and $\mu$ by $\mu^{-1}$ in the latter.
\end{proposition}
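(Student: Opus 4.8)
The plan is to reduce to DGA~(\ref{DGA2}) of Proposition~\ref{prop:modifiedDGA} and compare it term by term with the DGA $(KC\alg^-,\td^-)$ of \cite{EENStransverse}. By Proposition~\ref{prop:modifiedDGA}, the transverse DGA of $B$ is stable tame isomorphic to DGA~(\ref{DGA2}), which has exactly the same list of generators as $(KC\alg^-,\td^-)$ --- the $a_{ij}$ ($i\neq j$) in degree $0$, the $b_{ij}$ ($i\neq j$) in degree $1$, the $d_{ij}$ in degree $1$, and the $f_{ij}$ in degree $2$ --- together with a differential of the same overall shape. So it suffices to produce a tame isomorphism between DGA~(\ref{DGA2}), after the ring substitution $\lambda\mapsto-\lambda$, $\mu\mapsto\mu^{-1}$, and $(KC\alg^-,\td^-)$.

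First I would record the dictionary between the two collections of matrices. A direct check shows that $\mu\mapsto\mu^{-1}$ together with the rescalings $a_{ij}\mapsto -\ta_{ij}$ and $b_{ij}\mapsto\mu\,\tb_{ij}$ carries $\A$ (with $-2$ on the diagonal) to $-\tA$, carries $\hat{\B}$ and $\check{\B}$ to $\tBU$ and $\tBV$ exactly, and carries $\Ahat$ and $\Acheck$ to $-\mu^{-1}\tAU$ and $-\mu^{-1}\tAV$. The stray sign $-1$ and the powers of $\mu$ appearing here, together with the discrepancy between this paper's $\Lam_B=\diag(\lambda\mu^{-w(B)},1,\dots,1)$ and the normalization used in \cite{EENStransverse}, are exactly what the shift $\lambda\mapsto-\lambda$ is there to absorb; and since the closure of $B$ is a knot, Proposition~\ref{prop:lam} lets $\Lam_B$ be replaced by any diagonal matrix with the appropriate determinant, which removes the remaining freedom in matching the two normalizations.

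The heart of the matter is the comparison of the braid-group data: $\phi_B$ versus $\tilde{\phi}_B$, and $\Phil_B,\Phir_B$ versus $\tPhil_B(\tA),\tPhir_B(\tA)$. These agree only mod $2$, so one cannot match them by any substitution of the generators. Instead, starting from the clean identities $\phi_B(\A)=\Phil_B\cdot\A\cdot\Phir_B$ and $\tilde{\phi}_B(\tA)=\tPhil_B(\tA)\cdot\tA\cdot\tPhir_B(\tA)$ (which follow from Lemma~\ref{lem:philphir} and its $\tilde{\phi}$-analogue by writing $\A=\A_\geq+\A_\leq$), I would prove, by induction on braid-word length --- verifying the case $B=\sigma_k$ from the explicit formula for $\phi_{\sigma_k}$ and \cite[Lemma~4.6]{NgKCH1}, then $B=\sigma_k^{-1}$, then using the chain rule for $\Phil,\Phir$ --- matrix identities of the form
\[
\Phil_B\big|_{a\mapsto-\ta}=\pm\,\Del(\epsilon(B))\cdot\tPhil_B(\tA),
\qquad
\Phir_B\big|_{a\mapsto-\ta}=\pm\,\tPhir_B(\tA)\cdot\Del(\epsilon'(B)),
\]
for suitable sign vectors $\epsilon(B),\epsilon'(B)$ (for $B=\sigma_k$ one gets the vector with a single $-1$ in slot $k$; along a braid word the signs and the $\epsilon(B)$ compose predictably, the permutation $s(B)$ entering as in the proof of Proposition~\ref{prop:lam}). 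Conjugating the entire DGA $(KC\alg^-,\td^-)$ of the fixed braid $B$ by the fixed sign matrix $\Del(\epsilon(B))$ is a tame automorphism, so these diagonal corrections are harmless, exactly as in the proof of Proposition~\ref{prop:lam}; the leftover overall sign is a fixed element of $\{\pm1\}$ absorbed into the identification. (Specializing $U=V=1$ here recovers the comparison of sign conventions for the knot DGA that is implicit in \cite{NgKCH1,Ngframed}, as remarked in Section~\ref{ssec:invproof2}.)

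With this dictionary in hand, the proof concludes by checking that the candidate isomorphism $\Theta$ --- on generators $a_{ij}\mapsto-\ta_{ij}$, $b_{ij}\mapsto\mu\,\tb_{ij}$, and suitable scalar multiples of $\tilde{d}_{ij},\tilde{f}_{ij}$, possibly precomposed with the conjugation by $\Del(\epsilon(B))$ and a short sequence of elementary automorphisms --- intertwines $\d^-$ and $\td^-$ on each of $\tB,\tD,\tF$. The verifications for $\tB$ and $\tD$ are immediate from the dictionary; the one for $\tF$, which involves $(\tPhir_B(\tA))^{-1}$ and the auxiliary matrices $\tBU,\tBV$, is the longest but is routine. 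The main obstacle is the sign bookkeeping of the previous paragraph: making precise, and proving by induction, the exact relationship between the two sign conventions for the braid-group action. Everything else is matrix manipulation of the kind already carried out in Section~\ref{ssec:modifiedDGA} and in \cite{Ngframed}.
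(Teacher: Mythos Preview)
Your approach is essentially the same as the paper's: reduce to DGA~(\ref{DGA2}) via Proposition~\ref{prop:modifiedDGA}, compare the two braid-group actions by a diagonal sign correction proved by induction on braid-word length, absorb that diagonal correction via Proposition~\ref{prop:lam} (which is where the $\lambda\mapsto-\lambda$ appears), and then write down the explicit tame isomorphism on generators.

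The one place where the paper is sharper is the key sign lemma. You posit two separate sign vectors $\epsilon(B),\epsilon'(B)$ and a floating overall $\pm$; the paper pins this down completely as Lemma~\ref{lem:eens}: defining a representation $R:B_n\to GL_n(\Z)$ by $\sigma_k(e_k)=e_{k+1}$, $\sigma_k(e_{k+1})=-e_k$, and setting $v_B=R(B^{-1})(e_1+\cdots+e_n)$, one has
\[
\tPhil_B(\tA)=\Del(v_B)\cdot\Phil_B(\A),\qquad \tPhir_B(\tA)=\Phir_B(\A)\cdot\Del(v_B)^{-1},
\]
so the \emph{same} diagonal matrix appears on both sides (your $\epsilon(B)=\epsilon'(B)=v_B$ and the global sign is $+1$). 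This coincidence is not accidental: it is forced by the compatibility $\tilde{\phi}_B(\tA)=\tPhil_B(\tA)\cdot\tA\cdot\tPhir_B(\tA)$, and it is what makes a single conjugation by $\Del(v_B)$ clean up both $\Phil$ and $\Phir$ simultaneously. The explicit determinant $\det(-\Del(v_B))=(-1)^{w(B)+n(B)}=-1$ then makes precise your remark that ``the shift $\lambda\mapsto-\lambda$ is there to absorb'' the discrepancy. Your scalar dictionary on the $b$ generators differs from the paper's by a harmless factor of $\mu$ (the paper takes $\tB=\Del\cdot\B\cdot\Del^{-1}$, $\tD=\mu\,\D\cdot\Del^{-1}$, $\tF=\mu\,\Del\cdot\F$), but both choices are tame and lead to the same verification.
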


We first present a lemma relating the two homomorphisms
$\phi,\tilde{\phi}$ and the matrices $\Phil,\Phir,\tPhil,\tPhir$.
Let $R :\thinspace B_n \to GL_n(\Z)$ be the representation given by
\[
R(\sigma_k)(e_i) = \begin{cases}
e_{k+1} & i=k \\
-e_k & i=k+1 \\
e_i & i\neq k,k+1
\end{cases}
\]
where $e_i$ is the usual ``basis'' vector in $\Z^n$ with $1$ in the
$i$-th coordinate and $0$ elsewhere.

\begin{lemma}
Set $\tA = -\A$; that is, $\ta_{ij} = -a_{ij}$ for all $i\neq j$. Then
\label{lem:eens}
\begin{align*}
\tPhil_B(\tA) &= \Del(v_B) \cdot \Phil_B(\A), \\
\tPhir_B(\tA) &= \Phir_B(\A) \cdot \Del(v_B)^{-1}, \\
\tilde{\phi}_B(\tA) &= - \Del(v_B) \cdot \phi_B(\A) \cdot \Del(v_B)^{-1},
\end{align*}
where $v_B = R(B^{-1})(e_1+\cdots+e_n)$ and $\Del(v_B)$ is the
diagonal $n\times n$ matrix with diagonal entries given by $v_B$.
\end{lemma}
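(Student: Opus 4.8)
The plan is to prove the three identities together by induction on the length of a braid word for $B$, exactly as Lemma~\ref{lem:philphir} is proved by following \cite[Prop.~4.7]{NgKCH1}. I would organize the argument around the third identity, which is the ``master'' statement: it says that $\tilde{\phi}_B$ is obtained from $\phi_B$ by composing with the fixed sign change $a_{ij}\mapsto -a_{ij}$ and the $B$-dependent diagonal rescaling $a_{ij}\mapsto (v_B)_i (v_B)_j^{-1} a_{ij}$. Once this is known for all braids, the identities for $\tPhil_B(\tA)$ and $\tPhir_B(\tA)$ fall out by a formal manipulation: regard $B\in B_n$ as a braid in $B_{n+1}$ fixing the strand $n+1$, so that $v_B^{\ext} = (v_B,1)$; comparing the $(i,n+1)$ entries (resp.\ the $(n+1,i)$ entries) on the two sides of the third identity read in $B_{n+1}$, and extracting the coefficient of each $\ta_{\ell,n+1}$ (resp.\ $\ta_{n+1,\ell}$), gives precisely $\tPhil_B(\tA) = \Del(v_B)\cdot\Phil_B(\A)$ (resp.\ $\tPhir_B(\tA) = \Phir_B(\A)\cdot\Del(v_B)^{-1}$), with the single sign in the third identity consumed by the substitution $\ta_{\ell,n+1} = -a_{\ell,n+1}$. (Alternatively one could prove the first two identities directly by induction using the chain rule for $\Phil,\Phir,\tPhil,\tPhir$, and then recover the third from the evident tilde-analogue of Lemma~\ref{lem:philphir}; I expect either route to work.)

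To prove the third identity by induction, I would first handle the base case $B = \sigma_k^{\pm 1}$. Here $v_{\sigma_k^{\pm 1}}$ is computed explicitly and differs from $(1,\dots,1)$ by a single sign, and the identity is checked by direct comparison of the defining rules for $\phi_{\sigma_k}$ and $\tilde{\phi}_{\sigma_k}$; these rules agree except on the generators involving strands $k$ and $k+1$, and on those they differ exactly by the signs reproduced by the rescaling coming from $v_{\sigma_k}$. The case $\sigma_k^{-1}$ then follows from the $\sigma_k$ case together with the compatibility of $\phi$ and $\tilde{\phi}$ with braid composition. For the inductive step, write $B = B_1 B_2$ and assume the identity for $B_1$ and $B_2$. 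Plugging the inductive hypotheses into the chain rule, the two diagonal rescalings must be merged into one; this uses the conjugation relation that underlies \cite[Lemma~3.2]{Ngframed}, namely that conjugating the diagonal rescaling attached to a vector $w$ by $\phi_{B_1}$ produces the rescaling attached to the $s(B_1)$-permuted vector, together with the consistency identity relating $v_{B_1}$, $v_{B_2}$, and $v_{B_1 B_2}$ that is immediate from $R$ being a homomorphism. The merged rescaling is then exactly the one associated to $v_{B_1 B_2}$, and likewise for $\tilde{\phi}_B(\tA)$ itself.

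The main obstacle is the sign-and-permutation bookkeeping rather than any conceptual point: one must keep the signed-permutation matrices $R(B)$, the underlying permutations $s(B)\in S_n$, and the $\pm 1$-vectors $v_B$ mutually consistent through every composition, so that the diagonal rescalings telescope precisely to $\Del(v_B)^{\pm 1}$ and all the signs introduced by $\tA = -\A$ cancel as claimed. None of the individual verifications is difficult, but --- as with the proof of Lemma~\ref{lem:philphir} and the invariance arguments --- the computation is lengthy and sensitive to the conventions, so I would fix all sign and index conventions carefully before starting.
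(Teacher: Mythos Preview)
Your proposal is correct, and in fact the alternative you mention in parentheses is exactly the route the paper takes: the paper proves the first two identities by induction on the length of $B$ using the chain rules
\[
\Phil_{B\sigma_k}(\A) = \Phil_{\sigma_k}(\phi_B(\A))\cdot\Phil_B(\A),\qquad
\tPhil_{B\sigma_k}(\tA) = \tPhil_{\sigma_k}(\tilde{\phi}_B(\tA))\cdot\tPhil_B(\tA)
\]
together with the explicit $2\times 2$ block form of $\Phil_{\sigma_k}(\phi_B(\A))$ and $\tPhil_{\sigma_k}(\tilde{\phi}_B(\tA))$, and then deduces the third identity from the first two via the tilde-analogue of Lemma~\ref{lem:philphir}, namely $\tilde{\phi}_B(\tA)=\tPhil_B(\tA)\cdot\tA\cdot\tPhir_B(\tA)$.

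Your primary route reverses the order of dependence---proving the third identity first by induction and then reading off the first two by passing to $B_{n+1}$. This is also valid, but the inductive step for the third identity needs two auxiliary facts you only gesture at: the commutation $\phi_{B_1}\circ c_v = c_{s(B_1)v}\circ\phi_{B_1}$ (where $c_v$ is the diagonal rescaling $a_{ij}\mapsto v_iv_j^{-1}a_{ij}$), and the cocycle identity $v_{B_1B_2}=v_{B_1}\cdot(s(B_1)v_{B_2})$ for the signed-permutation representation $R$. Both are easy, but they constitute an extra layer of bookkeeping that the paper's route sidesteps: in the paper's induction the only entry of $\tilde{\phi}_B(\tA)$ that enters the chain rule is the single off-diagonal one sitting in the $2\times 2$ block, and that entry is available from the inductive hypothesis via the product formula. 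Either organization works; the paper's is a bit more concrete and keeps the induction self-contained on the $\Phil,\Phir$ side.
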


\begin{proof}
The proofs of the expressions for $\tPhil_B(\tA)$ and $\tPhir_B(\tA)$
are straightforward inductions on the length of the braid word $B$.
For $\tPhil_B(\tA)$ (with a similar proof for $\tPhir_B(\tA)$), use
the chain rules
\[
\Phil_{B\sigma_k}(\A) = \Phil_{\sigma_k}(\phi_B(\A)) \cdot
\Phil_B(\A),
\qquad
\tPhil_{B\sigma_k}(\tA) =
\tPhil_{\sigma_k}(\tilde{\phi}_B(\tA)) \cdot \tPhil_B(\tA),
\]
cf.\ \cite[Prop.~4.4]{NgKCH1}, and the fact that
$\Phil_{\sigma_k}(\phi_B(\A))$
and $\tPhil_{\sigma_k}(\tilde{\phi}_B(\tA))$ are the identity matrix except
for the $2\times 2$ submatrices formed by rows $k,k+1$ and columns
$k,k+1$, where they are
$\left( \begin{smallmatrix} -\phi_B(a_{k+1,k}) & -1 \\ 1 & 0
\end{smallmatrix} \right)$ and
$\left( \begin{smallmatrix} -\tilde{\phi}_B(\ta_{k+1,k}) & 1 \\ 1 & 0
\end{smallmatrix} \right)$ respectively, cf.\ \cite[Lemma~4.6]{NgKCH1}.
The expression for $\tilde{\phi}_B(\tA)$ follows from the expressions
for $\tPhil_B(\tA)$ and $\tPhir_B(\tA)$, along with the identity
$\tilde{\phi}_B(\tA) = \tPhil_B(\tA) \cdot \tA \cdot \tPhir_B(\tA)$,
cf.\ Lemma~\ref{lem:philphir}.
\end{proof}

\begin{proof}[Proof of Proposition~\ref{prop:eens}]
Let $B$ be a braid whose closure is a knot, and let $\Del = \Del(v_B)$
be the diagonal matrix defined in Lemma~\ref{lem:eens}.
Let $(\tilde{\alg},\td^-)$ be the transverse DGA from
\cite{EENStransverse} as defined above; let $(\alg,\d^-)$ be the
version of our transverse DGA presented as (\ref{DGA2}) in
Proposition~\ref{prop:modifiedDGA} above, but with $\mu$ replaced by
$\mu^{-1}$ and $\Lam_B$ replaced
by $-\Lam_B \cdot \Del$:
\begin{align*}
\d^-(\A) &= 0 \\
\d^-(\B) &= \Del^{-1} \cdot \Lam_B^{-1} \cdot \A \cdot \Lam_B
\cdot \Del - \phi_B(\A) \\
\d^-(\D) &= -(\Acheck|_{\mu\mapsto\mu^{-1}}) \cdot \Lam_B \cdot \Del -
(\Ahat|_{\mu\mapsto\mu^{-1}})\cdot\Phir_B(\A) \\
\d^-(\F) &= (\check{\B}|_{\mu\mapsto\mu^{-1}}) \cdot (\Phir_B(\A))^{-1}
+ (\hat{\B}|_{\mu\mapsto\mu^{-1}}) \cdot \Del^{-1}\cdot \Lam_B^{-1} \\
& \qquad -
\Phil_B(\A) \cdot\D\cdot \Del^{-1}\cdot\Lam_B^{-1} +
\Del^{-1}\cdot\Lam_B^{-1}\cdot\D\cdot(\Phir_B)^{-1}.
\end{align*}
 Note that if $B$ has index $n$ and
writhe $w$, then $-\Del(v_B)$ has determinant $(-1)^{w+n} = -1$, and
so replacing $\Lam_B$ by $-\Lam_B \cdot \Del$ is equivalent to
replacing $\lambda$ by $-\lambda$, by Proposition~\ref{prop:lam}.

It thus suffices to show that $(\alg,\d^-)$ and $(\tilde{\alg},\td^-)$ are
tamely isomorphic. But under the tame isomorphism between $\alg$ and
$\tilde{\alg}$ given by
$\tA = -\A$, $\tB = \Del\cdot\B\cdot\Del^{-1}$, $\tD = \mu
\D\cdot\Del^{-1}$, $\tF = \mu\Del\cdot\F$, we have
\begin{alignat*}{2}
\tA^U &= -\mu(\Ahat|_{\mu\mapsto\mu^{-1}}) & \hspace{0.5in} \tA^V &=
-\mu(\Acheck|_{\mu\mapsto\mu^{-1}}) \\
\tB^U &=
\mu\Del\cdot(\hat{\B}|_{\mu\mapsto\mu^{-1}})\cdot\Del^{-1} &
\tB^V &= \mu \Del\cdot
(\check{\B}|_{\mu\mapsto\mu^{-1}})\cdot\Del^{-1};
\end{alignat*}
then Lemma~\ref{lem:eens}, along with the definitions of $\d^-$ and
$\td^-$, gives $\td^-(\tA)=\d^-(\tA)$, $\td^-(\tB)=\d^-(\tB)$,
$\td^-(\tD)=\d^-(\tD)$, and $\td^-(\tF)=\d^-(\tF)$.
\end{proof}

\section{Properties}
\label{sec:symmetries}

In this section, we present some basic properties of transverse homology.

\begin{proposition}
If $T$ is a destabilizable transverse knot (i.e., it is the
stabilization of another transverse knot), then the double-hat
transverse DGA of $T$ is trivial. \label{prop:stabdoublehat}
In particular,
\[
\HTdoublehat_*(T) = 0.
\]
\end{proposition}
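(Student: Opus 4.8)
The plan is to exhibit, for a carefully chosen braid representative of $T$, a degree-$1$ generator of the double-hat transverse DGA whose differential becomes a unit once $U=V=0$; this forces the whole complex to be acyclic.

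Since transverse stabilization corresponds to \emph{negative} braid stabilization (cf.\ Example~\ref{ex:unknot}), a destabilizable transverse knot $T=S(T')$ is the closure of a braid of the form $\tilde B = B\sigma_0^{-1}\in B_{n+1}$, where $B\in B_n$ represents $T'$ and is viewed inside $B_{n+1}$ with strands labelled $0,1,\dots,n$, so that $B$ involves only strands $1,\dots,n$. By Theorem~\ref{thm:infty}(\ref{infty1}) the stable tame isomorphism type of $(\CTdoublehat_*,\ddoublehat)$ depends only on $T$, and stable tame isomorphism preserves homology (Proposition~\ref{prop:quasi}); so it suffices to show that $(\CTdoublehat_*(\tilde B),\ddoublehat(\tilde B))$ is acyclic for this particular $\tilde B$, which also makes sense of the statement ``the double-hat transverse DGA of $T$ is trivial.''

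I would then focus on the degree-$1$ generator $c_{00}$ and compute its differential in $\CTdoublehat_*(\tilde B)=\CTminus_*(\tilde B)|_{U=V=0}$, i.e.\ the $(0,0)$ entry of $\Ahat-\Lam_{\tilde B}\cdot\Phil_{\tilde B}\cdot\Acheck$ with $U=V=0$. Since $\Lam_{\tilde B}$ is diagonal and $(\Ahat)_{00}=-1-\mu U$, this equals $-1-(\Lam_{\tilde B})_{00}\,(\Phil_{\tilde B}\Acheck)_{00}|_{U=V=0}$, so the crux is that $(\Phil_{\tilde B})_{00}=0$. To see this, use the chain rule $\Phil_{\tilde B}=\Phil_{\sigma_0^{-1}}(\phi_B(\A))\cdot\Phil_B(\A)$: a direct check gives $\phi_{\sigma_0^{-1}}^{\ext}(a_{0,n+1})=a_{1,n+1}$, so row $0$ of $\Phil_{\sigma_0^{-1}}$ is $(0,1,0,\dots,0)$; on the other hand $\phi_B$ fixes $a_{0,n+1}$ and never introduces $a_{0,n+1}$ into $\phi_B(a_{m,n+1})$ for $m\ge 1$, so $(\Phil_B)_{p0}=\delta_{p0}$. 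These two have disjoint support, whence $(\Phil_{\tilde B})_{00}=0$. (Equivalently, this can be read off from the formula for $\tilde\partial^{\infty}(c_{00})$ in the proof of Proposition~\ref{prop:HTinftyinv}, which carries no constant multiple of $(\Phil_{\tilde B})_{00}$.) Finally, at $U=V=0$ the matrix $\Acheck$ is strictly upper triangular with $-\mu$ on the diagonal, so $(\Acheck)_{m0}|_{U=V=0}$ is $-\mu$ for $m=0$ and $0$ for $m>0$; hence $(\Phil_{\tilde B}\Acheck)_{00}|_{U=V=0}=-\mu(\Phil_{\tilde B})_{00}=0$ and $\ddoublehat(c_{00})=-1$. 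Now any finitely generated semifree DGA over a ring that has a degree-$1$ generator $x$ with $\partial x$ a unit is acyclic: the map $y\mapsto(\partial x)^{-1}xy$ is a chain homotopy from the identity to $0$, by a one-line Leibniz computation. Applying this with $x=c_{00}$ gives $H_*(\CTdoublehat_*(\tilde B))=0$, so by the reduction above $\HTdoublehat_*(T)=0$.

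The only genuine work is the middle step: getting the chain rule for $\Phil$ together with the matrix $\Phil_{\sigma_0^{-1}}$ of the destabilizing crossing exactly right, and checking that strand $0$ decouples so that $(\Phil_{\tilde B})_{00}=0$. (Note that this is precisely where negative stabilization differs from positive stabilization, for which $\CTdoublehat$ is \emph{not} trivial.) The surrounding steps --- the invariance reduction, the $U=V=0$ form of $\Acheck$, and the chain-homotopy argument for acyclicity --- are routine.
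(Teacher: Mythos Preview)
Your proof is correct and follows essentially the same approach as the paper: both show that $\ddoublehat(c_{00})=-1$ for the braid $\tilde B=B\sigma_0^{-1}$ and conclude that the DGA is trivial. The paper simply quotes the formula $\dminus(c_{00})=-1-\mu U-\lambda\mu^{-w(B)+1}V(\Phil_B)_{1\ell}a_{\ell 0}$ already computed in the proof of Proposition~\ref{prop:HTinftyinv} and sets $U=V=0$, whereas you rederive the key vanishing $(\Phil_{\tilde B})_{00}=0$ directly via the chain rule for $\Phil$ and also make explicit the contracting homotopy $y\mapsto -c_{00}\,y$; both of these are left implicit in the paper's two-line argument.
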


\begin{proof}
As in the proof of Proposition~\ref{prop:HTinftyinv}, we calculate
that if $B\in B_n$ and $\tilde{B} = B\sigma_0^{-1}$ is a transverse
stabilization,
then we have
\[
\dminus(c_{00}) =
-1-\mu U-\lambda \mu^{-w(B)+1} V(\Phil_B)_{1\ell} a_{\ell 0}.
\]
Setting $U=V=0$ gives $\ddoublehat(c_{00}) = -1$.
\end{proof}

Note the formal similarity of Proposition~\ref{prop:stabdoublehat} to
the result of Chekanov \cite{Ch02} that the Legendrian DGA for a
stabilized Legendrian knot in $(\R^3,\xi_{\text{std}})$ is trivial.

We next examine various symmetries of the transverse DGA.

\begin{proposition}
If $\alpha$ is a unit, then we have a chain isomorphism of $R[U,V]$-algebras
\label{prop:rescale}
\[
(\CTminus_*(B;\lambda,\mu,U,V),\dminus) \cong
(\CTminus_*(B;\lambda
\alpha^{-\sl(B)},\mu/\alpha,U\alpha,V/\alpha),\dminus)
\]
with corresponding isomorphisms for $\CThat$ and $\CTdoublehat$, and a
chain isomorphism of $R[U^{\pm 1},V^{\pm 1}]$-algebras
\[
(\CTinfty_*(B;\lambda,\mu,U,V),\dinfty) \cong
(\CTinfty_*(B;\lambda\alpha,\mu/\alpha,U\alpha,V/\alpha),\dinfty).
\]
\end{proposition}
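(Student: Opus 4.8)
The plan is to realize the isomorphism as a composition of two maps of differential graded algebras: first a change of the distinguished diagonal matrix $\Lam_B'$ appearing in $\dinfty$, justified by Proposition~\ref{prop:lam}, and then an explicit rescaling of the degree-$1$ generators $d_{ij}$ and the degree-$2$ generators $f_{ij}$. To begin, I would record how the matrices entering $\dinfty$ transform under the substitution $(\lambda,\mu,U,V)\mapsto(\lambda\alpha,\mu/\alpha,U\alpha,V/\alpha)$. Since $(\mu/\alpha)(U\alpha)=\mu U$, the matrices $\Ahat$ and $\hat{\B}$ are unchanged; since $V/\alpha$ and $\mu/\alpha$ each acquire a factor $\alpha^{-1}$, the matrices $\Acheck$ and $\check{\B}$ are each multiplied by $\alpha^{-1}$. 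For the diagonal matrix, recall that in the infinity theory $\lambda$ has already been replaced by $\lambda(U/V)^{-(\sl(B)+1)/2}$, so the $(1,1)$ entry of $\Lam_B'$ is $\lambda\mu^{-w(B)}(U/V)^{-(\sl(B)+1)/2}$; under the substitution, $\lambda\mu^{-w(B)}$ picks up a factor $\alpha^{1+w(B)}$ and $(U/V)^{-(\sl(B)+1)/2}$ picks up a factor $\alpha^{-(\sl(B)+1)}$, so this entry is scaled by $\alpha^{1+w(B)-(\sl(B)+1)}=\alpha^{w(B)-\sl(B)}=\alpha^{n}$, using $\sl(B)=w(B)-n$. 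Thus after the substitution the distinguished diagonal matrix equals $\diag(\alpha^{n},1,\ldots,1)\cdot\Lam_B'$, which has the same determinant as $\alpha\,\Lam_B'$.

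Because the closure of $B$ is a knot, Proposition~\ref{prop:lam} applies; its proof goes through unchanged for $\dinfty$, and is unaffected by the overall factors of $\alpha^{-1}$ on $\Acheck$ and $\check{\B}$, since multiplication by the central unit $\alpha^{-1}$ commutes with conjugation by a diagonal matrix $\Del(v)$. Hence, up to tame isomorphism, I may replace the distinguished diagonal matrix of $(\CTinfty_*(B;\lambda\alpha,\mu/\alpha,U\alpha,V/\alpha),\dinfty)$ by $\alpha\,\Lam_B'$. It then remains to verify that the $R[U^{\pm1},V^{\pm1}]$-algebra automorphism $\Psi$ of $\CTinfty_*(B)$ that fixes every $a_{ij}$, $b_{ij}$, $c_{ij}$, $e_{ij}$ and sends $d_{ij}\mapsto\alpha\,d_{ij}$, $f_{ij}\mapsto\alpha\,f_{ij}$ intertwines $\dinfty$ of the original DGA with $\dinfty$ of the DGA just produced. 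This is a routine matrix computation. The scalar $\alpha$ is central, so it drops out of the conjugation $(\alpha\Lam_B')\,\phi_B(\A)\,(\alpha\Lam_B')^{-1}$ and $\Psi$ respects $\dinfty(\B)$. In $\dinfty(\C)=\Ahat-\alpha\Lam_B'\cdot\Phil_B\cdot(\alpha^{-1}\Acheck)$ the factors $\alpha$ and $\alpha^{-1}$ cancel, and in $\dinfty(\E)=\hat{\B}-\C-\alpha\Lam_B'\cdot\Phil_B\cdot\D$ the factor $\alpha$ in $\alpha\Lam_B'$ is matched by the factor $\alpha$ that $\Psi$ introduces on the $d_{ij}$, so $\Psi$ respects $\dinfty(\C)$ and $\dinfty(\E)$; and the residual powers of $\alpha$ in $\dinfty(\D)$ and $\dinfty(\F)$ are exactly those absorbed by the rescaling $d_{ij}\mapsto\alpha d_{ij}$, $f_{ij}\mapsto\alpha f_{ij}$. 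Composing $\Psi$ with the tame isomorphism supplied by Proposition~\ref{prop:lam} gives the asserted chain isomorphism of $R[U^{\pm1},V^{\pm1}]$-algebras.

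I do not anticipate a genuine obstacle; the one delicate point is the exponent bookkeeping in the first step, where one must combine the substitution $\lambda\mapsto\lambda(U/V)^{-(\sl(B)+1)/2}$ built into $\CTinfty_*$ with the identity $w(B)-\sl(B)=n$ to see that the $(1,1)$ entry of $\Lam_B'$ scales by exactly $\alpha^{n}$, which is precisely what makes Proposition~\ref{prop:lam} applicable. (This is also why the infinity substitution rescales $\lambda$ by $\alpha$ rather than by $\alpha^{-\sl(B)}$ as in the $\CTminus_*$ version: the extra power of $\alpha$ is furnished by the $(U/V)$-factor.) The same three-step scheme (transform the data; normalize the diagonal matrix via Proposition~\ref{prop:lam}; rescale $\D$ and $\F$) proves the $\CTminus_*$ assertion, now with $\lambda\mapsto\lambda\alpha^{-\sl(B)}$, and the $\CThat_*$ and $\CTdoublehat_*$ statements follow by specializing $U$ (and $V$).
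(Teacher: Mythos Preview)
Your proposal is correct and follows essentially the same approach as the paper: track how the substitution affects $\Ahat$, $\Acheck$, $\hat{\B}$, $\check{\B}$, and the diagonal matrix, then absorb the resulting factors of $\alpha$ via the tame rescaling $\D\mapsto\alpha\D$ together with an application of Proposition~\ref{prop:lam} on the diagonal matrix. You are in fact somewhat more careful than the paper's write-up, which mentions only the rescaling of $\D$ and not of $\F$ (nor the scaling of $\check{\B}$); your inclusion of $f_{ij}\mapsto\alpha f_{ij}$ is needed for the $\dinfty(\F)$ check, and your exponent bookkeeping $\alpha^{1+w(B)-(\sl(B)+1)}=\alpha^{n}$ is exactly the computation underlying the paper's terse statement that $\Lam_B$ is replaced by $\diag(\lambda\mu^{-w(B)}\alpha^{n(B)},1,\ldots,1)$.
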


\begin{proof}
Replacing $(\lambda,\mu,U,V)$ by $(\lambda
\alpha^{-\sl(B)},\mu/\alpha,U\alpha,V/\alpha)$ keeps $\Ahat$
unchanged, multiplies $\Acheck$ by $\alpha^{-1}$, and replaces
$\Lam_B$ by
$\diag(\lambda\mu^{-w(B)}\alpha^{n(B)},1,\ldots,1)$. In the differential for the
transverse DGA, this has the same effect as keeping $\Ahat$ and
$\Acheck$ unchanged, replacing $\D$ by $\alpha \D$ (a tame isomorphism), and replacing
$\Lam_B$ by
$\diag(\lambda\mu^{-w(B)}\alpha^{n(B)-1},\alpha^{-1},\ldots,\alpha^{-1})$. Since
this matrix has the same determinant,
$\lambda\mu^{-w(B)}$, as $\Lam_B$, the result follows from Proposition~\ref{prop:lam}.
\end{proof}

It follows from Proposition~\ref{prop:rescale} that localizing at $V$ (allowing $V$ to be
invertible) in the transverse DGA is equivalent to setting $V=1$. In
particular, one can set $V=1$ in infinity transverse homology
without losing any information.

For our next symmetry, if $(\CTminus_*,\dminus)$ is a transverse
DGA, then we can construct another DGA $(\CTminus_*,\dminus_{\op})$
where $\dminus_{\op}$ is defined on generators of $\CTminus_*$ as
$\dminus$ but with every word reversed and appropriate signs
introduced. More precisely, on any graded tensor algebra over
$R[U,V]$, we can define an $R[U,V]$-module involution, $\op$, by
\[
\op(x_1x_2\cdots x_n) = (-1)^{\sum_{i<j} |x_i||x_j|} (x_n\cdots x_2x_1),
\]
where $x_1,x_2,\ldots,x_n$ are generators of the tensor algebra.
Now define $\dminus_{\op}$ on $\CTminus_*$ by
\[
\dminus_{\op} = \op \circ \dminus \circ \op.
\]
Since $\op(ab) = (-1)^{|a||b|}(\op b)(\op a)$ for any $a,b$, it is
easy to check that $\dminus_{\op}$ satisfies the Leibniz rule and
$(\dminus_{\op})^2=0$.

For $B$ a braid, the transverse complex $\CTminus_*(B)$ involves parameters $\lambda,\mu,U,V$; we make this explicit, where relevant, by writing $\CTminus_*(B;\lambda,\mu,U,V)$ for $\CTminus_*(B)$.

\begin{proposition}
We have a chain isomorphism of $R[U,V]$-algebras \label{prop:op}
\[
(\CTminus_*(B;\lambda^{-1},\mu^{-1},V,U),\dminus(B;\lambda^{-1},\mu^{-1},V,U))
\cong (\CTminus_*(B;\lambda,\mu,U,V),\dminus_{\op}(B;\lambda,\mu,U,V)).
\]
\end{proposition}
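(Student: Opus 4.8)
The plan is to realize the isomorphism as a composition $\Psi = \op \circ \Phi$, in which $\Phi$ is an explicit anti-isomorphism between the two transverse DGAs carrying their \emph{ordinary} differentials. Write $\dminus^{(1)}$ and $\dminus^{(2)}$ for the transverse differential on $\CTminus_*(B)$ with parameters $(\lambda^{-1},\mu^{-1},V,U)$ and $(\lambda,\mu,U,V)$ respectively, so the differential on the right-hand side of the statement is $\dminus_{\op} = \op\circ\dminus^{(2)}\circ\op$. Introduce the \emph{index-transpose} automorphism $\tau$ of $\CTminus_*(B)$, the $R[U,V]$-algebra automorphism interchanging $a_{ij}\leftrightarrow a_{ji}$, $b_{ij}\leftrightarrow b_{ji}$, $c_{ij}\leftrightarrow d_{ji}$, and $e_{ij}\leftrightarrow f_{ji}$, and set $\rho := \op\circ\tau$; this is a degree-preserving graded algebra \emph{anti}-automorphism with $\rho^2 = \mathrm{id}$, which on the degree-$0$ subalgebra $\alg_n$ is just the opposite-index anti-homomorphism $a_{ij}\mapsto a_{ji}$. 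Write $\mathbf{M}^{T}$ for the formal transpose of a matrix of generators. The one parameter identity behind everything is that the diagonal matrices satisfy $\Lam_B^{(1)} = (\Lam_B^{(2)})^{-1}$.

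\textbf{Step 1 (a braid symmetry).} A case-by-case inspection of the seven lines defining $\phi_{\sigma_k}$ shows that $\rho\circ\phi_{\sigma_k} = \phi_{\sigma_k}\circ\rho$ for each generator $\sigma_k$. Since $\psi\mapsto\rho\circ\psi\circ\rho^{-1}$ is a group automorphism of $\Aut\alg_n$ fixing each $\phi_{\sigma_k}$, it fixes every $\phi_B$, so $\rho\circ\phi_B = \phi_B\circ\rho$ for all braids $B$. Feeding this through the extended automorphism $\phi_B^{\ext}$ and comparing coefficients of $a_{\ell,n+1}$ and $a_{n+1,\ell}$ — the same bootstrap as in Lemma~\ref{lem:eens}, with base case $B=\sigma_k^{\pm1}$ from the explicit matrices $\Phil_{\sigma_k},\Phir_{\sigma_k}$ of \cite[Lemma~4.6]{NgKCH1} and inductive step from the chain rule for $\Phil,\Phir$ (cf.\ \cite[Prop.~4.4]{NgKCH1}) — yields
\[
(\Phir_B)_{\ell i} = \rho\bigl((\Phil_B)_{i\ell}\bigr)\qquad\text{for all }i,\ell.
\]
Directly from the definitions one also checks that under $(\mu,U,V)\mapsto(\mu^{-1},V,U)$ followed by formal transposition the matrices $\Ahat,\Acheck$ are interchanged up to the scalar $\mu$, i.e.\ $\rho(\Ahat^{(1)}) = \mu^{-1}(\Acheck^{(2)})^{T}$ and $\rho(\Acheck^{(1)}) = \mu^{-1}(\Ahat^{(2)})^{T}$, and likewise $\rho(\hat{\B}^{(1)}) = \mu^{-1}(\check{\B}^{(2)})^{T}$, $\rho(\check{\B}^{(1)}) = \mu^{-1}(\hat{\B}^{(2)})^{T}$.

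\textbf{Step 2 (the anti-isomorphism and the chain-map check).} Let $\Phi\colon(\CTminus_*(B),\dminus^{(1)})\to(\CTminus_*(B),\dminus^{(2)})$ be the graded $R[U,V]$-algebra anti-homomorphism determined by $\Phi(\A)=\A^{T}$, $\Phi(\B)=\B^{T}$, $\Phi(\C)=\mu^{-1}\D^{T}$, $\Phi(\D)=\mu^{-1}\C^{T}$, $\Phi(\E)=\mu^{-1}\F^{T}$, $\Phi(\F)=\mu^{-1}\E^{T}$ — equivalently, $\Phi$ is $\rho$ followed by the rescaling $\C,\D,\E,\F\mapsto\mu^{-1}\C,\mu^{-1}\D,\mu^{-1}\E,\mu^{-1}\F$. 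It sends free generators to free generators up to units, so it is an anti-isomorphism of algebras and preserves degree. I would then check $\Phi\circ\dminus^{(1)} = \dminus^{(2)}\circ\Phi$ on each of the six families $\A,\B,\C,\D,\E,\F$; by the signed Leibniz rule this forces the identity on all of $\CTminus_*(B)$. For $\C$, for instance, Step~1 together with the fact that $\rho$ is a sign-free algebra anti-homomorphism on the degree-$0$ part gives
\[
\Phi\bigl(\dminus^{(1)}(\C)\bigr) = \rho\bigl(\Ahat^{(1)}-\Lam_B^{(1)}\Phil_B\Acheck^{(1)}\bigr) = \mu^{-1}\bigl(\Acheck^{(2)}-\Ahat^{(2)}\Phir_B(\Lam_B^{(2)})^{-1}\bigr)^{T} = \dminus^{(2)}\bigl(\Phi(\C)\bigr),
\]
where $\Lam_B^{(1)}=(\Lam_B^{(2)})^{-1}$ is exactly what makes the powers of $\lambda$ and $\mu$ come out right. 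The $\B$ identity uses $\rho\circ\phi_B=\phi_B\circ\rho$, the $\D$ identity is the mirror of the $\C$ one, and the $\E,\F$ identities use in addition the $\hat{\B},\check{\B}$ relations from Step~1. The diagonal-entry discrepancies — the $-2$'s along the diagonal of $\A$ and the diagonal normalizations of $\Ahat,\Acheck,\hat{\B},\check{\B}$ — are absorbed exactly as in the proofs of Propositions~\ref{prop:d2} and~\ref{prop:deg0}.

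\textbf{Step 3 (conclusion).} Once $\Phi$ is known to be an anti-isomorphism of DGAs, set $\Psi := \op\circ\Phi$. Since $\op$ and $\Phi$ are both degree-preserving graded anti-automorphisms with matching Koszul signs, $\Psi$ is an honest $R[U,V]$-algebra isomorphism, agreeing with $\Phi$ on generators, and
\[
\Psi\circ\dminus^{(1)} = \op\circ\dminus^{(2)}\circ\Phi = \bigl(\op\circ\dminus^{(2)}\circ\op\bigr)\circ\bigl(\op\circ\Phi\bigr) = \dminus_{\op}\circ\Psi,
\]
which is the asserted chain isomorphism of $R[U,V]$-algebras. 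I expect the real obstacle to be the chain-map verification in Step~2: one must hold the substitution $(\lambda,\mu,U,V)\mapsto(\lambda^{-1},\mu^{-1},V,U)$, the factors of $\mu^{\pm1}$, the matrix transposes, and the sign conventions consistent all at once, and the $\E,\F$ cases involve triple matrix products. The one genuinely new ingredient, the identity $(\Phir_B)_{\ell i}=\rho((\Phil_B)_{i\ell})$ of Step~1, is a routine induction along the lines of Lemma~\ref{lem:eens}.
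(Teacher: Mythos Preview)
Your proof is correct and follows essentially the same approach as the paper. The paper defines an algebra isomorphism $\psi$ with $\psi(\A)=\A^T$, $\psi(\B)=\B^T$, $\psi(\C)=\mu\D^T$, $\psi(\D)=\mu\C^T$, $\psi(\E)=\mu\F^T$, $\psi(\F)=\mu\E^T$ and verifies $\psi\circ\dminus_{\op}=\tilde{\dminus}\circ\psi$ directly; your $\Psi=\op\circ\Phi$ is precisely $\psi^{-1}$ (hence the $\mu^{-1}$ in place of $\mu$), and your key identity $(\Phir_B)_{\ell i}=\rho((\Phil_B)_{i\ell})$ is the paper's Lemma~\ref{lem:op}(\ref{op3}). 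The only organizational difference is that you factor the map as $\op$ composed with an explicit anti-isomorphism $\Phi$ intertwining the ordinary differentials, whereas the paper works with the algebra isomorphism and $\dminus_{\op}$ from the outset; since every product in the differential pairs a degree-$0$ factor with a degree-$0$ or degree-$1$ factor, no Koszul signs intervene and the two formulations are interchangeable.
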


\noindent
In other words, one can switch the roles of $U$ and $V$ in the
transverse DGA, at the price of reversing word order and
inverting $\lambda$ and $\mu$.

We first establish a lemma.
Define an algebra isomorphism $\psi :\thinspace \CTminus_* \to \CTminus_*$ by
\begin{alignat*}{3}
\psi(\A) &= \A^T, & \quad
\psi(\B) &= \B^T, &\quad
\psi(\C) &= \mu \D^T, \\
\psi(\D) &= \mu \C^T, & \quad
\psi(\E) &= \mu \F^T, & \quad
\psi(\F) &= \mu \E^T,
\end{alignat*}
where $T$ represents transpose; that is, $\psi(a_{ij}) = a_{ji}$ and
so forth. Note that $\psi \circ \op = \op \circ \psi$. Extend $\psi$
and $\op$ to matrices as usual: $(\psi(\mathbf{M}))_{ij} =
\psi(\mathbf{M}_{ij})$, and similarly for $\op$.

\begin{lemma}
\begin{enumerate}
\item \label{op1}
If all of the entries in the matrix $\mathbf{M}_1$ (or all of the entries in $\mathbf{M}_2$) have degree $0$, then $(\mathbf{M}_1\mathbf{M}_2)^T = \op((\op\mathbf{M}_2)^T(\op\mathbf{M}_1)^T)$.
\item \label{op2}
$\Ahat|_{\mu\mapsto\mu^{-1}, U\mapsto V, V\mapsto U} = \mu^{-1} \psi(\Acheck^T)$ and
$\Acheck|_{\mu\mapsto\mu^{-1}, U\mapsto V, V\mapsto U} = \mu^{-1} \psi(\Ahat^T)$.
\item \label{op3}
$\psi \op \Phil_B = (\Phir_B)^T$ and
$\psi \op \Phir_B = (\Phil_B)^T$.
\end{enumerate}
\label{lem:op}
\end{lemma}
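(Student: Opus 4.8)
I would handle the three parts separately, doing (\ref{op1}) and (\ref{op2}) by direct computation and saving the real work for (\ref{op3}).

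For (\ref{op1}) the plan is to compare the two sides entry by entry. The $(i,j)$-entry of $(\mathbf{M}_1\mathbf{M}_2)^T$ is $\sum_k (\mathbf{M}_1)_{jk}(\mathbf{M}_2)_{ki}$, while the $(i,j)$-entry of $\op((\op\mathbf{M}_2)^T(\op\mathbf{M}_1)^T)$ is $\op\!\bigl(\sum_k \op((\mathbf{M}_2)_{ki})\,\op((\mathbf{M}_1)_{jk})\bigr)$; since $\op$ is an anti-automorphism that squares to the identity, each summand on the right equals $(-1)^{|(\mathbf{M}_2)_{ki}|\,|(\mathbf{M}_1)_{jk}|}(\mathbf{M}_1)_{jk}(\mathbf{M}_2)_{ki}$, and the hypothesis that all entries of $\mathbf{M}_1$ (or of $\mathbf{M}_2$) have degree $0$ makes every sign trivial. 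For (\ref{op2}) I would just plug in the explicit formulas for $\Ahat$ and $\Acheck$ and use that $\psi$ sends $a_{ij}$ to $a_{ji}$ and fixes $\lambda,\mu,U,V$: both $\Ahat|_{\mu\mapsto\mu^{-1},\,U\mapsto V,\,V\mapsto U}$ and $\mu^{-1}\psi(\Acheck^T)$ have $(i,j)$-entry equal to $a_{ij}$ for $i>j$, $-1-\mu^{-1}V$ for $i=j$, and $\mu^{-1}V a_{ij}$ for $i<j$, and the other identity is checked the same way. These are routine.

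For (\ref{op3}), the substantive part, I would first note that the two asserted identities are equivalent: transposing $\psi\op\Phil_B = (\Phir_B)^T$ and applying $\psi\op$ (which acts entrywise, commutes with transpose, and squares to the identity on the degree-$0$ subalgebra $\alg_n$ where $\Phil_B,\Phir_B$ live) turns it into $(\Phil_B)^T = \psi\op\Phir_B$. So it suffices to prove $\psi\op\Phil_B = (\Phir_B)^T$ for all $B$. Second, I would record the key auxiliary fact that on $\alg_n$ the map $\rho := \psi\op$ is the order-reversing anti-automorphism with $\rho(a_{ij})=a_{ji}$, and that $\rho$ commutes with $\phi_B$ for every braid $B$: one checks directly on each generator that $\rho\circ\phi_{\sigma_k^{\pm1}} = \phi_{\sigma_k^{\pm1}}\circ\rho$ (the crossing between the two linear terms is exactly compensated by the word-reversal in $\op$), and since $\phi$ is a homomorphism from $B_n$ this propagates to all $B$. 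Then I would prove $\psi\op\Phil_B = (\Phir_B)^T$ by induction on the length of a braid word for $B$. The base cases $B=\sigma_k^{\pm1}$ use the explicit $2\times2$-block forms of $\Phil_{\sigma_k^{\pm1}}$ and $\Phir_{\sigma_k^{\pm1}}$ from \cite[Lemma~4.6]{NgKCH1}: applying $\psi\op$ and transposing $\Phil_{\sigma_k^{\pm1}}$ manifestly returns $\Phir_{\sigma_k^{\pm1}}$, the only nontrivial entry being the one built from $a_{k,k+1}$ or $a_{k+1,k}$. For the inductive step $B = B'\sigma_k^{\pm1}$, I would invoke the chain rules $\Phil_{B'\sigma_k^{\pm1}} = \Phil_{\sigma_k^{\pm1}}(\phi_{B'}(\A))\cdot\Phil_{B'}$ and $\Phir_{B'\sigma_k^{\pm1}} = \Phir_{B'}\cdot\Phir_{\sigma_k^{\pm1}}(\phi_{B'}(\A))$ (cf.\ \cite[Prop.~4.4]{NgKCH1}); since all the matrices involved have degree-$0$ entries, part (\ref{op1}) lets me rewrite $\psi\op$ of the $\Phil$-product as the reversed product of transposes of $\psi\op$ of the two factors, the auxiliary fact lets me commute $\psi\op$ past $\phi_{B'}$ inside $\Phil_{\sigma_k^{\pm1}}(\phi_{B'}(\A))$, and the base case together with the inductive hypothesis then identify the result with $(\Phir_{B'\sigma_k^{\pm1}})^T$.

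I expect the inductive step of (\ref{op3}) to be the main obstacle: the order-reversal coming from $\op$, the transpose produced by part (\ref{op1}), and the inherent transpose in the chain rules all have to conspire so that $\psi\op$ of a $\Phil$-type product lands on the correct $\Phir$-type product (with the factors in the right order), rather than on a $\Phil$-type product or a product with the factors swapped. Keeping careful track of which matrices have only degree-$0$ entries, so that part (\ref{op1}) applies with no residual signs, is the delicate bookkeeping point. As a sanity check I would verify the final identities explicitly on a short braid such as $B=\sigma_1\sigma_2\in B_3$ before trusting the general argument.
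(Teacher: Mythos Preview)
Your treatment of (\ref{op1}) and (\ref{op2}) matches the paper exactly: both are dispatched as immediate from the definitions.

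For (\ref{op3}) your argument is correct, but the paper takes a shorter route. You set up the anti-involution $\rho=\psi\op$, verify $\rho\circ\phi_B=\phi_B\circ\rho$ on braid generators, and then run an induction on the length of $B$ using the chain rules for $\Phil$ and $\Phir$. The paper instead observes that your auxiliary fact, rewritten as $\psi\phi_B(a_{ij})=\op\phi_B(a_{ji})$, can be applied \emph{once} inside $\alg_{n+1}$ to the defining equations $\phi_B^{\ext}(a_{i,n+1})=\sum_\ell(\Phil_B)_{i\ell}a_{\ell,n+1}$ and $\phi_B^{\ext}(a_{n+1,i})=\sum_\ell a_{n+1,\ell}(\Phir_B)_{\ell i}$: reading off the coefficients of $a_{n+1,\ell}$ on both sides gives $\psi((\Phil_B)_{i\ell})=\op((\Phir_B)_{\ell i})$ directly, with no induction needed. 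So the key ingredient is the same commutation identity you isolated; the difference is that the paper leverages the very definition of $\Phil_B,\Phir_B$ via the extra strand to bypass the chain-rule bookkeeping. Your approach has the virtue of being self-contained at the level of $\alg_n$ and of making the compatibility of $\psi\op$ with matrix products (via part (\ref{op1})) explicit, at the cost of more moving parts.
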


\begin{proof}
(\ref{op1}) is immediate from the definition of $\op$, and (\ref{op2}) is immediate from the definition of $\Ahat$ and $\Acheck$. (\ref{op3}) can be seen from the fact that the homomorphism $\phi$ satisfies
$\psi\phi(a_{ij}) = \op\phi(a_{ji})$ for all $i,j$ by the construction of $\phi$.
\end{proof}

\begin{proof}[Proof of Proposition~\ref{prop:op}]
For clarity, denote the differential $\d^-$ on $\CTminus_*$ with $(\lambda,\mu,U,V)$ replaced by $(\lambda^{-1},\mu^{-1},V,U)$ by $\tilde{\d}^-$. We claim that $\psi\circ\d^-_{\op} = \tilde{\d}^-\circ\psi$.

This is a fairly routine claim to verify, given Lemma~\ref{lem:op}; we will check that $\psi\circ\d^-_{\op}(\C) = \tilde{\d}^-\circ\psi(\C)$, and leave verification for the other generators of $\CTminus_*$ to the reader:
\begin{align*}
\tilde{\d}^-(\psi(\C)) &=
\mu \tilde{\d}^-(\D^T) \\
&= \mu (\Acheck^T|_{\mu\mapsto\mu^{-1}, U\mapsto V, V\mapsto U}) - \mu ((\Ahat|_{\mu\mapsto\mu^{-1}, U\mapsto V, V\mapsto U}) \cdot \Phir_B\cdot \Lam_B)^T \\
&= \psi(\Ahat) - \op(\Lam_B \cdot\op(\Phir_B)^T \cdot\op\psi(\Acheck)) \\
&=
\psi(\Ahat - \op(\Lam_B \cdot \Phil_B \cdot \Acheck)) \\
&= \psi(\d^-_{\op}(\C)).
\end{align*}
The proposition follows.
\end{proof}

We can use Proposition~\ref{prop:op} to understand the effect on the
transverse DGA of an
operation on transverse knots known as mirroring.

\begin{definition}
Let $T$ be a transverse knot represented by the closure of a braid
$B$. The \textit{transverse mirror} of $T$, written $\mir(T)$, is the
transverse knot represented by the closure of the braid obtained by
reversing the order of the letters in $B$.
\end{definition}

\noindent
Note that in the topological category, transverse mirroring takes a
knot to its orientation reverse.
In \cite{NgThurston}, transverse mirrors are defined in terms of Legendrian
approximations: the transverse mirror of a Legendrian approximation
$\Lambda$ to a transverse knot is the transverse pushoff of
$-\mu(\Lambda)$, the orientation reverse of the Legendrian mirror of
$\Lambda$. It is an easy exercise (cf.\ \cite{KhN}) to check that this
definition agrees with ours.

One example of transverse mirrors is the family of pairs of $3$-braids
considered by Birman and Menasco in \cite{BM3braid}:
$\sigma_1^u\sigma_2^v\sigma_1^w\sigma_2^{-1}$ and
$\sigma_1^w\sigma_2^v\sigma_1^u\sigma_2^{-1}$, which are related by a
negative flype and thus topologically isotopic. The transverse mirror
of the first of these is
$\sigma_2^{-1}\sigma_1^w\sigma_2^v\sigma_1^u$, which is conjugate and
thus transversely isotopic to the second.

For the next result, we borrow a definition from \cite{NgKCH1}: for $B
\in B_n$, define $B^*$ to be the image of
$B$ under the group homomorphism on $B_n$ sending $\sigma_k$ to
$\sigma_{n-k}^{-1}$ for all $k$; then the braids $B$ and $(B^*)^{-1}$
represent transverse mirrors.

\begin{proposition}
We have a chain isomorphism of $R[U,V]$-algebras
\[
(\CTminus_*((B^*)^{-1};\lambda,\mu,U,V),\dminus((B^*)^{-1};\lambda,\mu,U,V)
\cong (\CTminus_*(B;\lambda,\mu,U,V),\dminus_{\op}(B;\lambda,\mu,U,V)).
\]
In other words, if $T$ is a transverse knot, then the transverse DGA
of the transverse mirror of $T$ is the opposite of the
transverse DGA of $T$.
\label{prop:mirror}
\end{proposition}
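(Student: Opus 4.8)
## Proof proposal for Proposition~\ref{prop:mirror}

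The plan is to reduce this to Proposition~\ref{prop:op}, which already handles the $\op$-operation at the cost of inverting $\lambda,\mu$ and swapping $U,V$. So the remaining task is purely a matter of tracking how the transverse DGA transforms under the operation $B \mapsto (B^*)^{-1}$, and checking that this exactly cancels the parameter changes introduced by $\op$. Concretely, I would first work out the relationship between $\phi_{(B^*)^{-1}}$ and $\phi_B$, and correspondingly between the matrices $\Phil_{(B^*)^{-1}}, \Phir_{(B^*)^{-1}}$ and $\Phil_B, \Phir_B$. Following the analogous computation in \cite{NgKCH1} (where the operation $B \mapsto (B^*)^{-1}$ and its effect on $\phi$ and on $\Phil,\Phir$ is already analyzed in the knot contact homology setting), there should be an algebra isomorphism — essentially conjugation by the ``antidiagonal'' permutation $i \mapsto n+1-i$ composed with transposition and a sign change on the $a_{ij}$ — that intertwines $\phi_{(B^*)^{-1}}$ with $\phi_B$ up to the substitution $\mu \mapsto \mu^{-1}$, and that swaps the roles of $\Phil$ and $\Phir$ appropriately.

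The key steps, in order, would be: (1) establish the matrix identities relating $\Phil_{(B^*)^{-1}}, \Phir_{(B^*)^{-1}}$ to $\Phir_B, \Phil_B$ under the antidiagonal/transpose/sign map — this is the content of the corresponding lemma in \cite{NgKCH1} adapted to the $U,V$-enhanced setting, and one must also check the effect on $\Ahat, \Acheck$ (which will get swapped, up to the $\mu \mapsto \mu^{-1}$ substitution and a scalar, just as in Lemma~\ref{lem:op}\,(\ref{op2})); (2) check the effect on $\Lam_B$: since $w((B^*)^{-1}) = -w(B)$ and the antidiagonal conjugation moves the special diagonal entry from position $(1,1)$ to position $(n,n)$, one invokes Proposition~\ref{prop:lam} to see that $\Lam_{(B^*)^{-1}}$ may be replaced by a diagonal matrix with the correct determinant $\lambda \mu^{w(B)} = \lambda\mu^{-w((B^*)^{-1})}$ concentrated wherever convenient; (3) write down the explicit algebra isomorphism on the full set of generators $\A,\B,\C,\D,\E,\F$ — along the lines of the map $\psi$ appearing before Lemma~\ref{lem:op}, i.e.\ antidiagonal transposition together with the swaps $\C \leftrightarrow \D$, $\E \leftrightarrow \F$ and suitable powers of $\mu$ — and verify that it intertwines $\dminus((B^*)^{-1})$ with $\dminus_{\op}(B)$, using the identities from step (1) together with Lemma~\ref{lem:op}\,(\ref{op1}) to move transposes past products; (4) finally, observe that the $\mu \mapsto \mu^{-1}$, $\lambda \mapsto \lambda^{-1}$, $U \leftrightarrow V$ substitutions produced by step (1)–(2) are precisely the substitutions appearing in Proposition~\ref{prop:op}, so composing the isomorphism just constructed with the isomorphism of Proposition~\ref{prop:op} yields the claimed chain isomorphism with \emph{unchanged} parameters $(\lambda,\mu,U,V)$.

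An alternative, possibly cleaner, route: since $(B^*)^{-1}$ and $B$ already represent transverse mirrors, and since the ``$*$'' operation was built in \cite{NgKCH1} precisely to realize mirroring on the braid level, one may be able to bypass step (1) by noting that the required matrix identities are already recorded there for the un-enhanced $\Phil,\Phir$ (which are unchanged mod the $\mu U$, $V$ decorations) and then only checking that the $U,V$ decorations transform correctly — which is exactly Lemma~\ref{lem:op}\,(\ref{op2})–(\ref{op3}). In either approach the main obstacle is step (3): verifying that the generator-level isomorphism intertwines the two differentials on the $\E$ and $\F$ generators, since their differentials $\hat{\B} - \C - \Lam\Phil\D$ and $\check{\B} - \D - \C\Phir\Lam^{-1}$ mix matrices of degree $0$ and degree $1$ and the $\op$ operation inserts signs depending on the grading; one must be careful that Lemma~\ref{lem:op}\,(\ref{op1}) applies (it requires one of the two factors to be degree $0$, which is why it is stated that way) and that the $\mu$-powers in $\psi(\C) = \mu\D^T$ etc.\ are chosen to make the $\hat{\B},\check{\B}$ terms match after the $\mu \mapsto \mu^{-1}$ substitution. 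I expect this to be routine but genuinely fiddly, in the same spirit as the verification left to the reader in the proof of Proposition~\ref{prop:op}.
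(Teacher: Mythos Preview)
Your plan is essentially the paper's own approach: reduce via Proposition~\ref{prop:op} to comparing $\dminus((B^*)^{-1};\lambda^{-1},\mu^{-1},V,U)$ with $\dminus(B;\lambda,\mu,U,V)$, invoke Proposition~\ref{prop:lam} to move the special diagonal entry of $\Lam_B$, and build an antidiagonal-conjugation isomorphism $\xi$ using the identities from \cite{NgKCH1} relating $\Phil_{(B^*)^{-1}},\Phir_{(B^*)^{-1}}$ to $\Xi(\Phil_B)^{-1},\Xi(\Phir_B)^{-1}$.

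One point where your sketch underestimates the work: the explicit map $\xi$ is \emph{not} just ``antidiagonal transposition together with the swaps $\C\leftrightarrow\D$, $\E\leftrightarrow\F$ and suitable powers of $\mu$.'' In the paper's isomorphism one has, for instance, $\xi(\A)=\Xi(\phi_B(\A))$ (so $\phi_B$ appears already on the degree-$0$ generators), and $\xi(\C)=-\mu^{-1}\Lam_B^{-1}\cdot\Xi(\C\cdot\Phir_B(\A))$, $\xi(\D)=-\mu^{-1}\Xi(\Phil_B(\A)\cdot\D)\cdot\Lam_B$ --- so $\C,\D$ are \emph{not} swapped at this stage (the swap is absorbed into the $\psi$ of Proposition~\ref{prop:op}), and each picks up a $\Phil_B$ or $\Phir_B$ factor. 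These extra factors are forced by the fact that $\xi(\Phil_{(B^*)^{-1}}) = \Xi(\Phil_B)^{-1}$ rather than $\Xi(\Phil_B)$; without them the verification for $\d^-(\C)$ and $\d^-(\D)$ does not close up. So your step (3) is where the real content lies, and the ansatz you propose would need to be corrected along these lines before the check goes through.
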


\begin{proof}
We follow similar proofs from \cite{NgKCH1,Ngframed}. Let $B$ be an
$n$-strand braid and let $\CTminus_*$ denote the usual algebra
generated by $a,b,c,d,e,f$ generators.
By
Proposition~\ref{prop:op}, it suffices to give an isomorphism
between
$(\CTminus_*,\dminus((B^*)^{-1};\lambda^{-1},\mu^{-1},V,U)$ and
$(\CTminus_*,\dminus(B;\lambda,\mu,U,V))$. For clarity, write
$\tilde{\d}^-$ for $\dminus((B^*)^{-1};\lambda^{-1},\mu^{-1},V,U)$ as in
Definition~\ref{def:transDGA}, and write $\d^-$ for
$\dminus(B;\lambda,\mu,U,V)$ as in Definition~\ref{def:transDGA} but
with $\Lam_B$ replaced by
$\diag(1,\ldots,1,\lambda\mu^{-w(B)})$. By Proposition~\ref{prop:lam},
it suffices to show that $(\CTminus_*,\d^-)$ and
$(\CTminus_*,\tilde{\d}^-)$ are chain isomorphic.

Let $\Xi$ be the operation on $n\times n$ matrices defined by
$\Xi(\mathbf{M})_{ij} = \mathbf{M}_{n+1-i,n+1-j}$, i.e., $\Xi$
conjugates by the $n\times n$ matrix with $1$'s on the NE-SW diagonal and $0$'s
everywhere else. Define an algebra isomorphism $\xi :\thinspace \CTminus \to
\CTminus$ by
\begin{alignat*}{2}
\xi(\A) &= \Xi(\phi_B(\A)), &
\qquad \xi(\B) &= -\Lam_B^{-1}\cdot\Xi(\B)\cdot\Lam_B, \\
\xi(\C) &= -\mu^{-1}\Lam_B^{-1}\cdot \Xi(\C\cdot\Phir_B(\A)), &
\xi(\D) &= -\mu^{-1} \Xi(\Phil_B(\A)\cdot\D)\cdot \Lam_B, \\
\xi(\E) &= -\mu^{-1} \Lam_B^{-1} \cdot \Xi(\F) \cdot\Lam_B, &
\xi(\F) &= -\mu^{-1} \Lam_B^{-1} \cdot \Xi(\E) \cdot\Lam_B.
\end{alignat*}
We claim that $\xi \circ \tilde{\d}^- = \d^-\circ\xi$.

From \cite[Cor.~4.5]{NgKCH1} and the proof of \cite[Prop.~6.9]{NgKCH1}, we have
\[
\xi(\Phil_{(B^*)^{-1}}(\A)) = \Xi(\Phil_{B^{-1}}(\phi_B(\A))) =
\Xi(\Phil_B(\A))^{-1},
\]
and similarly $\xi(\Phir_{(B^*)^{-1}}(\A)) = \Xi(\Phir_B(\A))^{-1}$. In
addition, one readily checks from the definitions that
\begin{align*}
\xi(\Ahat|_{\mu\mapsto\mu^{-1}, U\mapsto V, V\mapsto U}) &= \mu^{-1}
\Xi(\phi_B(\Acheck)) \\
\xi(\Acheck|_{\mu\mapsto\mu^{-1}, U\mapsto V, V\mapsto U}) &= \mu^{-1}
\Xi(\phi_B(\Ahat)).
\end{align*}
With these identities in hand, it is straightforward to prove the
claim; we check that $\xi \circ \tilde{\d}^-(\C) = \d^-\circ\xi(\C)$,
and leave verification for the other generators of $\CTminus_*$ to the
reader:
\begin{align*}
\xi(\tilde{\d}^-(\C)) &=
\xi\left((\Ahat|_{\mu\mapsto\mu^{-1}, U\mapsto V, V\mapsto U}) -
\Lam_B^{-1} \cdot \Phil_{(B^*)^{-1}}(\A) \cdot
(\Acheck|_{\mu\mapsto\mu^{-1}, U\mapsto V, V\mapsto U}) \right) \\
&= \mu^{-1} \Xi(\phi_B(\Acheck)) - \mu^{-1} \Lam_B^{-1} \cdot
\Xi(\Phil_B(\A))^{-1} \cdot \Xi(\phi_B(\Ahat)) \\
&= -\mu^{-1}\Lam_B^{-1} \Xi\left(
(\Ahat-\Xi(\Lam_B)\cdot\Phil_B(\A)\cdot\Acheck)\cdot\Phir_B(\A)\right)
\\
&= \d^-(\xi(\C)).
\end{align*}
The proposition follows.
\end{proof}

Like Proposition~\ref{prop:stabdoublehat}, Proposition~\ref{prop:mirror} may be compared to an analogous result in Legendrian contact homology, namely that the Chekanov--Eliashberg DGA for the Legendrian mirror of a Legendrian knot $\Lambda$ is the opposite of the DGA for $\Lambda$. See \cite{NgCLI}.

\section{Computations and Applications}
\label{sec:applications}

In this section, we perform some calculations with transverse homology
and demonstrate that it constitutes an effective invariant of
transverse knots.

\subsection{The augmentation polynomial of a topological knot}

The infinity transverse homology of a topological knot, a graded
algebra over $R[U^{\pm 1},V^{\pm 1}]=\Z[\lambda^{\pm 1},\mu^{\pm
  1},U^{\pm 1},V^{\pm 1}]$, is a somewhat unwieldy object
to compute. There are various ways to extract information out of
$\HTinfty$; here we highlight one, a three-variable polynomial that is
in some sense a generalization of the two-variable $A$-polynomial.

\begin{definition}
Let $K$ be a topological knot, and let
$\lambda_0,\mu_0,U_0,V_0\in\CC^*=\CC\setminus\{0\}$. We say that the DGA
$(\CTinfty_*(K),\dinfty)$ \textit{has an augmentation at}
$(\lambda_0,\mu_0,U_0,V_0)$ if there is a $\CC$-algebra map
\[
\HTinfty_0(K) \otimes_{R[U^{\pm 1},V^{\pm 1}]} \CC \to \CC,
\]
where $\CC$ is viewed as an $R[U^{\pm 1},V^{\pm 1}]$-algebra with
$\lambda,\mu,U,V$ acting as multiplication by the complex scalars
$\lambda_0,\mu_0,U_0,V_0$, respectively.

The \textit{augmentation variety} of $K$ is the set
\[
\{(\lambda_0,\mu_0,U_0) \, : \, (\CTinfty_*(K),\dinfty) \text{ has an
  augmentation at } (\lambda_0,\mu_0,U_0,1)\} \subset (\CC^*)^3.
\]
If the augmentation
variety is not complex $3$-dimensional, then the union of
its $2$-dimensional components is the zero set of the
\textit{augmentation polynomial} $\Aug_K(\lambda,\mu,U) \in
\CC[\lambda,\mu,U]$, well-defined up to constant multiplication if we
specify that it contains no repeated factors and is not divisible by
$\lambda$, $\mu$, or $U$.
\end{definition}

Note that the $V$ coordinate has been dropped from the definition of
the augmentation variety and polynomial; this is because of
Proposition~\ref{prop:rescale}, which implies that the $V$ information
is superfluous in the infinity theory.

\begin{example}
For the unknot $U$, the computation from
Example~\ref{ex:unknot} shows that $\Aug_U(\lambda,\mu,U) = -1-\mu U+\lambda
+\lambda\mu$.
\end{example}

\begin{example}
Consider the right-handed trefoil $T$ given as the closure of $\sigma_1^3
\in B_2$. The infinity transverse DGA has $\dinfty(b_{21}) =
a_{21}-\frac{\mu^3 U}{\lambda V}a_{12}$, and so $a_{21} = \frac{\mu^3
  U}{\lambda V} a_{12}$ in $\HTinfty_0(T)$. It follows that
$\HTinfty_0(T)$ is a quotient of the polynomial ring $(R[U^{\pm
  1},V^{\pm 1}])[x]$ with $x=a_{12}$. A computation of
$\dinfty(\C),\dinfty(\D)$ for the trefoil yields
\begin{align*}
\HTinfty_0(T) &\cong (\Z[\lambda^{\pm 1},\mu^{\pm 1},U^{\pm 1},V^{\pm
  1}])[x] \, / \\
& \qquad (\mu^4 U x^2+\mu^3 U x-\lambda\mu V-\lambda V^2,
\lambda\mu^2 V x-\mu^3 U x+\lambda V^2-\lambda\mu^2 U V).
\end{align*}
Up to multiplication by an overall unit and after setting $V=1$, the
resultant of the two
polynomials above in $x$, and thus the augmentation polynomial of $T$,
is
\[
\Aug_T(\lambda,\mu,U) =
(\lambda\mu^4-\mu^4)U^3+(\lambda\mu^3-\mu^3-2\lambda\mu^2) U^2+(2\lambda\mu^2+\lambda\mu+\lambda)U+(-\lambda^2\mu-\lambda^2).
\]
\end{example}

For any knot, since transverse homology descends to the knot contact
homology from \cite{Ngframed} when we set $U=V=1$, the intersection of
the augmentation variety in $(\CC^*)^3$ with the plane $U=1$ yields
the augmentation variety in $(\CC^*)^2$ considered in
\cite{Ngframed}. One might reasonably guess that the specialization
$\Aug_K(\lambda,\mu,1)$ should often equal the augmentation
polynomial $\tilde{A}_K(\lambda,\mu)$ from \cite{Ngframed}, whose
vanishing set is the augmentation variety in $(\CC^*)^2$. This is true
for the unknot and the trefoil; we do not know if it is true in
general.

From \cite{Ngframed}, $\tilde{A}_K(\lambda,\mu)$ contains the
$A$-polynomial of $K$ as a factor. One might then consider
$\Aug_K(\lambda,\mu,U)$ to be some sort of three-variable
generalization of the $A$-polynomial. A geometric interpretation for
the three-variable augmentation polynomial (in
terms of representations of $\pi_1(\R^3\setminus K)$) is currently not
known to the author.

\subsection{Transverse computations}

Here we provide evidence that transverse homology is quite effective
as an invariant of transverse knots.
More precisely, we will see that $\HThat_0$,
considered as an algebra over $R$, can be used to distinguish pairs of
transverse knots with the same classical invariants. (For unknown
reasons, it appears in examples that $\HThat_0$ is more effective at
distinguishing transverse knots than $\HTdoublehat_0$.) As an easily
implemented computational tool, we use augmentation numbers in a
similar manner to \cite{NgKCH1,Ngframed}.

\begin{definition}
Let $\kk$ be a finite field and let $\lambda_0,\mu_0$ be nonzero elements of $\kk$. For a finitely generated $R$-algebra $A$, the \textit{augmentation number} $\Aug(A,\kk,\lambda_0,\mu_0)$ is the number of $\kk$-algebra maps
$A \otimes_{\Z} \kk \to \kk$ sending $\lambda\in R$ to $\lambda_0$ and
$\mu\in R$ to $\mu_0$.
\end{definition}

\noindent
We have the following corollary of Theorem~\ref{thm:infty}.

\begin{proposition}
If $T_1,T_2$ are transverse knots and there exist
$\kk,\lambda_0,\mu_0$ for which
$\Aug(\HThat_0(T_1),\kk,\lambda_0,\mu_0) \neq
\Aug(\HThat_0(T_2),\kk,\lambda_0,\mu_0)$, then $T_1,T_2$ are not
transversely isotopic.
\label{prop:aug}
\end{proposition}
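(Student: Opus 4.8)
The plan is to deduce the proposition formally from the transverse invariance of $\HThat_0$ as an $R$-algebra, which is part~(\ref{infty1}) of Theorem~\ref{thm:infty} (resting in turn on Proposition~\ref{prop:HT0inv}, specialized at $U=0$, $V=1$). I will argue the contrapositive. Suppose $T_1$ and $T_2$ are transversely isotopic, and choose braids $B_1,B_2$ whose closures are $T_1,T_2$. By the definition of transverse knot (equivalently, by the Transverse Markov Theorem), $B_1$ and $B_2$ are connected by a finite sequence of conjugations and positive de/stabilizations, so Theorem~\ref{thm:infty}(\ref{infty1}) supplies an isomorphism of $R$-algebras $\Psi\colon \HThat_0(T_1)\xrightarrow{\ \sim\ }\HThat_0(T_2)$.

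It then remains to observe that isomorphic $R$-algebras have equal augmentation numbers. First, the numbers in question are finite, hence well-defined: each $\HThat_0(T_i)$ is a quotient of $\alg_n$, so it is a finitely generated $R$-algebra and hence finitely generated over $\Z$; thus any $\kk$-algebra map $\HThat_0(T_i)\otimes_{\Z}\kk\to\kk$ factors through the commutative abelianization and is determined by the images of the finitely many generators $a_{ij}$ together with those of $\lambda,\mu$, each lying in the finite set $\kk$. Next, base-changing $\Psi$ along $\Z\to\kk$ gives a $\kk$-algebra isomorphism $\Psi_\kk\colon \HThat_0(T_1)\otimes_\Z\kk\xrightarrow{\ \sim\ }\HThat_0(T_2)\otimes_\Z\kk$; because $\Psi$ is by definition $R$-linear, $\Psi_\kk$ sends the images of $\lambda$ and $\mu$ in the source to the images of $\lambda$ and $\mu$ in the target. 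Consequently $\varphi\mapsto \varphi\circ\Psi_\kk$ is a bijection from the set of $\kk$-algebra maps $\HThat_0(T_2)\otimes_\Z\kk\to\kk$ carrying $\lambda\mapsto\lambda_0$, $\mu\mapsto\mu_0$ onto the corresponding set for $T_1$; hence $\Aug(\HThat_0(T_1),\kk,\lambda_0,\mu_0)=\Aug(\HThat_0(T_2),\kk,\lambda_0,\mu_0)$ for every choice of $\kk$, $\lambda_0$, $\mu_0$. This is precisely the contrapositive of the asserted statement.

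There is no serious obstacle here; the argument is a routine unwinding of definitions. The one point that must be checked rather than merely asserted is that the isomorphism $\Psi$ coming from Theorem~\ref{thm:infty}(\ref{infty1}) is an isomorphism of $R$-algebras (not just of rings), so that after base change it respects the specializations at $\lambda_0$ and $\mu_0$ that define the augmentation numbers; but this is already part of the content of Theorem~\ref{thm:infty} and Proposition~\ref{prop:HT0inv}, so nothing new is needed. (Alternatively, one may cite Proposition~\ref{prop:HT0inv} directly, noting that setting $U=0$, $V=1$ in the $R[U,V]$-algebra isomorphism $\HTminus_0(B)\cong\HTminus_0(\tilde B)$ found there yields the required $R$-algebra isomorphism $\HThat_0(B)\cong\HThat_0(\tilde B)$.)
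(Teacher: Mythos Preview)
Your proof is correct and is precisely the intended argument: the paper states this proposition as an immediate corollary of Theorem~\ref{thm:infty} without giving a separate proof, and your unwinding of the definitions is exactly what is meant.
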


In practice, augmentation numbers for finitely generated, finitely
presented $R$-algebras are straightforward to calculate by
computer.  Our computations rely on
\texttt{transverse.m}, and executable \textit{Mathematica} notebooks
producing these computations are available at the author's web site.

On the next two pages, we present a table of the $13$ knot types of
arc index $9$ or fewer that are suggested by the work of \cite{Atlas}
to be transversely nonsimple; this could be conjectured to be a
complete list of such knot types (transversely nonsimple with arc index at most $9$). For each type, the table includes the guess from \cite{Atlas} for all of the distinct transverse knots of that type with the
relevant self-linking number. Of these $13$ topological knots, $6$
have been previously shown to be transversely nonsimple, all via the
transverse invariant in Heegaard Floer homology; see
\cite{Atlas,NOT,OStipsicz}.

By contrast, at least $10$ of the $13$ knots can be shown to be
transversely nonsimple by transverse homology and
Proposition~\ref{prop:aug}, including all $6$ that
can be distinguished by Heegaard Floer homology.
Indeed, as the following table shows, it suffices to compute
augmentation numbers for as small a field as $\kk = \Z/3$.\footnote{The computer program has difficulty computing augmentation numbers for braids with more than $4$ strands. To compute the augmentation numbers for the non-stabilized $5$-strand transverse representatives of $m(10_{145})$ and $12n_{591}$ in the table, we used a slightly different version of degree $0$ transverse homology than Definition~\ref{def:deg0}: it is easy to prove that if $B \in B_n$ is a product of two braids $B=B_1B_2$, then
\[
\mathit{HT}_0(B) \cong \alg_n\,/\,(\Phil_{B_1^{-1}}\cdot\Ahat - \Lam_B\cdot\Phil_{B_2}\cdot\Acheck,\,
\Acheck\cdot\Phir_{B_1^{-1}}-\Ahat\cdot\Phir_{B_2}\cdot\Lam_B^{-1}).
\]
One can use this formulation of $\mathit{HT}_0$ to eliminate enough generators of $\alg_n$ to allow for the computation of augmentation numbers for $m(10_{145})$ and $12n_{591}$. For the stabilized representatives of $m(10_{145})$ and $12n_{591}$, the computation is much easier: an argument along the lines of Proposition~\ref{prop:stabdoublehat} shows that any augmentation number $\Aug(\widehat{\mathit{HT}}_0(T),\mathbf{k},\lambda_0,\mu_0)$ for a stabilized transverse knot must be $0$ unless $\mu_0 = -1$.}

One particular knot of interest is the twist knot $m(7_2)$. Here
Ozsv\'ath and Stipsicz \cite{OStipsicz} have shown transverse nonsimplicity
using the Heegaard Floer LOSS invariant, along with a naturality
argument. (By contrast, the other $5$ knots that can be distinguished
by Heegaard Floer can all be treated by a computer program.)
Transverse homology, on the other hand, can be used to distinguish the
$m(7_2)$ knots by computer, without any geometric input.

The remaining $3$ knots of arc index $\leq 9$ that are conjectured to
be transversely nonsimple are all of the following form: the pair of
transverse knots that are conjecturally transversely nonisotopic are
related by the transverse mirror operation. Since augmentation numbers
depend only on the abelianization of the DGA, and the transverse DGAs
for transverse mirrors have the same abelianization by
Proposition~\ref{prop:mirror}, augmentation numbers will never be able
to distinguish mirrors. It is not inconceivable that the full
transverse DGA might sometimes distinguish mirrors, but this seems to
be a very delicate point. We remark that the (grid-diagram) Heegaard
Floer transverse 
invariant cannot currently distinguish transverse
mirrors, at least via the techniques of \cite{NOT}; see 
\cite[Prop.~1.2]{OST}. (It is probable that this will change in the
future with the advent of naturality results.)

In the table, for each knot type, transverse representatives that are
(conjecturally) distinct are depicted in two ways: grid diagrams
corresponding to Legendrian approximations (as taken from
\cite{Atlas}) and braid closures. (For the algorithm to get from one
to the other, see, e.g., \cite{KhN}. For braids, numbers represent
braid generators and bars represent inverses; e.g.,
$3\,3\,\overline{2}\,3\,2\,1\,1\,2\,\overline{1}$ is the $4$-braid
$\sigma_3^2\sigma_2^{-1}\sigma_3\sigma_2\sigma_1^2\sigma_2\sigma_1^{-1}$.)
The table then indicates whether
the two transverse invariants, the Heegaard Floer invariant and
transverse homology, can distinguish the transverse
representatives. In the case of a positive answer for transverse
homology, the relevant augmentation-number computation is given. Of
the $7$ knot types with a non-positive answer for the transverse Heegaard
Floer invariant, $5$ have $\widehat{\mathit{HFK}}=0$ in the relevant
bidegree, while the other $2$, $m(9_{45})$ and $10_{128}$, are
transverse mirrors. (The same result holds for Khovanov homology in
the relevant bidegree and Khovanov--Rozansky homology in the relevant
triple degree, indicating that the transverse invariants of
Plamenevskaya \cite{PlamKh} and Wu \cite{WuKR}, like the Heegaard Floer
invariants, do not distinguish these transverse knots.) 

\begin{center}
\begin{tabular}{|c||c|c|c|c|c|}
\hline
Knot & Grid & Braid & $\mathit{HFK}$? & $\mathit{HT}$? & $\mathit{HT}$ computation \\
\hline
\multirow{2}{*}{$m(7_2)$}
& \includegraphics[height=0.5in]{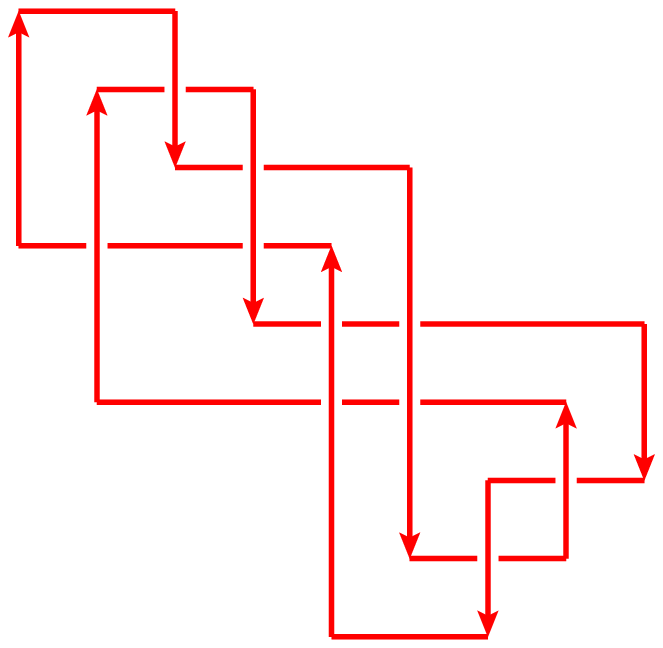} &
\rb{$3\,3\,\overline{2}\,3\,2\,1\,1\,2\,\overline{1}$}
&
\multirow{2}{*}{$\checkmark$ \cite{OStipsicz}} & \multirow{2}{*}{$\checkmark$} &
\rb{$\Aug(\HThat_0,\Z/3,2,1)=0$} \\
\cline{2-3} \cline{6-6}
& \includegraphics[height=0.5in]{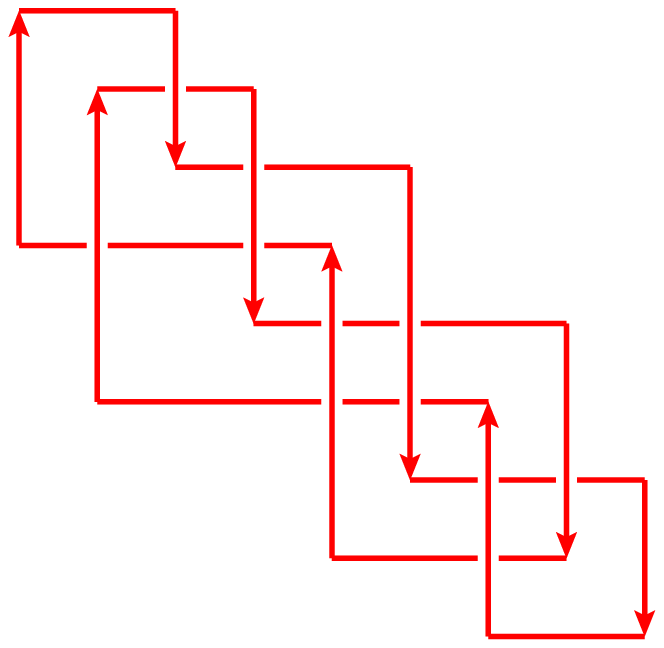} &
\rb{$3\,3\,\overline{2}\,3\,2\,\overline{1}\,2\,1\,1$}
&&&
\rb{$\Aug(\HThat_0,\Z/3,2,1)=5$} \\
\hline
\multirow{2}{*}{$m(7_6)$}
& \includegraphics[height=0.5in]{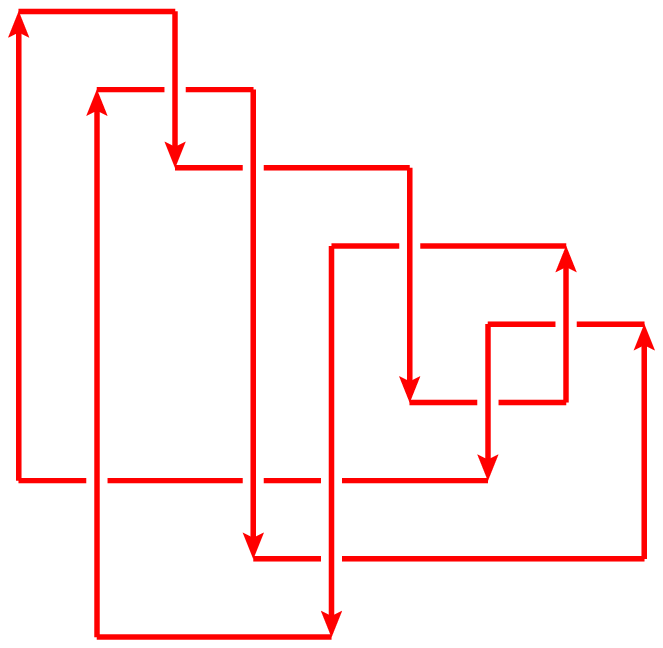} &
\rb{$1\,\overline{2}\,1\,\overline{2}\,\overline{3}\,2\,3\,3\,3$}
&
\multirow{2}{*}{\ding{55}} & \multirow{2}{*}{$\checkmark$} &
\rb{$\Aug(\HThat_0,\Z/3,2,1)=5$} \\
\cline{2-3} \cline{6-6}
& \includegraphics[height=0.5in]{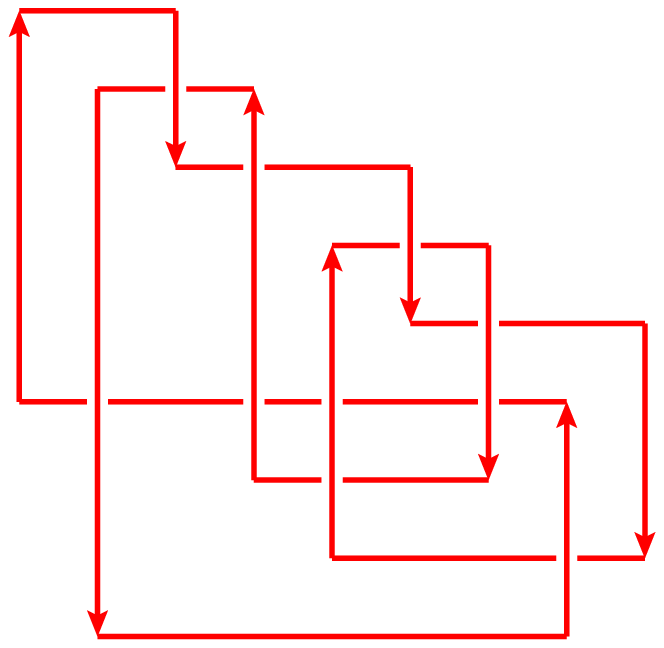} &
\rb{$1\,\overline{2}\,1\,\overline{2}\,3\,3\,3\,2\,\overline{3}$}
&&&
\rb{$\Aug(\HThat_0,\Z/3,2,1)=0$} \\
\hline
\multirow{3}{*}{$9_{44}$}
& \includegraphics[height=0.5in]{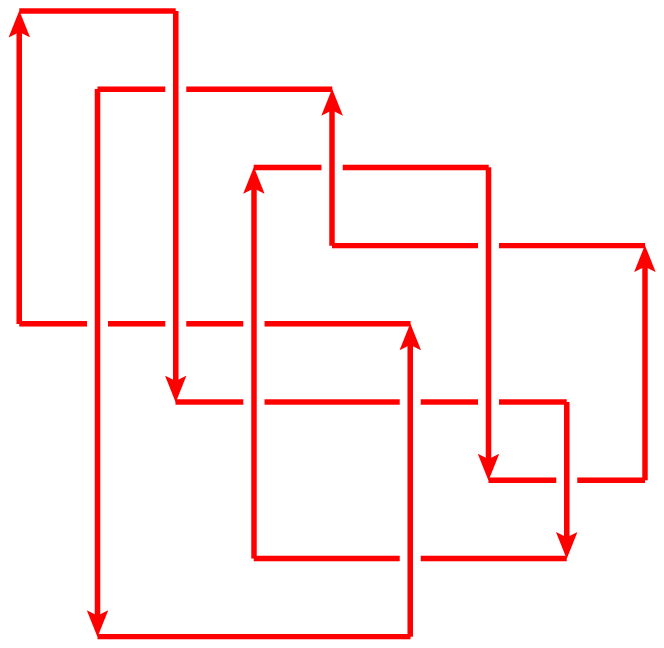} &
\rb{$\overline{3}\,1\,2\,\overline{3}\,\overline{2}\,3\,1\,\overline{2}\,\overline{3}$}
&
\multirow{3}{*}{\ding{55}} & \multirow{3}{*}{$\checkmark$/?} &
\rb{$\Aug(\HThat_0,\Z/3,2,1)=5$} \\
\cline{2-3} \cline{6-6}
& \includegraphics[height=0.5in]{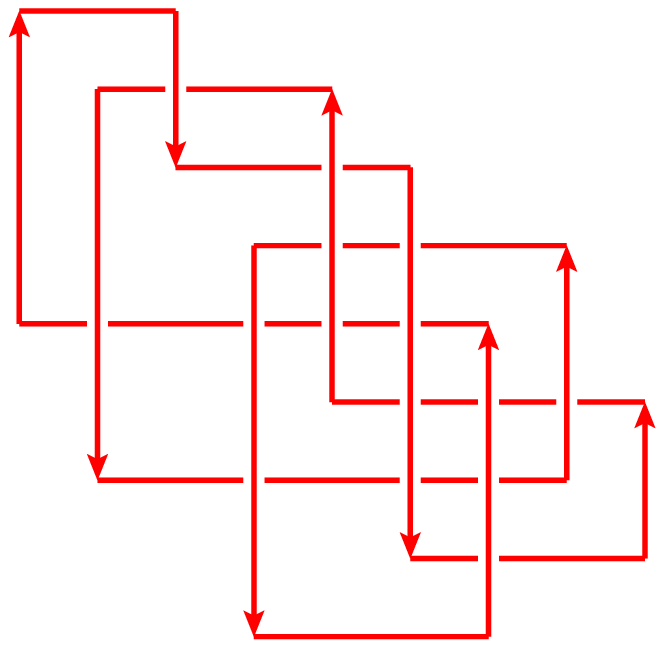} &
\rb{$\overline{2}\,\overline{3}\,2\,1\,2\,\overline{3}\,\overline{2}\,1\,\overline{2}$}
&&&
\rb{$\Aug(\HThat_0,\Z/3,2,1)=0$} \\
\cline{2-3} \cline{6-6}
& \raisebox{0.25in}{\includegraphics[height=0.5in,angle=180]{9_44tb.eps}} &
(mirror of previous braid)
&&&
$\Aug(\HThat_0,\Z/3,2,1)=0$ \\
\hline
\multirow{2}{*}{$m(9_{45})$}
& \includegraphics[height=0.5in]{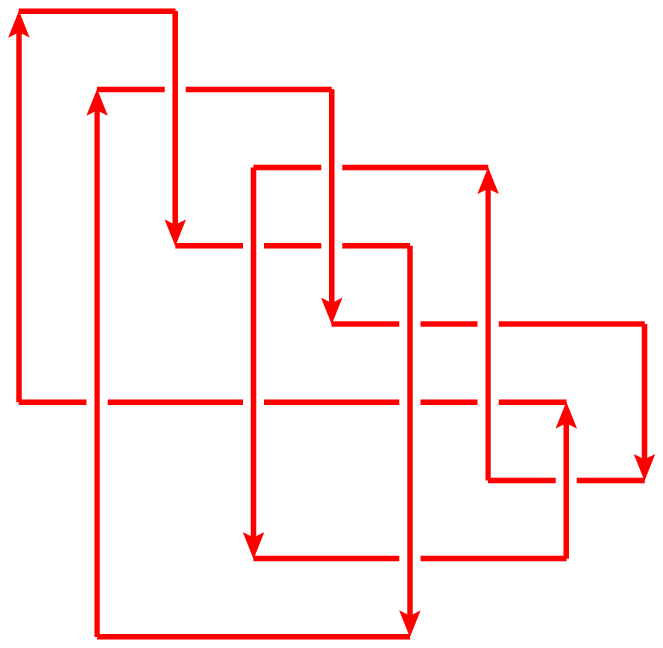} &
\rb{$2\,\overline{3}\,2\,1\,3\,\overline{2}\,3\,1\,2$}
&
\multirow{2}{*}{?} & \multirow{2}{*}{?} &
\rb{---} \\
\cline{2-3} \cline{6-6}
& \raisebox{0.25in}{\includegraphics[height=0.5in,angle=180]{9_44tb.eps}} &
(mirror of previous braid)
&&&
---  \\
\hline
\multirow{2}{*}{$9_{48}$}
& \includegraphics[height=0.5in]{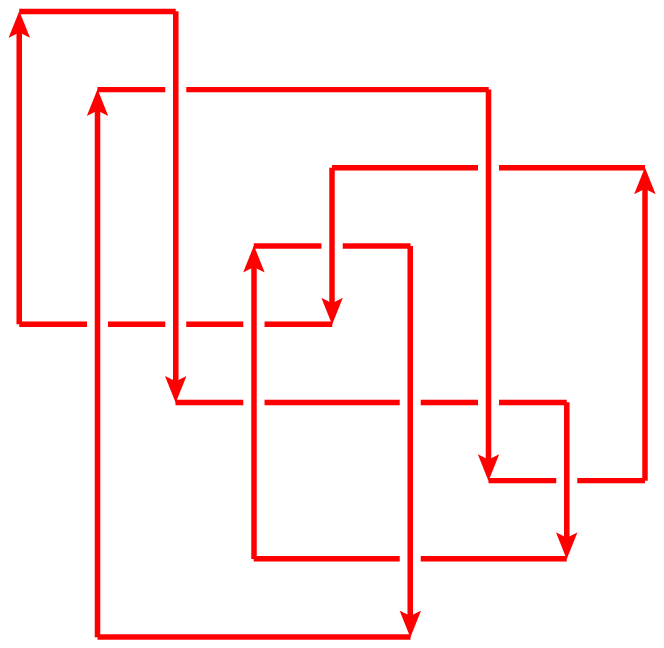} &
\rb{$\overline{2}\,3\,3\,2\,\overline{1}\,2\,\overline{3}\,2\,1\,1\,\overline{2}$}
&
\multirow{2}{*}{\ding{55}} & \multirow{2}{*}{$\checkmark$} &
\rb{$\Aug(\HThat_0,\Z/3,2,1)=4$} \\
\cline{2-3} \cline{6-6}
& \includegraphics[height=0.5in]{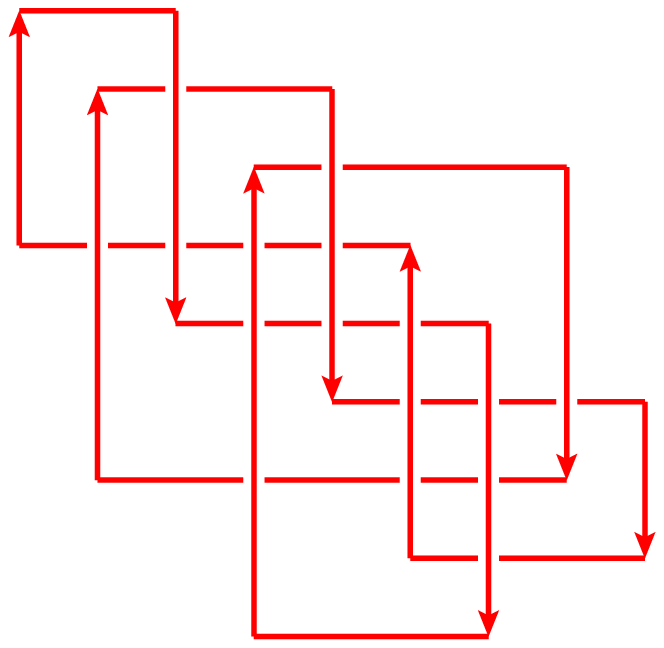} &
\rb{$2\,3\,3\,2\,\overline{1}\,\overline{2}\,\overline{2}\,\overline{3}\,2\,1\,1$}
&&&
\rb{$\Aug(\HThat_0,\Z/3,2,1)=0$} \\
\hline
\multirow{2}{*}{$10_{128}$}
& \includegraphics[height=0.5in]{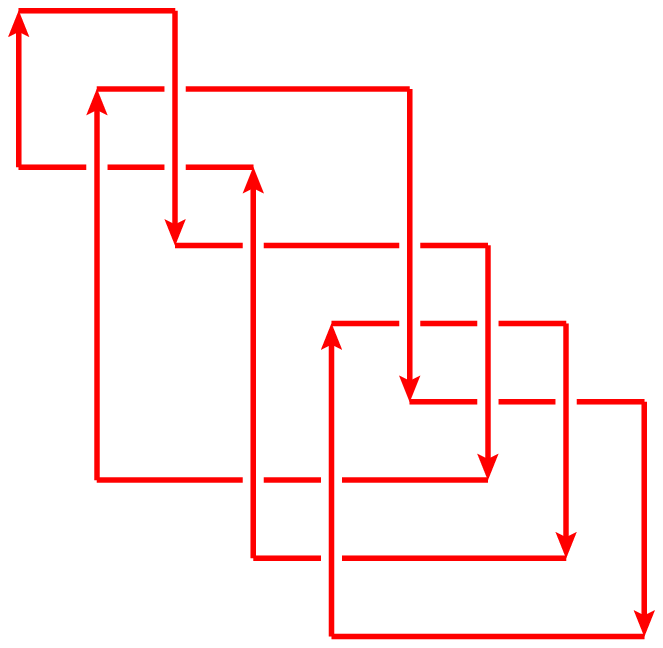} &
\rb{$1\,2\,1\,2\,1\,2\,1\,3\,3\,2\,\overline{3}$}
&
\multirow{2}{*}{?} & \multirow{2}{*}{?} &
\rb{---} \\
\cline{2-3} \cline{6-6}
& \raisebox{0.25in}{\includegraphics[height=0.5in,angle=180]{10_128b.eps}} &
(mirror of previous braid)
&&&
---  \\
\hline
\end{tabular}
\end{center}

\begin{center}
\begin{tabular}{|c||c|c|c|c|c|}
\hline
Knot & Grid & Braid & $\mathit{HFK}$? & $\mathit{HT}$? & $\mathit{HT}$ computation \\
\hline
\multirow{2}{*}{$m(10_{132})$}
& \includegraphics[height=0.5in]{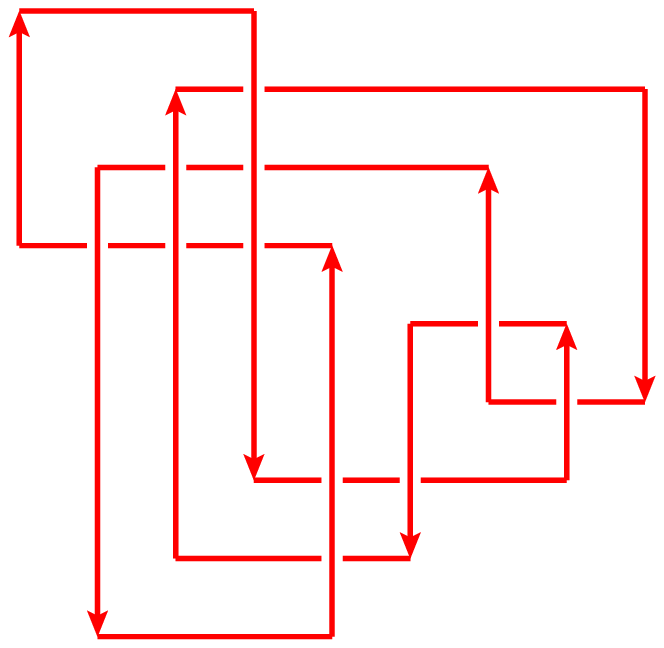} &
\rb{$3\,\overline{2}\,\overline{2}\,3\,3\,2\,\overline{3}\,\overline{1}\,2\,1\,1$}
&
$\checkmark$ & \multirow{2}{*}{$\checkmark$} &
\rb{$\Aug(\HThat_0,\Z/3,1,1)=0$} \\
\cline{2-3} \cline{6-6}
& \includegraphics[height=0.5in]{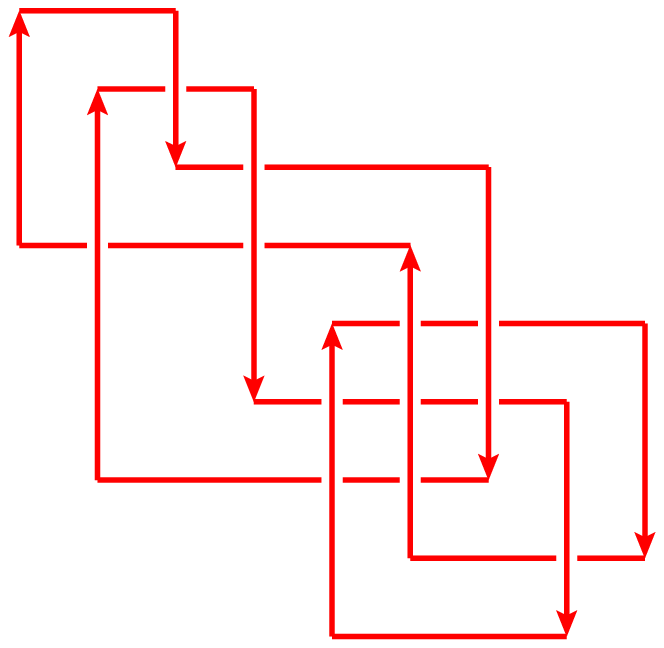} &
\rb{$3\,\overline{2}\,\overline{2}\,3\,3\,2\,\overline{3}\,1\,1\,2\,\overline{1}$}
&\raisebox{0.3in}{\cite{NOT}} &&
\rb{$\Aug(\HThat_0,\Z/3,1,1)=1$} \\
\hline
\multirow{2}{*}{$10_{136}$}
& \includegraphics[height=0.5in]{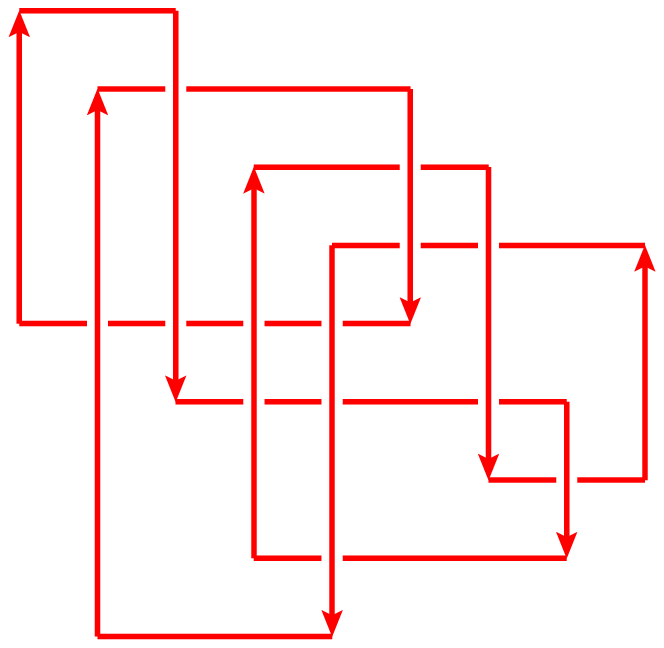} &
\rb{$\overline{1}\,2\,\overline{1}\,2\,3\,3\,\overline{2}\,1\,\overline{2}\,\overline{3}\,2$}
&
\multirow{2}{*}{\ding{55}} & \multirow{2}{*}{$\checkmark$} &
\rb{$\Aug(\HThat_0,\Z/3,2,1)=5$} \\
\cline{2-3} \cline{6-6}
& \includegraphics[height=0.5in]{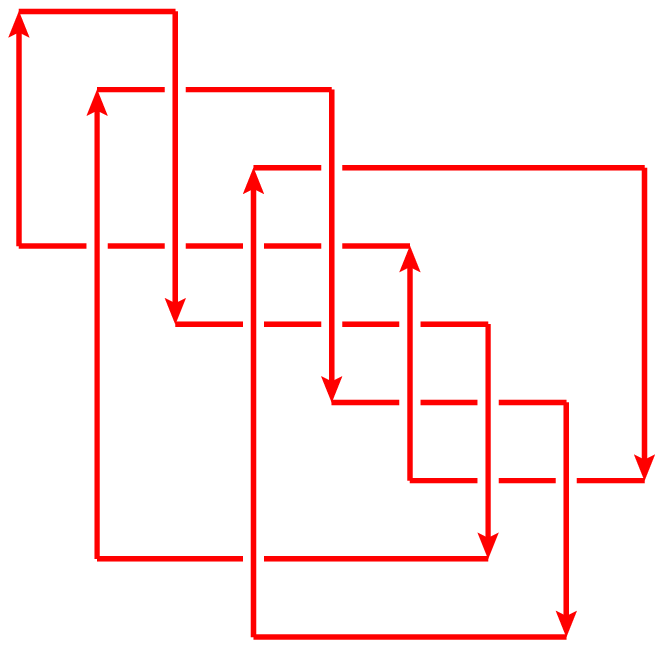} &
\rb{$\overline{2}\,3\,\overline{2}\,\overline{1}\,\overline{2}\,3\,\overline{2}\,1\,1\,1\,3$}
&&&
\rb{$\Aug(\HThat_0,\Z/3,2,1)=0$} \\
\hline
\multirow{2}{*}{$m(10_{140})$}
& \includegraphics[height=0.5in]{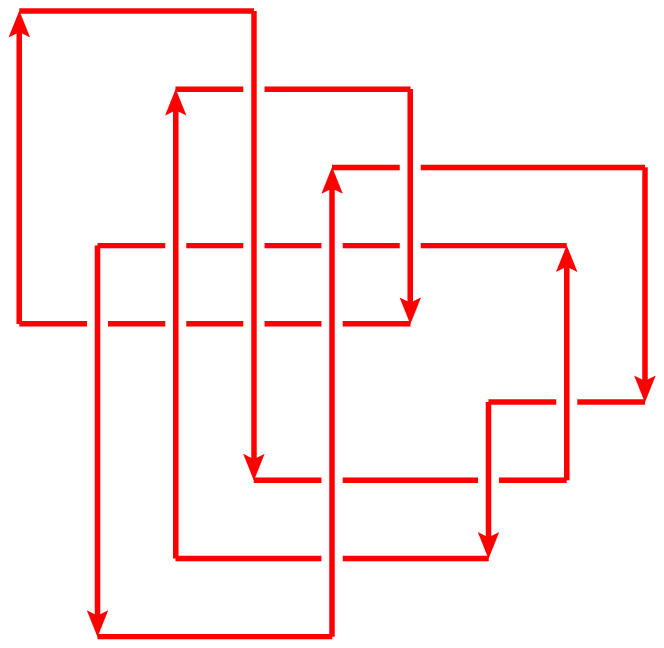} &
\rb{$1\,1\,\overline{2}\,1\,2\,\overline{1}\,\overline{1}\,\overline{3}\,2\,3\,3$}
&
$\checkmark$ & \multirow{2}{*}{$\checkmark$} &
\rb{$\Aug(\HThat_0,\Z/3,2,1)=1$} \\
\cline{2-3} \cline{6-6}
& \includegraphics[height=0.5in]{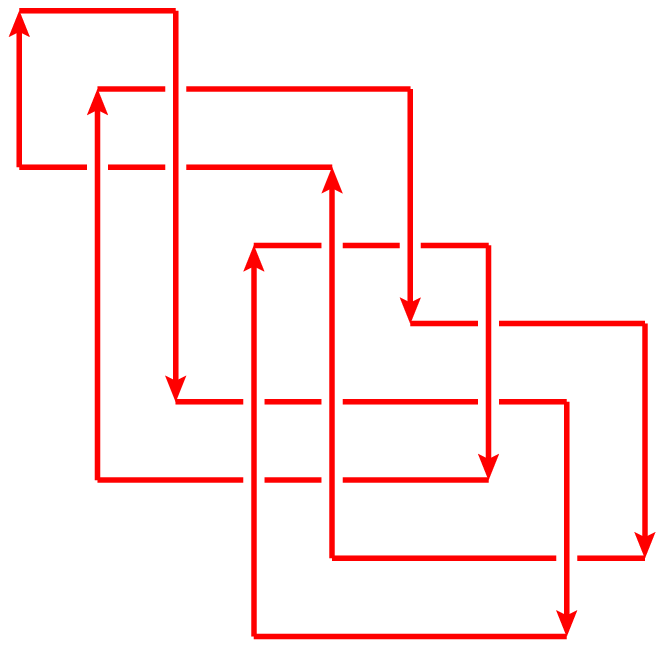} &
\rb{$1\,1\,\overline{2}\,1\,2\,\overline{1}\,\overline{1}\,3\,3\,2\,\overline{3}$}
& \raisebox{0.3in}{\cite{NOT}} &&
\rb{$\Aug(\HThat_0,\Z/3,2,1)=2$} \\
\hline
\multirow{2}{*}{$m(10_{145})$}
& \includegraphics[height=0.5in]{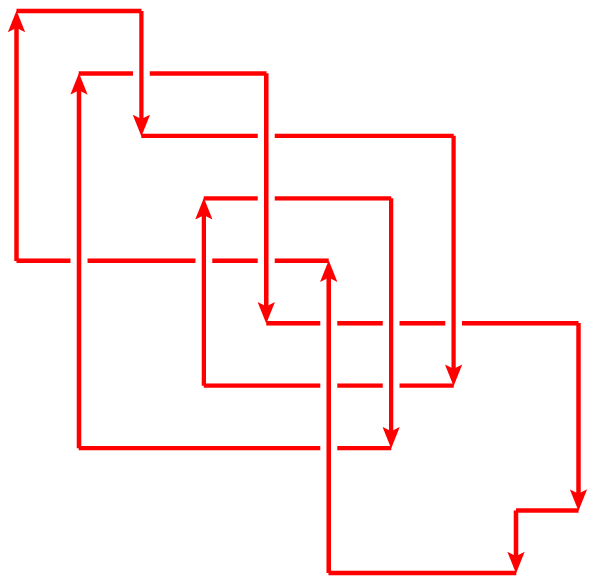} &
\rb{$\overline{2}\,3\,3\,2\,\overline{1}\,2\,1\,3\,2\,2\,1\,\overline{4}$}
&
\multirow{2}{*}{$\checkmark$ \cite{Atlas}} & \multirow{2}{*}{$\checkmark$} &
\rb{$\Aug(\HThat_0,\Z/3,1,1)=0$} \\
\cline{2-3} \cline{6-6}
& \includegraphics[height=0.5in]{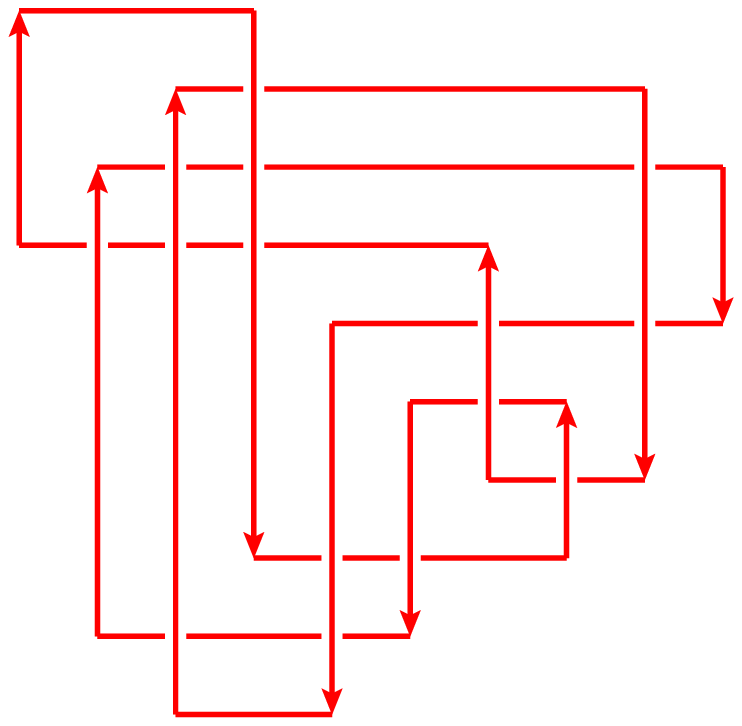} &
\rb{$3\,2\,1\,\overline{3}\,\overline{4}\,\overline{2}\,\overline{3}\,1\,2\,2\,1\,3\,4\,4$}
&&&
\rb{$\Aug(\HThat_0,\Z/3,1,1)=1$} \\
\hline
\multirow{2}{*}{$10_{160}$}
& \includegraphics[height=0.5in]{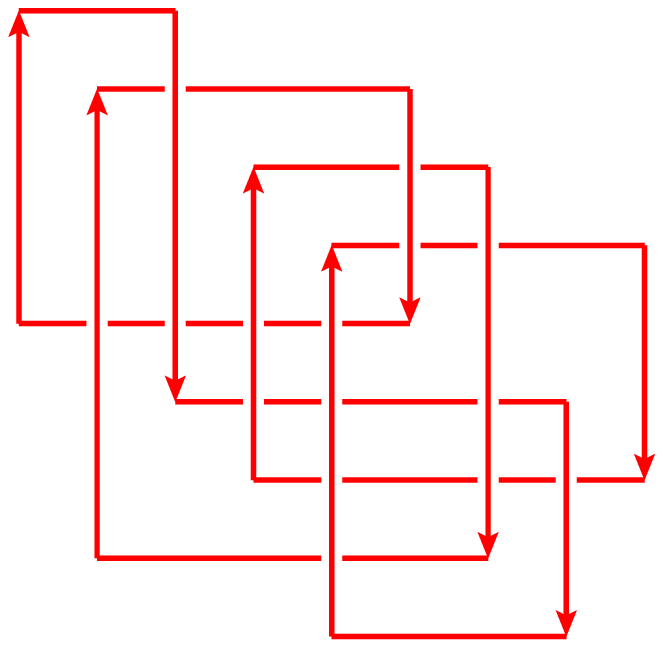} &
\rb{$\overline{2}\,3\,\overline{2}\,\overline{1}\,3\,2\,3\,2\,3\,1\,1$}
&
\multirow{2}{*}{\ding{55}} & \multirow{2}{*}{?} &
\rb{---} \\
\cline{2-3} \cline{6-6}
& \raisebox{0.25in}{\includegraphics[height=0.5in,angle=180]{10_160b.eps}} &
(mirror of previous braid)
&&&
---  \\
\hline
\multirow{2}{*}{$m(10_{161})$}
& \includegraphics[height=0.5in]{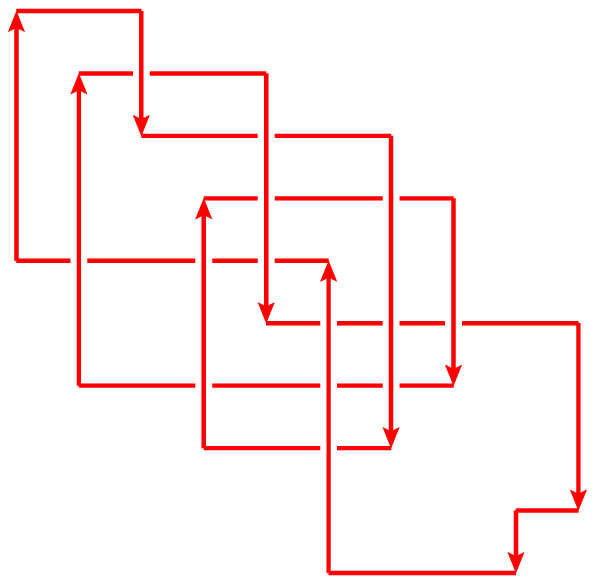} &
\rb{$\overline{1}\,2\,1\,1\,1\,2\,2\,1\,1\,2\,\overline{3}$}
&
\multirow{2}{*}{$\checkmark$ \cite{Atlas}} & \multirow{2}{*}{$\checkmark$} &
\rb{$\Aug(\HThat_0,\Z/3,1,1)=0$} \\
\cline{2-3} \cline{6-6}
& \includegraphics[height=0.5in]{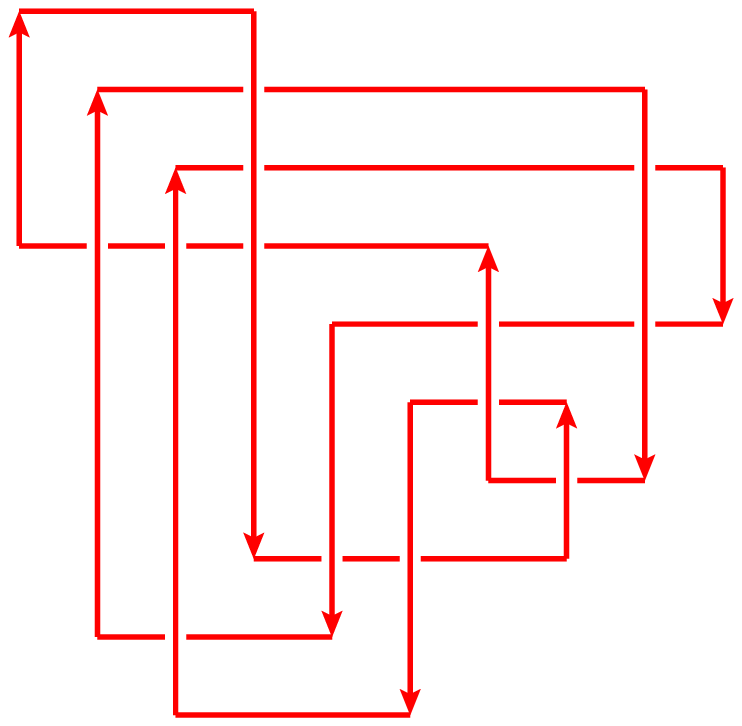} &
\rb{$2\,\overline{1}\,2\,2\,1\,3\,3\,2\,2\,2\,\overline{1}\,2\,\overline{3}$}
&&&
\rb{$\Aug(\HThat_0,\Z/3,1,1)=1$} \\
\hline
\multirow{2}{*}{$12n_{591}$}
& \includegraphics[height=0.5in]{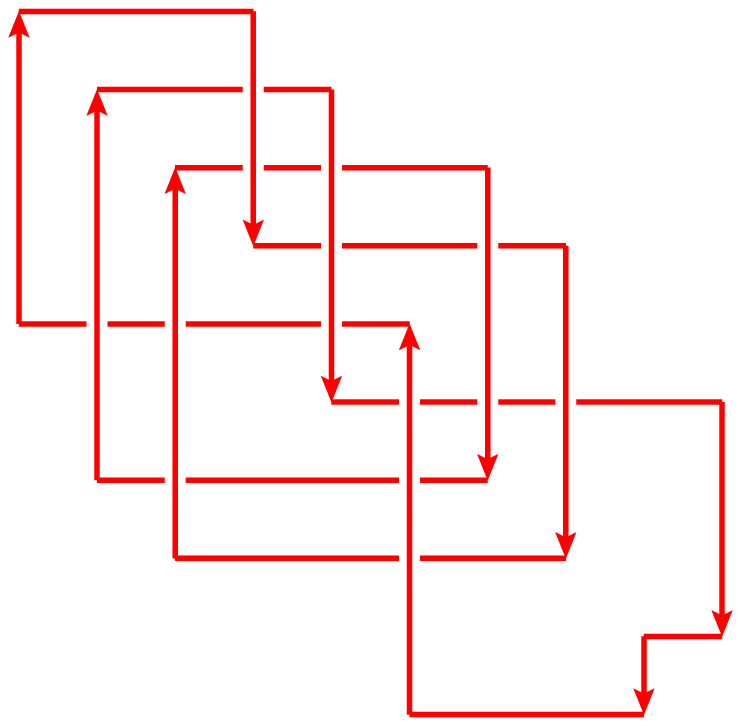} &
\rb{$3\,2\,3\,2\,\overline{1}\,3\,2\,1\,3\,2\,1\,2\,1\,\overline{4}$}
&
\multirow{2}{*}{$\checkmark$ \cite{Atlas}} & \multirow{2}{*}{$\checkmark$} &
\rb{$\Aug(\HThat_0,\Z/3,1,1)=0$} \\
\cline{2-3} \cline{6-6}
& \includegraphics[height=0.5in]{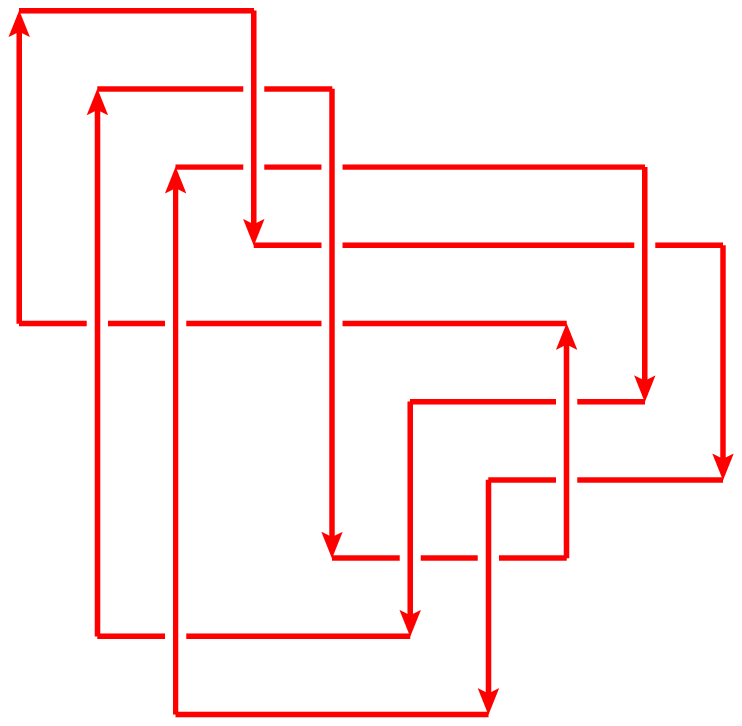} &
\rb{$\overline{2}\,\overline{3}\,\overline{1}\,\overline{2}\,4\,3\,4\,3\,2\,1\,2\,1\,2\,1\,4\,3\,4\,3$}
&&&
\rb{$\Aug(\HThat_0,\Z/3,1,1)=1$} \\
\hline
\end{tabular}
\end{center}

\newpage
\bibliographystyle{alpha}

\begin{thebibliography}{LOSS09}

\bibitem[BM08]{BM3braid}
Joan~S. Birman and William~W. Menasco.
\newblock A note on closed 3-braids.
\newblock {\em Commun. Contemp. Math.}, 10(suppl. 1):1033--1047, 2008.
\newblock math/0703669.

\bibitem[Che02]{Ch02}
Yuri Chekanov.
\newblock Differential algebra of {L}egendrian links.
\newblock {\em Invent. Math.}, 150(3):441--483, 2002.

\bibitem[CN]{Atlas}
Wutichai Chongchitmate and Lenhard Ng.
\newblock An atlas for {L}egendrian knots.
\newblock arXiv:1010.3997.

\bibitem[EENSa]{EENStransverse}
Tobias Ekholm, John Etnyre, Lenhard Ng, and Michael Sullivan.
\newblock Filtrations on the knot contact homology of transverse knots.
\newblock arXiv:1010.0450.

\bibitem[EENSb]{EENS}
Tobias Ekholm, John Etnyre, Lenhard Ng, and Michael Sullivan.
\newblock Knot contact homology.
\newblock In preparation.

\bibitem[ENS02]{ENS}
John~B. Etnyre, Lenhard~L. Ng, and Joshua~M. Sabloff.
\newblock Invariants of {L}egendrian knots and coherent orientations.
\newblock {\em J. Symplectic Geom.}, 1(2):321--367, 2002.
\newblock math.SG/0101145.

\bibitem[KN10]{KhN}
Tirasan Khandhawit and Lenhard Ng.
\newblock A family of transversely nonsimple knots.
\newblock {\em Algebr. Geom. Topol.}, 10(1):293--314 (electronic), 2010.
\newblock arXiv:0806.1887.

\bibitem[LOSS09]{LOSS}
Paolo Lisca, Peter Ozsv{\'a}th, Andr{\'a}s~I. Stipsicz, and Zolt{\'a}n
  Szab{\'o}.
\newblock Heegaard {F}loer invariants of {L}egendrian knots in contact
  three-manifolds.
\newblock {\em J. Eur. Math. Soc. (JEMS)}, 11(6):1307--1363, 2009.
\newblock arXiv:0802.0628.

\bibitem[Ng03]{NgCLI}
Lenhard~L. Ng.
\newblock Computable {L}egendrian invariants.
\newblock {\em Topology}, 42(1):55--82, 2003.
\newblock math.GT/0002250.

\bibitem[Ng05a]{NgKCH1}
Lenhard Ng.
\newblock Knot and braid invariants from contact homology. {I}.
\newblock {\em Geom. Topol.}, 9:247--297 (electronic), 2005.
\newblock math.GT/0302099.

\bibitem[Ng05b]{NgKCH2}
Lenhard Ng.
\newblock Knot and braid invariants from contact homology. {II}.
\newblock {\em Geom. Topol.}, 9:1603--1637 (electronic), 2005.
\newblock With an appendix by the author and Siddhartha Gadgil;
  math.GT/0303343.

\bibitem[Ng08]{Ngframed}
Lenhard Ng.
\newblock Framed knot contact homology.
\newblock {\em Duke Math. J.}, 141(2):365--406, 2008.
\newblock math/0407071.

\bibitem[NOT08]{NOT}
Lenhard Ng, Peter Ozsv{\'a}th, and Dylan Thurston.
\newblock Transverse knots distinguished by knot {F}loer homology.
\newblock {\em J. Symplectic Geom.}, 6(4):461--490, 2008.
\newblock math.GT/0703446.

\bibitem[NT09]{NgThurston}
Lenhard Ng and Dylan Thurston.
\newblock Grid diagrams, braids, and contact geometry.
\newblock In {\em Proceedings of {G}\"okova {G}eometry-{T}opology {C}onference
  2008}, pages 120--136. G\"okova Geometry/Topology Conference (GGT), G\"okova,
  2009.
\newblock arXiv:0812.3665.

\bibitem[OS03]{OSh}
S.~Yu. Orevkov and V.~V. Shevchishin.
\newblock Markov theorem for transversal links.
\newblock {\em J. Knot Theory Ramifications}, 12(7):905--913, 2003.
\newblock math.GT/0112207.

\bibitem[OS10]{OStipsicz}
Peter Ozsv{\'a}th and Andr{\'a}s~I. Stipsicz.
\newblock Contact surgeries and the transverse invariant in knot {F}loer
  homology.
\newblock {\em J. Inst. Math. Jussieu}, 9(3):601--632, 2010.
\newblock arXiv:0803.1252.

\bibitem[OST08]{OST}
Peter Ozsv{\'a}th, Zolt{\'a}n Szab{\'o}, and Dylan Thurston.
\newblock Legendrian knots, transverse knots and combinatorial {F}loer
  homology.
\newblock {\em Geom. Topol.}, 12(2):941--980, 2008.
\newblock math/0611841.

\bibitem[Pla06]{PlamKh}
Olga Plamenevskaya.
\newblock Transverse knots and {K}hovanov homology.
\newblock {\em Math. Res. Lett.}, 13(4):571--586, 2006.
\newblock math.GT/0412184.

\bibitem[Wri]{Wrinkle}
Nancy~C. Wrinkle.
\newblock The {M}arkov {T}heorem for transverse knots.
\newblock math.GT/0202055.

\bibitem[Wu08]{WuKR}
Hao Wu.
\newblock Braids, transversal links and the {K}hovanov-{R}ozansky theory.
\newblock {\em Trans. Amer. Math. Soc.}, 360(7):3365--3389, 2008.
\newblock math.GT/0508064.

\end{thebibliography}

\def\cprime{$'$}

\end{document}